\documentclass[3p,times]{elsarticle}

\usepackage[utf8]{inputenc}
\usepackage[T1]{fontenc}
\usepackage[english]{babel}
\usepackage{amsmath,amssymb,amsfonts,amsthm,mathtools}
\usepackage{bm}
\usepackage{mathrsfs}
\usepackage{enumitem}
\usepackage{graphicx}
\usepackage{booktabs}
\usepackage{placeins}
\usepackage{microtype}
\usepackage{siunitx}
\usepackage{pgfplots}
\usetikzlibrary{calc,arrows.meta}
\pgfplotsset{compat=1.18}
\usepackage[colorlinks=true,linkcolor=blue,citecolor=blue,urlcolor=blue]{hyperref}
\hypersetup{
  pdftitle={Exact discrete-adjoint optimization of trap timing and placement in a Stieltjes-time reaction-diffusion model: A Galicia case study},
  pdfauthor={Francisco J. Fernández; Iván Area},
  pdfsubject={Fully discrete Stieltjes-time optimal control and Galicia case study},
  pdfkeywords={Stieltjes time, discrete adjoint, reaction-diffusion model, optimal control, invasive species, finite elements, trap placement}
}
\journal{Applied Mathematical Modelling}
\biboptions{sort&compress}
\numberwithin{equation}{section}

\newtheorem{theorem}{Theorem}[section]
\newtheorem{proposition}[theorem]{Proposition}
\newtheorem{lemma}[theorem]{Lemma}
\newtheorem{assumption}[theorem]{Assumption}
\newtheorem{remark}[theorem]{Remark}
\newtheorem{definition}[theorem]{Definition}

\newcommand{\R}{\mathbb{R}}
\newcommand{\N}{\mathbb{N}}
\newcommand{\Uad}{\mathcal U_{\rm ad}}

\newcommand{\calF}{\mathcal F}
\newcommand{\calJ}{\mathcal J}
\newcommand{\calA}{\mathcal A}
\newcommand{\dd}{\,\mathrm d}

\newcommand{\Dg}{\mathrm D_g}
\newcommand{\one}{\mathbf 1}

\begin{document}
\begin{frontmatter}

\title{Exact discrete-adjoint optimization of trap timing and placement in a Stieltjes-time reaction--diffusion model: A Galicia case study}

\author[usc]{Francisco J. Fern\'andez\corref{cor1}}
\ead{fjavier.fernandez@usc.es}
\cortext[cor1]{Corresponding author.}

\author[uvigo]{Iv\'an Area}
\ead{area@uvigo.gal}

\affiliation[usc]{organization={CITMAga (Centro de Investigaci\'on e Tecnolox\'ia Matem\'atica de Galicia), Departamento de Estad\'istica, An\'alisis Matem\'atico y Optimizaci\'on, Facultade de Matem\'aticas, Universidade de Santiago de Compostela},
 city={Santiago de Compostela}, postcode={15782}, country={Spain}}
\affiliation[uvigo]{organization={IFCAE (Instituto de Investigaci\'on en F\'isica, Computaci\'on e Ciencia Aeroespacial), Departamento de Matem\'atica Aplicada II, Universidade de Vigo},
 addressline={Campus As Lagoas}, city={Ourense}, postcode={E-32004}, country={Spain}}

\begin{abstract}
Seasonal population models must often combine diffusion, inactive periods and abrupt biological transfers while remaining differentiable with respect to localized interventions. We formulate a finite-dimensional timing-and-placement control problem for a Stieltjes-time reaction--diffusion model and derive an exact adjoint of the fully discrete residual. Continuous finite elements and an implicit Stieltjes--Euler scheme encode propagation, compartment replacement, resets and historical phase averages. For the resulting box-constrained penalized problem, we establish well-posedness and differentiability of the discrete control-to-state map, existence of a minimizer, first-order stationarity conditions, and equivalence of direct-sensitivity and adjoint gradients. Independent benchmark tests give relative discrepancies at the level of machine precision (below $3\times10^{-15}$) and measured adjoint speed-ups from about $7.5$ to $30.9$ as the number of traps increases from one to four. The method is then applied to trap campaigns for \textit{Vespa velutina} on a realistic Galicia mesh. Smoothed activation windows, domain-normalized moving kernels and the complete adjoint are verified independently before optimization. The final two-cluster control has a scaled gradient infinity norm of $2.97\times10^{-5}$ at 120 temporal subdivisions per month and remains admissible. Fixed-control evaluations at 60, 120 and 240 subdivisions yield a refinement-increment ratio of $0.4984$. At the finest level, the joint control outperforms time-only and reference controls and reduces the diagnostic objective by $0.49209\%$. This absolute reduction is conditional on the uncalibrated trap-to-mortality intensity constants and must not be interpreted as field capture efficacy. The study provides a reproducible discrete-adjoint framework for seasonal intervention models, while not claiming global optimality or a calibrated field-management prescription.
\end{abstract}

\begin{keyword}
Stieltjes time \sep discrete adjoint \sep reaction--diffusion model \sep optimal control \sep invasive species \sep finite elements \sep trap placement
\end{keyword}

\end{frontmatter}
\section{Introduction}

The yellow-legged hornet, \textit{Vespa velutina}, is an invasive species whose expansion has become a significant ecological and economic problem in several European regions. Mathematical models can be useful to describe its spatial spread, to incorporate seasonal biological mechanisms and to compare possible intervention strategies.

A Stieltjes-time reaction--diffusion model was previously proposed to incorporate continuous diffusion periods, inactive periods and impulsive biological events, such as emergence, hibernation, production of workers and production of future foundress queens~\cite{AreaVelutina2025}. The purpose of the present work is to build on that model and study a control problem associated with the timing and location of traps. The mathematical setting is related to the Stieltjes differential calculus developed for systems with continuous, discrete and impulsive dynamics~\cite{PousoRodriguez2015,FrigonLopezPouso2017}. For completeness, Section~\ref{sec:stieltjes-preliminaries} develops the required derivator, measure, pointwise and weak-derivative, and function-space notions from published sources, and explains how flat and atomic parts of the intrinsic clocks represent dormancy and abrupt biological transitions.

The key modelling assumption is that each trap produces a localized increase of the mortality coefficient. If the number of traps, their intensity and their spatial and temporal radii are fixed, the control variable consists only of the activation times and spatial positions of the traps. This leads to a finite-dimensional optimal control problem constrained by a Stieltjes-time parabolic model. Equivalently, the present control can be viewed as a finite-dimensional subfamily of bilinear removal controls, where the removal rate is not arbitrary but generated by localized trapping kernels. Related reaction--diffusion control models for invasive species motivate the intervention viewpoint~\cite{Bonneau2017}, while the residual, sensitivity, and adjoint constructions follow standard finite-dimensional and PDE-constrained optimization principles~\cite{BonnansShapiro2000,HinzeEtAl2009,NocedalWright2006}.

\subsection*{Main contributions of the paper}
The contributions of the paper can be summarized as follows. \emph{(C1) Stieltjes modelling and analysis:} we provide the derivator, Lebesgue--Stieltjes measure, absolute-continuity, Stieltjes--Bochner, weak-evolution, and biological-transfer framework required by the seasonal reaction--diffusion model. \emph{(C2) Fully discrete optimal control:} we formulate the finite-dimensional timing--placement problem, write the finite-element/Stieltjes--Euler state equation as an algebraic residual, prove existence for the fully discrete problem, and state its box-constrained first-order conditions. \emph{(C3) Gradient construction and academic verification:} we derive linearized-state and exact discrete-adjoint gradients, validate them independently, quantify the adjoint speed-up, and study an academic problem whose multi-start results are consistent with nonconvex behaviour through 32 IPOPT runs. \emph{(C4) Galicia state evaluation and numerical screening:} we define explicit observation operators and baselines for a held-out temporal comparison, and use positivity, mesh, time, radius, allocation, activation-window, and compartment-mask tests to freeze a defensible field-domain discretization without modifying the original biological transfers. \emph{(C5) Galicia adjoint optimization and refinement:} we validate the exact adjoint of the lumped saturating cluster residual, perform local timing--placement optimization on L1 and L2, verify small scaled projected residuals, and document a first-order temporal refinement pattern for the final fixed control through $60$, $120$, and $240$ subdivisions per month.

\subsection*{Scope of the mathematical claims}
To prevent the modelling and computational levels from being conflated, Table~\ref{tab:claim-scope} records the status of the principal statements.
\begin{table}[htbp]
\centering
\small
\caption{Scope of the analytical and numerical claims.}
\label{tab:claim-scope}
\begin{tabular}{p{0.22\linewidth}p{0.34\linewidth}p{0.35\linewidth}}
\toprule
Level & Established in this paper & Not claimed \\
\midrule
Continuous Stieltjes model & Consistent weak formulation and atomic resolvent identities & Well-posedness or differentiability of the continuous control-to-state map \\
Fully discrete state & Existence, uniqueness, and $C^1$ dependence on the finite-dimensional control & Convergence of the discrete state to a continuous solution \\
Discrete optimization & Existence for the box-constrained penalized problem; necessary box-stationarity conditions; exact discrete adjoint identity & Convexity, uniqueness, or global optimality \\
Numerical refinement & Independent derivative checks and observed stability across the tested meshes and time grids & A theorem on mesh/time convergence of optimizers or objective values \\
Ecological interpretation & Reproducible scenario comparison for the stated parameters & Calibrated trap efficacy or an operational deployment recommendation \\
\bottomrule
\end{tabular}
\end{table}

\subsection*{Notation}
The symbols $g_i$ denote the compartment-specific derivators and $\mu_{g_i}$ their Lebesgue--Stieltjes measures. Natural mortality rates are denoted by $\mu_i$ and must not be confused with $\mu_{g_i}$. The continuous state is $Y=(y_1,y_2,y_3)$, the fully discrete state vector is $C$, and the control is denoted by $u$ in physical coordinates and by $\zeta$ after nondimensionalization. The reduced objectives are $j$ in the academic problem and $J_{\rm bio}$ in the Galicia study. In the numerical sections, L1 and L2 denote the exploratory and refined Galicia mesh levels, respectively; they are mesh labels and not Lebesgue spaces.

\section{Stieltjes differential calculus and functional setting}\label{sec:stieltjes-preliminaries}

This section collects the Stieltjes notions used later in the continuous model and fixes the conventions needed to interpret the discrete scheme. The presentation follows the differential--integral framework developed in~\cite{PousoRodriguez2015,FrigonLopezPouso2017}, its vector-valued and parabolic extension in~\cite{FernandezTojo2020Bochner}, and the more recent analysis of Stieltjes function spaces and pointwise differentiability in~\cite{FernandezTojoVillanueva2024,FernandezMarquezTojoVillanueva2025}. The objective is self-containment rather than a new development of Stieltjes calculus.

\subsection{Derivators and Lebesgue--Stieltjes measures}

\begin{definition}[Derivator]\label{def:derivator}
Let $I=[T_0,T_F]$. A \emph{derivator} is a nondecreasing, left-continuous map
\[
 g:I\longrightarrow\R.
\]
Unless stated otherwise, we assume that $g(T_F)>g(T_0)$, so that the associated clock is nontrivial.
Its right jump at $t\in[T_0,T_F)$ is
\[
 \Delta^+g(t):=g(t^+)-g(t).
\]
\end{definition}
This convention, introduced for Stieltjes differential equations in~\cite{PousoRodriguez2015,FrigonLopezPouso2017}, makes chronological time only one possible clock: the classical case is $g(t)=t$, while a more general $g$ measures the effective time experienced by the process.

Every derivator induces a finite positive Lebesgue--Stieltjes measure $\mu_g$ on $I$. With the left-continuous convention used here,
\begin{equation}\label{eq:stieltjes-measure-convention}
 \mu_g([c,d))=g(d)-g(c),
 \qquad T_0\le c<d\le T_F,
\end{equation}
and
\begin{equation}\label{eq:stieltjes-atom}
 \mu_g(\{t\})=\Delta^+g(t).
\end{equation}
Lebesgue--Stieltjes integration will be written either as $\int f\,\dd\mu_g$ or, when no ambiguity is possible, as $\int f\,\dd g$. Standard measure-theoretic constructions and the interval convention in~\eqref{eq:stieltjes-measure-convention} can be found in~\cite{CarterVanBrunt2000,PousoRodriguez2015}.

The derivators used in the benchmark have an absolutely continuous part and finitely many atoms. They can therefore be written as
\begin{equation}\label{eq:benchmark-derivator-decomposition}
 g(t)=g(T_0)+\int_{T_0}^{t}a_g(s)\,\dd s
 +\sum_{\{k:\,t_k<t\}}\delta_k,
 \qquad a_g\ge0,\quad \delta_k>0,
\end{equation}
so that
\begin{equation}\label{eq:benchmark-measure-decomposition}
 \dd\mu_g(t)=a_g(t)\,\dd t+\sum_k\delta_k\,\delta_{t_k}(\dd t).
\end{equation}
The general Lebesgue--Stieltjes measure may also have a singular-continuous part, but that component is not used in the biological calendar considered below.

\subsection{Notable sets associated with a derivator}

The geometry of $g$ is described by two primary sets~\cite{PousoRodriguez2015,FrigonLopezPouso2017,FernandezMarquezTojoVillanueva2025}:
\begin{align}
 D_g&:=\{t\in[T_0,T_F):\Delta^+g(t)>0\},\label{eq:Dg-definition}\\
 C_g&:=\bigl\{t\in I:\text{$g$ is constant on $(t-\varepsilon,t+\varepsilon)\cap I$
 for some $\varepsilon>0$}\bigr\}.\label{eq:Cg-definition}
\end{align}
The set $D_g$ is at most countable and is exactly the atomic support of $\mu_g$. The set $C_g$ is relatively open. Writing its pairwise disjoint connected components as
\begin{equation}\label{eq:Cg-components}
 C_g=\bigcup_{n\in\Lambda_g}(a_n,b_n),
\end{equation}
one also introduces the endpoint sets
\begin{equation}\label{eq:Ng-definition}
 N_g^-:=\{a_n:n\in\Lambda_g\}\setminus D_g,
 \qquad
 N_g^+:=\{b_n:n\in\Lambda_g\}\setminus D_g,
 \qquad
 N_g:=N_g^-\cup N_g^+.
\end{equation}
These endpoints are relevant in pointwise differentiability and compactness arguments. Moreover,
\[
 C_g\cap D_g=\varnothing,
 \qquad
 \mu_g(C_g\cup N_g)=0.
\]
Thus, the differential equation formulated $\mu_g$-almost everywhere does not depend on how the derivative is represented on the interiors or non-atomic endpoints of flat components.

\subsection{Stieltjes derivative}

Let $X$ be a Banach space and $u:I\to X$. At a point $t\notin C_g$, the Stieltjes derivative, or $g$-derivative, is defined, whenever the relevant limit exists, by~\cite{PousoRodriguez2015,FrigonLopezPouso2017,FernandezTojo2020Bochner}
\begin{equation}\label{eq:pointwise-g-derivative}
 u'_g(t)=
 \begin{cases}
 \displaystyle\lim_{s\to t,\,g(s)\ne g(t)}
 \frac{u(s)-u(t)}{g(s)-g(t)},
 &t\notin D_g\cup C_g,\\[2.2ex]
 \displaystyle\lim_{s\to t^+}
 \frac{u(s)-u(t)}{g(s)-g(t)},
 &t\in D_g.
 \end{cases}
\end{equation}
At an atom, existence of the right limit gives the exact identity
\begin{equation}\label{eq:g-derivative-at-jump}
 u'_g(t)=\frac{u(t^+)-u(t)}{\Delta^+g(t)},
 \qquad t\in D_g.
\end{equation}
If the usual derivatives $u'(t)$ and $g'(t)>0$ exist, then
\begin{equation}\label{eq:g-derivative-ratio}
 u'_g(t)=\frac{u'(t)}{g'(t)}.
\end{equation}
Consequently, $u'_g$ is a rate of change relative to the intrinsic clock $g$, rather than necessarily relative to chronological time.

The quotient in~\eqref{eq:pointwise-g-derivative} is indeterminate on $C_g$. For completeness, the extended pointwise convention studied in~\cite{FernandezMarquezTojoVillanueva2025} associates with $t\in(a_n,b_n)\subset C_g$ the point $t^*=b_n$ and defines
\begin{equation}\label{eq:extended-g-derivative-flat}
 u'_g(t):=\lim_{s\to (t^*)^+}
 \frac{u(s)-u(t^*)}{g(s)-g(t^*)},
\end{equation}
whenever the limit is meaningful and exists. The present PDE model does not rely on this extension: its weak derivative is defined through an integral representation and only $\mu_g$-almost everywhere. This distinction is important because pointwise $g$-differentiability alone has a nontrivial kernel and does not, without an appropriate function-space condition, guarantee the classical uniqueness property ``zero derivative implies constant''~\cite{FernandezMarquezTojoVillanueva2025}.

\subsection{\texorpdfstring{$g$}{g}-continuity, absolute continuity and the fundamental theorem}

The derivator induces the pseudometric
\[
 d_g(s,t):=|g(s)-g(t)|
\]
and its associated topology $\tau_g$. A function $u:I\to X$ is \emph{$g$-continuous} if it is continuous from $(I,\tau_g)$ to $X$; equivalently, for every $t\in I$ and $\varepsilon>0$ there exists $\delta>0$ such that
\[
 |g(s)-g(t)|<\delta\quad\Longrightarrow\quad
 \|u(s)-u(t)\|_X<\varepsilon.
\]
We denote by $BC_g(I;X)$ the Banach space of bounded $g$-continuous maps equipped with the supremum norm. These notions and their compactness properties are developed in~\cite{FrigonLopezPouso2017,FernandezTojoVillanueva2024}.

Assume that $X$ has the Radon--Nikod\'ym property (in particular, $X$ may be reflexive). A map $u:I\to X$ is $g$-absolutely continuous, written $u\in\mathcal{AC}_g(I;X)$, when it admits the equivalent integral representation supplied by the Stieltjes fundamental theorem of calculus~\cite{PousoRodriguez2015,FernandezTojo2020Bochner}:
\begin{equation}\label{eq:g-ac-definition}
 u(t)=u(T_0)+\int_{[T_0,t)}v(s)\,\dd\mu_g(s),
 \qquad t\in I,
\end{equation}
for some $v\in L_g^1(I;X)$. The function $v$ is unique $\mu_g$-almost everywhere and is denoted by $\Dg u$. Conversely, $\Dg u$ exists $\mu_g$-almost everywhere, belongs to $L_g^1(I;X)$, and reconstructs $u$ through~\eqref{eq:g-ac-definition}. In particular, the canonical representative satisfies
\begin{equation}\label{eq:g-ac-jump-identity}
 u(t^+)-u(t)=\Dg u(t)\,\Delta^+g(t),
 \qquad t\in D_g.
\end{equation}
For a derivator of the form~\eqref{eq:benchmark-derivator-decomposition}, the integral splits as
\begin{equation}\label{eq:stieltjes-integral-split}
 \int_{[T_0,t)}v\,\dd\mu_g
 =\int_{T_0}^{t}v(s)a_g(s)\,\dd s
 +\sum_{\{k:t_k<t\}}v(t_k)\delta_k.
\end{equation}
Equation~\eqref{eq:stieltjes-integral-split} displays explicitly how continuous evolution and prescribed impulses coexist in one integral equation.

\subsection{Stieltjes--Bochner and energy spaces}

For a Banach space $X$ and $1\le p\le\infty$, let
\begin{equation}\label{eq:Lpg-definition}
 L_g^p(I;X):=L^p(I,\mu_g;X)
\end{equation}
be the Bochner space of strongly $\mu_g$-measurable maps, with the usual modification when $p=\infty$. Following~\cite{FernandezTojo2020Bochner}, for a Gelfand triple $V\hookrightarrow H\equiv H'\hookrightarrow V'$ define
\begin{equation}\label{eq:Wgpq-definition}
 W_g^{1,p,q}(I;V,V')
 :=\left\{u\in L_g^p(I;V):
 \begin{array}{l}
 \text{there exists }z\in L_g^q(I;V')\text{ such that}\\[-0.2em]
 u(t)=u(T_0)+\displaystyle\int_{[T_0,t)}z(s)\,\dd\mu_g(s)
 \text{ in }V',\ \forall t\in I
 \end{array}\right\}.
\end{equation}
We write $\Dg u=z$ and use the graph norm
\begin{equation}\label{eq:Wgpq-norm}
 \|u\|_{W_g^{1,p,q}}
 :=\|u\|_{L_g^p(I;V)}+\|\Dg u\|_{L_g^q(I;V')}.
\end{equation}
The natural parabolic energy space used in the modelling-level formulation is
\begin{equation}\label{eq:stieltjes-energy-space}
 \mathcal W_g
 :=W_g^{1,2,2}(I;V,V')\cap BC_g(I;H),
 \qquad
 \mathcal W:=\mathcal W_{g_1}\times\mathcal W_{g_2}\times\mathcal W_{g_3}.
\end{equation}
The $BC_g(I;H)$ representative supplies meaningful left values and post-atomic right limits, while the two Bochner terms encode spatial energy and the weak Stieltjes derivative. Existence theory and compactness results for such spaces are given in~\cite{FernandezTojo2020Bochner,FernandezTojoVillanueva2024}. In the classical case $g(t)=t$, this reduces to the familiar Lions space with $u\in L^2(I;V)$ and $u'\in L^2(I;V')$, together with its $H$-continuous representative.

\subsection{Why Stieltjes time is suited to seasonal biological processes}\label{subsec:why-stieltjes-biology}

A derivator can encode qualitatively different biological regimes without switching between unrelated equations. In~\eqref{eq:benchmark-derivator-decomposition}:
\begin{itemize}
 \item $a_g(t)>0$ represents ordinary continuous biological activity, with the rate measured relative to the effective clock;
 \item $a_g(t)=0$ on an interval gives $\mu_g$-mass zero there, so a $g$-absolutely continuous state is constant on that inactive component; this naturally represents diapause, hibernation or another period in which the compartment does not evolve;
 \item an atom $\delta_k\delta_{t_k}$ converts the differential equation into the jump relation~\eqref{eq:g-ac-jump-identity}, allowing emergence, maturation, production, mortality pulses or compartmental resets to be prescribed at a known biological time;
 \item different derivators $g_i$ allow different compartments to have asynchronous active, dormant and impulsive phases.
\end{itemize}
This continuous--inactive--impulsive unification is a central motivation of Stieltjes differential equations~\cite{PousoRodriguez2015,FrigonLopezPouso2017}. Its use in mathematical biology and in systems with several derivators is analysed in~\cite{LopezPousoMarquez2018,LopezPousoMarquez2019}; the corresponding parabolic framework, including periods with no Stieltjes-time evolution, is developed in~\cite{FernandezTojo2020Bochner}; and the specific calendar of \textit{Vespa velutina} is taken from~\cite{AreaVelutina2025}.

\begin{figure}[htbp]
\centering
\begin{tikzpicture}
\begin{axis}[
 width=0.78\linewidth,height=5.0cm,
 axis lines=left,
 xlabel={chronological time $t$},ylabel={intrinsic time $g(t)$},
 xmin=0,xmax=6.4,ymin=0,ymax=5.4,
 xtick={0,2,4,6},ytick={0,2,3,5},
 clip=false]
\addplot[thick,black] coordinates {(0,0) (2,2) (4,2)};
\addplot[thick,black] coordinates {(4.02,3) (6,5)};
\addplot[dashed,black] coordinates {(4,2) (4,3)};
\addplot[only marks,mark=*,black] coordinates {(4,2)};
\node[anchor=south] at (axis cs:1,1) {active phase};
\node[anchor=south] at (axis cs:3,2) {dormancy};
\node[anchor=west,align=left] at (axis cs:4.08,2.5) {atomic\\transition};
\end{axis}
\end{tikzpicture}
\caption{Schematic left-continuous derivator combining active evolution, a flat inactive period and a right jump. The associated measure has continuous mass on the increasing pieces, zero mass on the flat piece and an atom at the jump; see~\cite{PousoRodriguez2015,FernandezTojo2020Bochner}.}
\label{fig:schematic-derivator}
\end{figure}

\begin{remark}[Chronological and biological time]
A flat part of $g$ does not remove the corresponding chronological dates from the model. It states that no effective time elapses for that compartment. Likewise, an atom has zero chronological duration but positive $\mu_g$-mass. Thus, dormancy and abrupt transitions are represented by the clock itself rather than by ad hoc case distinctions in every equation.
\end{remark}

\subsection{Spatial setting and state variables}

Let $\Omega\subset\R^2$ be a bounded polygonal domain and let
\[
 V:=H^1(\Omega)\hookrightarrow H:=L^2(\Omega)\equiv H'\hookrightarrow V':=H^1(\Omega)'.
\]
The duality between $V'$ and $V$ is denoted by $\langle\cdot,\cdot\rangle_{V',V}$, and the inner product in $H$ by $(\cdot,\cdot)_H$. The biological calendar is encoded by three derivators $g_i$, one for each state variable,
\begin{align*}
 y_1(t,x)&=\text{foundress queens},\\
 y_2(t,x)&=\text{future foundress queens},\\
 y_3(t,x)&=\text{workers and males}.
\end{align*}
Let $\nu_i\ge0$ be the diffusion coefficient of compartment $i$. The numerical benchmark takes $\nu_1=\nu_2=\nu_3=\nu$. Homogeneous Neumann conditions,
\[
 \partial_{\mathbf n}y_i=0\quad\text{on }(T_0,T_F)\times\partial\Omega,
\]
represent no flux through the computational boundary.

\subsection{Product rule, integration by parts and right limits}\label{subsec:stieltjes-product-ibp}

For sufficiently regular scalar functions \(u\) and \(v\), the Stieltjes product rule is
\begin{equation}\label{eq:stieltjes-product-rule}
 \Dg(uv)(t)
 =\Dg u(t)\,v(t)+u(t)\,\Dg v(t)
 +\Dg u(t)\,\Dg v(t)\,\Delta^+g(t).
\end{equation}
The correction term is supported on the atoms of $\mu_g$. Its origin can be seen directly. At a non-atomic point, $\Delta^+g(t)=0$ and the ordinary quotient argument gives the usual product rule relative to the clock $g$. At an atom,
\begin{align*}
 \Dg(uv)(t)
 &=\frac{u(t^+)v(t^+)-u(t)v(t)}{\Delta^+g(t)}\\
 &=\Dg u(t)\,v(t)+u(t)\,\Dg v(t)
   +\Delta^+g(t)\,\Dg u(t)\,\Dg v(t),
\end{align*}
because $u(t^+)=u(t)+\Delta^+g(t)\Dg u(t)$ and similarly for $v$. Thus the additional term is exactly the product of the two jump increments divided by the atomic mass. Formula~\eqref{eq:stieltjes-product-rule} and its full function-space justification are established in~\cite{PousoRodriguez2015,FernandezMarquezTojo2025Product}.

It is useful to rewrite~\eqref{eq:stieltjes-product-rule} as
\begin{equation}\label{eq:stieltjes-product-right-form}
 \Dg(uv)(t)=u(t^+)\Dg v(t)+v(t)\Dg u(t).
\end{equation}
Indeed, $u(t^+)=u(t)+\Delta^+g(t)\Dg u(t)$. If $p,z\in\mathcal{AC}_g([a,T];\R^m)$ have the required integrability, apply~\eqref{eq:stieltjes-product-right-form} componentwise to $\langle p,z\rangle$ and integrate. The Stieltjes fundamental theorem gives
\[
 \int_{[a,T)}\Dg\langle p,z\rangle\,\dd\mu_g
 =\langle p(T),z(T)\rangle-\langle p(a),z(a)\rangle.
\]
Consequently,
\begin{equation}\label{eq:stieltjes-ibp-right-value}
 \int_{[a,T)} \langle p^+(t),\Dg z(t)\rangle\,\dd\mu_g(t)
 =\langle p(T),z(T)\rangle-\langle p(a),z(a)\rangle
 -\int_{[a,T)}\langle \Dg p(t),z(t)\rangle\,\dd\mu_g(t),
\end{equation}
where
\begin{equation}\label{eq:right-value-p-plus}
 p^+(t):=p(t)+\Dg p(t)\,\Delta^+g(t)=p(t^+).
\end{equation}
The right value is therefore forced by the atomic product rule; it is not a discretionary time discretization. The continuous identity is used only as structural motivation here. The implemented adjoint is obtained by transposing the fully discrete residual, which automatically reproduces the correct propagation and jump transposes. A duality treatment of Stieltjes differential and integral equations is given in~\cite{MarquezSlavikTvrdy2022}.
\subsection{Weak evolution over one time step}

Assume that $y\in\mathcal W_g$ satisfies
\[
 \Dg y(t)+A(t)y(t)=f(t)
 \quad\text{in }V',\qquad \mu_g\text{-a.e. }t,
\]
where $A(t):V\to V'$ is an elliptic operator. For the left-continuous convention, the mass of $(s_{n-1},s_n]$ is
\begin{equation}\label{eq:stieltjes-step-measure}
 \mu_g((s_{n-1},s_n])
 =g(s_n^+)-g(s_{n-1}^+)=:\Delta g^n.
\end{equation}
This is the same measure convention as in~\eqref{eq:stieltjes-measure-convention}, expressed through right limits: the half-open interval is shifted so that an atom at $s_n$ belongs to step $n$, whereas an atom at $s_{n-1}$ does not.
The integral representation of $y$ gives
\[
 y(s_n^+)-y(s_{n-1}^+)
 =\int_{(s_{n-1},s_n]}\Dg y(t)\,\dd\mu_g(t).
\]
Substitution of the evolution equation therefore yields
\begin{equation}\label{eq:stieltjes-integrated-step}
 y(s_n^+)-y(s_{n-1}^+)
 +\int_{(s_{n-1},s_n]}A(t)y(t)\,\dd\mu_g(t)
 =\int_{(s_{n-1},s_n]}f(t)\,\dd\mu_g(t).
\end{equation}
The implicit Stieltjes--Euler rule replaces the two integrands by their post-step values and uses the exact slab mass $\Delta g^n$. Thus
\[
 \int_{(s_{n-1},s_n]}A(t)y(t)\,\dd\mu_g(t)
 \approx \Delta g^n A(s_n)y^n,
 \qquad
 \int_{(s_{n-1},s_n]}f(t)\,\dd\mu_g(t)
 \approx \Delta g^n f^n.
\]
In the absence of a source this gives
\begin{equation}\label{eq:stieltjes-euler-motivation}
 y^n-y^{n-1}+\Delta g^n A(s_n)y^n=0.
\end{equation}
If $\Delta g^n=0$, the canonical state is propagated unchanged. If the slab contains an atom, its mass is part of the same $\Delta g^n$ and hence of the same implicit solve. This derivation explains why continuous and atomic evolution must not be advanced by two unrelated update rules. Numerical methods for Stieltjes differential equations and the parabolic Stieltjes--Euler construction are discussed in~\cite{FernandezTojo2020Numerical,FernandezTojo2020Bochner}.

\subsection{Biological calendar and averaging notation}

Let $N_G$ be the number of generations. For $k=1,\ldots,N_G$, let
\[
 T_{k,0}<T_{k,1}<T_{k,2}<T_{k,3}<T_{k,4}<T_{k,5}=T_{k+1,0}.
\]
The five intervals represent foundress dispersal, primary-nest activity, secondary-nest activity, future-foundress activity/dispersal, and hibernation. The biological events occur at $T_{k,1},T_{k,2},T_{k,3},T_{k,4}$: first-generation workers are produced at $T_{k,1}$; second-generation workers are added at $T_{k,2}$; future foundresses and males are produced at $T_{k,3}$; and future foundresses generate the next foundress compartment while the other two compartments are reset at $T_{k,4}$. This derivator-based calendar is the one developed for the hornet population model in~\cite{AreaVelutina2025}.

Accordingly, $g_1$ has an atom at $T_{k,4}$ and is constant during $(T_{k,4},T_{k,5}]$; $g_2$ has atoms at $T_{k,3}$ and $T_{k,4}$; and $g_3$ has atoms at all four event times. For $a<b$ and $g(b)>g(a)$, define the Stieltjes phase average
\begin{equation}\label{eq:continuous-average}
 \langle z\rangle_{g,[a,b)}
 :=\frac{\displaystyle\int_{[a,b)}z(s)\,\dd\mu_g(s)}{g(b)-g(a)}.
\end{equation}
We denote the biological transfer coefficients by $\beta_1,\ldots,\beta_5\ge0$, reserving $\mu_i\ge0$ for the natural mortality of compartment $i$. The common-mortality model is recovered as the special case $\mu_1=\mu_2=\mu_3=\alpha_0$.

\begin{assumption}\label{ass:continuous-basic}
The diffusion coefficients satisfy $\nu_i\ge0$, the natural mortalities satisfy $\mu_i\ge0$, and the initial data satisfy $y_{1,0}\in H$, $y_{1,0}\ge0$, and $y_{2,0}=y_{3,0}=0$. Every phase average used in an event datum has a strictly positive clock increment. The fixed compartment--phase masks are measurable, satisfy $0\le\sigma_i(t)\le1$, and are independent of the control. For the academic benchmark, the trapping kernels are smooth, compactly supported functions of their temporal and spatial arguments, and their supports remain inside the prescribed time interval and computational domain for every admissible control. The Galicia computation instead uses domain-truncated kernels normalized by their integral over the finite-element mesh; their normalization denominators are required to stay strictly positive, and a separate coverage safeguard is reported.
\end{assumption}

\section{Continuous finite-dimensional trapping control}

\subsection{Admissible controls}

Fix the number of traps \(N_T\). To match the ordering used in the numerical code, write
\begin{equation}\label{eq:control-vector-order}
 u=(\boldsymbol\tau,\boldsymbol x,\boldsymbol y)
 = (\tau_1,\ldots,\tau_{N_T},x_1,\ldots,x_{N_T},y_1,\ldots,y_{N_T})
 \in\R^{3N_T},
\end{equation}
with trap locations \(z_k=(x_k,y_k)\). Let
\[
 \Omega_{\rm trap}=[x_{\min},x_{\max}]\times[y_{\min},y_{\max}]
 \Subset\Omega.
\]
The admissible set used in the analysis and in the projected first-order conditions is the rectangular box
\begin{equation}\label{eq:Uad-continuous}
 \Uad=[\tau_{\min},\tau_{\max}]^{N_T}
 \times[x_{\min},x_{\max}]^{N_T}
 \times[y_{\min},y_{\max}]^{N_T}.
\end{equation}
Nonrectangular accessibility restrictions can be imposed in applications, but then they require additional constraints and the componentwise box projection used below must be modified accordingly.

\subsection{Trap kernel and controlled mortality}

Let \(E_M>0\), \(T_M>0\), and \(R_M>0\) denote the trap intensity and its temporal and spatial support radii. Define
\begin{equation}\label{eq:trap-control-continuous}
 a_u(t,x)=E_M\sum_{k=1}^{N_T}
 \delta_{T_M}^{1D}(t-\tau_k)\,
 \delta_{R_M}^{2D}(x-z_k).
\end{equation}
Let $\sigma_i:I\to[0,1]$ be a prescribed compartment--phase mask, independent of $u$. The mortality acting on compartment \(i\) is
\begin{equation}\label{eq:controlled-mortality}
 m_{i,u}(t,x)=\mu_i+\sigma_i(t)a_u(t,x).
\end{equation}
The academic benchmark uses $\sigma_i\equiv1$ on every state equation that is present in a phase; the Galicia implementation uses the audited F--P--S--Q--H masks. At an event node, the mask is assigned according to the destination (post-event) phase, matching the implicit post-step convention. Since the masks are fixed, they do not create additional control variables, but they must appear in every state, sensitivity, and gradient formula.
One possible choice of smooth compactly supported bumps is
\begin{align}\label{eq:explicit-bumps}
\delta_{T_M}^{1D}(t-\tau_k)
&=\frac1{C_1T_M}
\exp\!\left(\frac1{((t-\tau_k)/T_M)^2-1}\right)
\one_{\{|t-\tau_k|<T_M\}},\\
\delta_{R_M}^{2D}(x-z_k)
&=\frac1{C_2R_M^2}
\exp\!\left(\frac1{|(x-z_k)/R_M|^2-1}\right)
\one_{\{|x-z_k|<R_M\}},
\end{align}
where the normalization constants used in the implementation are
\[
 C_1=0.4439938161680708,
 \qquad
 C_2=0.4665125410646768.
\]
These are the analytical counterparts of the routines used in the FreeFEM++ implementation.

\begin{remark}[Relation with bilinear removal control]\label{rem:bilinear-removal}
The model is a finite-dimensional subfamily of a bilinear removal problem. Instead of allowing an arbitrary nonnegative field \(m(t,x)\), the additional mortality is restricted to the interpretable family \(a_u\) generated by a prescribed number of localized traps.
\end{remark}

\section{Continuous controlled state system}

For fixed \(u\), define the elliptic operator \(\mathcal L_{i,u}(t):V\to V'\) by
\begin{equation}\label{eq:continuous-elliptic-operator}
 \langle\mathcal L_{i,u}(t)w,v\rangle
 =\nu_i\int_\Omega\nabla w\cdot\nabla v\dd x
 +\int_\Omega m_{i,u}(t,x)wv\dd x.
\end{equation}
To express the biological atoms without imposing a second, independent jump rule, introduce event data \(\mathcal B_i(t,Y)\). They are equal to the current value outside the event set and, at the four biological times, are defined by
\begin{align}\label{eq:biological-event-data}
\mathcal B_3(T_{k,1},Y)
 &=\beta_2\langle y_1\rangle_{g_1,[T_{k,0},T_{k,1})},\notag\\
\mathcal B_3(T_{k,2},Y)
 &=y_3(T_{k,2})+\beta_3\langle y_1\rangle_{g_1,[T_{k,1},T_{k,2})},\notag\\
\mathcal B_2(T_{k,3},Y)
 &=\beta_4\langle y_1\rangle_{g_1,[T_{k,2},T_{k,3})},\notag\\
\mathcal B_3(T_{k,3},Y)
 &=y_3(T_{k,3})+\beta_5\langle y_1\rangle_{g_1,[T_{k,2},T_{k,3})},\notag\\
\mathcal B_1(T_{k,4},Y)
 &=\beta_1\langle y_2\rangle_{g_2,[T_{k,3},T_{k,4})},\notag\\
\mathcal B_2(T_{k,4},Y)&=0,
 \qquad \mathcal B_3(T_{k,4},Y)=0.
\end{align}
For components not listed at a given event, \(\mathcal B_i(t,Y)=y_i(t)\). Let $\Delta^+g_i(t)=g_i(t^+)-g_i(t)$ denote the atomic mass. At an atom with $\Delta^+g_i(t)>0$, define
\begin{equation}\label{eq:continuous-event-source}
 F_i(t,Y)=\frac{\mathcal B_i(t,Y)-y_i(t)}{\Delta^+g_i(t)},
\end{equation}
and set $F_i(t,Y)=0$ outside the event set. The academic benchmark uses unit atomic masses, in which case \eqref{eq:continuous-event-source} reduces to $\mathcal B_i-y_i$. Replacement events therefore include removal of the pre-event value, whereas additive events retain that value in $\mathcal B_i$.

\begin{definition}[Weak solution of the controlled continuous problem]\label{def:continuous-solution}
Let \(u\in\Uad\). A triple \(Y=(y_1,y_2,y_3)\in\mathcal W\) is a weak solution if
\begin{equation}\label{eq:continuous-initial-data}
 y_1(T_0)=y_{1,0},\qquad y_2(T_0)=0,\qquad y_3(T_0)=0,
\end{equation}
and, for every \(v\in V\),
\begin{equation}\label{eq:weak-solution-general}
 \langle\mathrm D_{g_i}y_i(t),v\rangle
 +\langle\mathcal L_{i,u}(t)y_i(t^+),v\rangle
 =\langle F_i(t,Y),v\rangle,
 \qquad \mu_{g_i}\text{-a.e. }t,
\end{equation}
for \(i=1,2,3\).
\end{definition}

\begin{remark}[Continuous model versus implemented scheme]
Equation \eqref{eq:weak-solution-general} is a modelling-level Stieltjes system. At an atom, multiplication by $\Delta^+g_i(t)$ gives
\[
 y_i(t^+)-y_i(t)+\Delta^+g_i(t)\mathcal L_{i,u}(t)y_i(t^+)
 =\mathcal B_i(t,Y)-y_i(t).
\]
Cancelling the pre-event value on both sides yields the transparent resolvent form
\begin{equation}\label{eq:continuous-atomic-resolvent}
 \bigl(I+\Delta^+g_i(t)\mathcal L_{i,u}(t)\bigr)y_i(t^+)
 =\mathcal B_i(t,Y),
\end{equation}
which is the implicit post-event relation used by the discrete scheme. The event datum is assembled from previously stored states and phase averages. No theorem on existence, uniqueness, or differentiability of the continuous control-to-state map is claimed here; the proved well-posedness, differentiability, adjoint identity, and optimization results below concern the fully discrete residual. Continuous-to-discrete convergence is outside the scope of this paper.
\end{remark}

\section{Modelling-level continuous objective}

If the continuous controlled system admits a solution $Y(u)$, a natural modelling-level objective is
\begin{equation}\label{eq:continuous-cost}
 J(Y,u)=\frac12\sum_{i=1}^3\omega_i
 \int_{[T_0,T_F)}\|y_i(t^+)\|_H^2\dd\mu_{g_i}(t)+\Psi(u),
\end{equation}
where \(\omega_i\ge0\) and \(\Psi\) is an optional operational cost. The associated formal control problem is
\begin{equation}\label{eq:continuous-control-problem}
 \min_{u\in\Uad}J(Y(u),u).
\end{equation}
This display specifies a modelling target only: because a continuous control-to-state theorem is not proved here, it is not used to assert existence or differentiability of a continuous reduced objective. The analysis and optimization results below are stated directly for the finite-dimensional discrete residual.

\section{Discretization of the controlled problem}

\subsection{Time grid, biological pointers and Stieltjes increments}

Let
\[
 T_0=s_0<s_1<\cdots<s_N=T_F
\]
contain every \(T_{k,j}\), and let \(p_{k,j}\) satisfy \(s_{p_{k,j}}=T_{k,j}\). Define
\begin{equation}\label{eq:stieltjes-increments}
 \Delta g_i^n=g_i(s_n^+)-g_i(s_{n-1}^+)
 =\mu_{g_i}((s_{n-1},s_n]),
 \qquad n=1,\ldots,N.
\end{equation}
The vector \(c_i^n\) below approximates the post-step state at \(s_n^+\). Consequently, an event occurring at \(s_n=T_{k,j}\) is incorporated in step \(n=p_{k,j}\), not in step \(p_{k,j}+1\).

\subsection{Finite element space and matrices}

Let \(V_h\subset H^1(\Omega)\) have basis \(\{\varphi_1,\ldots,\varphi_q\}\), and write
\[
 y_{i,h}^n=\sum_{r=1}^q(c_i^n)_r\varphi_r.
\]
The mass, stiffness and weighted-mass matrices are defined, for
$r,s=1,\ldots,q$, by
\begin{alignat}{2}
 M_{rs}
 &:=\int_\Omega \varphi_r(x)\varphi_s(x)\,\dd x,
 &\qquad
 K_{rs}
 &:=\int_\Omega \nabla\varphi_r(x)\!\cdot\!\nabla\varphi_s(x)\,\dd x,
 \label{eq:mass-stiffness}\\[0.3em]
 M(b)_{rs}
 &:=\int_\Omega b(x)\varphi_r(x)\varphi_s(x)\,\dd x,
 &\qquad
 &b\in L^\infty(\Omega).
 \label{eq:weighted-mass}
\end{alignat}
Thus $M=M(1)$, while $M(b)$ represents multiplication by $b$ in the
finite-element basis. For the nonnegative coefficients used below, $M(b)$ is
symmetric positive semidefinite.

\subsection{Discrete control}

At time node \(s_n\), set
\begin{equation}\label{eq:discrete-control-field}
 a_u^n(x)=E_M\sum_{k=1}^{N_T}
 \delta_{T_M}^{1D}(s_n-\tau_k)\delta_{R_M}^{2D}(x-z_k).
\end{equation}
Let $\sigma_i^n\in[0,1]$ be the fixed mask used for compartment $i$ at node $s_n$. The mortality matrix for compartment \(i\) is
\begin{equation}\label{eq:controlled-mass-matrix}
 M_{i,u}^n=M(\mu_i+\sigma_i^n a_u^n)=\mu_iM+M(\sigma_i^n a_u^n).
\end{equation}
In FreeFEM++, \(a_u^n\) is first interpolated in \(V_h\); all formulas below are understood for that same finite-element coefficient.

For a perturbation \(h\in\R^{3N_T}\), the directional variation is
\begin{equation}\label{eq:discrete-control-variation}
 \delta a_h^n(x)=E_M\sum_{k=1}^{N_T}\Big[
 \partial_{\tau_k}\delta_{T_M}^{1D}(s_n-\tau_k)h_{\tau_k}\,
 \delta_{R_M}^{2D}(x-z_k)
 +\delta_{T_M}^{1D}(s_n-\tau_k)
 \nabla_{z_k}\delta_{R_M}^{2D}(x-z_k)\cdot(h_{x_k},h_{y_k})\Big],
\end{equation}
and therefore
\begin{equation}\label{eq:discrete-matrix-variation}
 D_uM_{i,u}^n[h]=M(\sigma_i^n\delta a_h^n).
\end{equation}

\subsection{Discrete biological averages}

The implementation uses the arithmetic phase average
\begin{equation}\label{eq:discrete-average}
 \calA_{i,k}^{a,b}(C)
 =\frac1{m_k^{a,b}}\sum_{r=p_{k,a}}^{p_{k,b}-1}c_i^r,
 \qquad m_k^{a,b}=p_{k,b}-p_{k,a}.
\end{equation}
This is the precise operator differentiated and transposed in the validation code. It is part of the definition of the discrete academic benchmark. On a uniform active interval it resembles an ordinary-time phase average, but it is not, in general, a quadrature formula for the Stieltjes average \eqref{eq:continuous-average} when endpoint atoms carry mass. Consequently, the adjoint and optimization results are exact for \eqref{eq:discrete-average}; convergence of this discrete averaging convention to the modelling-level Stieltjes average is not analysed.

\section{Fully discrete state equation}

For each step \(n\), let \(b_i^n(C)\) denote the datum on the right-hand side of the implicit elliptic solve. For \(k=1,\ldots,N_G\), its event values are
\begin{align}\label{eq:event-datum-b1}
 b_1^n(C)&=
 \begin{cases}
 \beta_1\calA_{2,k}^{3,4}(C),&n=p_{k,4},\\
 c_1^{n-1},&\text{otherwise},
 \end{cases}\\[0.4em]
 b_2^n(C)&=
 \begin{cases}
 \beta_4\calA_{1,k}^{2,3}(C),&n=p_{k,3},\\
 0,&n=p_{k,4},\\
 c_2^{n-1},&\text{otherwise},
 \end{cases}\label{eq:event-datum-b2}\\[0.4em]
 b_3^n(C)&=
 \begin{cases}
 \beta_2\calA_{1,k}^{0,1}(C),&n=p_{k,1},\\
 c_3^{n-1}+\beta_3\calA_{1,k}^{1,2}(C),&n=p_{k,2},\\
 c_3^{n-1}+\beta_5\calA_{1,k}^{2,3}(C),&n=p_{k,3},\\
 0,&n=p_{k,4},\\
 c_3^{n-1},&\text{otherwise}.
 \end{cases}\label{eq:event-datum-b3}
\end{align}
The ranges in \eqref{eq:event-datum-b1}--\eqref{eq:event-datum-b3} are causal: every average ends before the event node at which it is used. Define
\begin{equation}\label{eq:state-step-matrix}
 A_i^n(u)=M+\Delta g_i^n\bigl(\nu_iK+M_{i,u}^n\bigr).
\end{equation}
The fully discrete state equation is
\begin{equation}\label{eq:discrete-step}
 A_i^n(u)c_i^n=Mb_i^n(C),
 \qquad i=1,2,3,\quad n=1,\ldots,N,
\end{equation}
with prescribed \(c_1^0=c_{1,0}\) and \(c_2^0=c_3^0=0\). At a biological event, the continuous part and the atomic part of \(\Delta g_i^n\) both enter the same implicit solve, exactly as in the implementation.

\section{Algebraic formulation of the discrete state system}

Collect all coefficients in
\[
 C=(c_1^0,\ldots,c_1^N,c_2^0,\ldots,c_2^N,c_3^0,\ldots,c_3^N)\in\R^Q,
 \qquad Q=3(N+1)q.
\]
Define \(\calF(C,u)=0\) by
\begin{align}\label{eq:initial-residuals}
 \calF_i^0(C,u)&=c_i^0-c_{i,0},\\
 \calF_i^n(C,u)&=A_i^n(u)c_i^n-Mb_i^n(C),
 \qquad n=1,\ldots,N.\label{eq:residual-components}
\end{align}
This algebraic residual is the object differentiated in both the linearized and adjoint codes.

\begin{proposition}[Well-posedness and differentiability of the discrete state equation]\label{prop:discrete-state-wellposed}
Assume that the assembled mortality coefficients are nonnegative and depend $C^1$-smoothly on the control. In the normalized Galicia formulation, also assume that every kernel normalization denominator is bounded away from zero on an open set containing $\Uad$. Then, for every $u\in\Uad$, the discrete state equation $\calF(C,u)=0$ has a unique solution. Moreover, the control-to-state map $S:u\mapsto C$ is continuously differentiable on that open set.
\end{proposition}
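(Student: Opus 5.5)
The plan is to exploit the causal, block lower-triangular structure of the residual $\calF$ and argue by forward recursion in $n$. First I will show that each step matrix $A_i^n(u)=M+\Delta g_i^n(\nu_iK+M_{i,u}^n)$ is symmetric positive definite for every $u$ in the open set $\mathcal O\supset\Uad$ on which the hypotheses hold. Indeed, $M$ is symmetric positive definite because $\{\varphi_r\}$ is a basis of $V_h$. $K$ is symmetric positive semidefinite, and $M_{i,u}^n=M(\mu_i+\sigma_i^na_u^n)$ is symmetric positive semidefinite because the assembled mortality coefficient is assumed nonnegative. Finally, $\Delta g_i^n=\mu_{g_i}((s_{n-1},s_n])\ge0$ by monotonicity of $g_i$. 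Hence $\xi^\top A_i^n(u)\xi\ge\xi^\top M\xi>0$ for $\xi\neq0$, and $A_i^n(u)$ is invertible. The case $\Delta g_i^n=0$ is covered, since then $A_i^n=M$.

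Existence and uniqueness then follow by induction. The initial residuals fix $c_i^0$. Assume $c_j^r$ is uniquely determined for all $j$ and all $r\le n-1$. By the causality remark after \eqref{eq:event-datum-b3}, each datum $b_i^n(C)$ depends only on $c_i^{n-1}$ and on averages $\calA_{j,k}^{a,b}$ whose index range ends at $p_{k,b}-1\le n-1$ when $n=p_{k,b}$. So $b_i^n$ is a known affine function of already determined coefficients, and $c_i^n=A_i^n(u)^{-1}Mb_i^n$ is uniquely defined. Equivalently, ordering the unknowns by time level makes $D_C\calF(C,u)$ block lower triangular, with diagonal blocks $I$ at $n=0$ and $A_i^n(u)$ for $n\ge1$. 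Thus $D_C\calF$ is invertible, with determinant $\prod_{i,n}\det A_i^n(u)>0$.

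For differentiability, I will note that $\calF$ is affine in $C$ and that its $u$-dependence enters only through $M(\sigma_i^na_u^n)$. Since $C\mapsto b_i^n(C)$ is linear and $u$-independent, $\calF\in C^1(\R^Q\times\mathcal O)$ reduces to $u\mapsto M_{i,u}^n$ being $C^1$. In the academic case this holds because the bumps \eqref{eq:explicit-bumps} are $C^\infty$ in $(\tau_k,z_k)$, the interpolation into $V_h$ is linear, and $b\mapsto M(b)$ is linear and bounded; the derivative is \eqref{eq:discrete-matrix-variation}. In the normalized Galicia formulation, the kernel is a quotient of a smooth numerator by a denominator that is $C^1$ and bounded away from zero on $\mathcal O$, so the quotient is $C^1$. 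These facts are exactly the stated hypotheses. Given invertibility of $D_C\calF$ everywhere on $\mathcal O$ and global uniqueness, the implicit function theorem gives local $C^1$ solution branches, and these coincide with $S$. Alternatively, the recursion $c_i^n=A_i^n(u)^{-1}Mb_i^n$ composes $C^1$ maps, because matrix inversion is smooth on invertible matrices, so $S\in C^1(\mathcal O)$ with $D_uS=-(D_C\calF)^{-1}D_u\calF$.

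I expect the main obstacle to be bookkeeping rather than analysis. One point is verifying strict causality of every average at every event pointer, including coincident pointers and the cross-generation reset $n=p_{k,4}$, where $c_1$ depends on $c_2$ at the same $k$. The other is ensuring that the nonnegativity and $C^1$ hypotheses hold on an open neighbourhood of $\Uad$ rather than only on the box, which is what the statement's open set is for.
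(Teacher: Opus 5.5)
Your proposal is correct and follows essentially the same route as the paper. The shared steps are the coercivity bound $v^\top A_i^n(u)v\ge v^\top Mv>0$ (which covers flat steps with $\Delta g_i^n=0$), forward recursion justified by the strictly causal ranges of the phase averages, and $C^1$ dependence through smoothness of matrix inversion or, equivalently, the implicit function theorem applied to the block lower-triangular $\calF_C$. The one point the paper makes explicit and you omit is that the same bound holds verbatim for the nodal lumped pair used in the Galicia residual: there $v^\top M_Lv=\sum_r m_rv_r^2>0$ with positive lumping weights $m_r=\int_\Omega\varphi_r\,\mathrm{d}x$, and the lumped reaction matrix is diagonal with nonnegative entries.
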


\begin{proof}
Let $v\in\R^q\setminus\{0\}$ and let $v_h=\sum_{r=1}^qv_r\varphi_r$. For the consistent mass matrix, linear independence of the finite-element basis gives
\[
 v^\top Mv=\int_\Omega v_h^2\dd x>0,
 \qquad
 v^\top M_{i,u}^nv=\int_\Omega m_{i,u}^n v_h^2\dd x\ge0.
\]
The Galicia computations replace these reaction--mass matrices by their nodal lumped counterparts. If $m_r=\int_\Omega\varphi_r\dd x$ are the positive lumping weights and $m_{i,u,r}^n\ge0$ are the nodal mortality values, then
\[
 v^\top M_Lv=\sum_{r=1}^qm_rv_r^2>0,
 \qquad
 v^\top M_{i,u,L}^nv=\sum_{r=1}^qm_rm_{i,u,r}^nv_r^2\ge0.
\]
Thus the argument below applies verbatim with $(M,M_{i,u}^n)$ denoting either the consistent pair or the lumped pair used in the Galicia residual. Moreover, $v^\top Kv\ge0$. Consequently,
\begin{equation}\label{eq:discrete-step-coercivity}
 v^\top A_i^n(u)v
 =v^\top Mv+\Delta g_i^n\bigl(\nu_i v^\top Kv+v^\top M_{i,u}^nv\bigr)
 \ge v^\top Mv>0.
\end{equation}
Thus every $A_i^n(u)$ is symmetric positive definite, including a flat-clock step $\Delta g_i^n=0$, for which $A_i^n=M$.

The causality of the calendar is essential. Each average in~\eqref{eq:event-datum-b1}--\eqref{eq:event-datum-b3} ends at index $p_{k,b}-1$, whereas the corresponding event equation is solved at $p_{k,b}$. Therefore $b_i^n(C)$ depends only on coefficients already available when step $n$ is reached. Starting from the prescribed initial blocks, one obtains $c_i^n$ uniquely by solving
\[
 c_i^n=A_i^n(u)^{-1}Mb_i^n(C)
\]
in chronological order. This proves existence and uniqueness without invoking a global nonlinear theorem.

For differentiability, observe that the event maps $b_i^n$ are affine combinations of previously computed states. The matrices $A_i^n(u)$ are $C^1$ in $u$: this is immediate for translated smooth bumps; for normalized kernels it follows from the quotient rule once the denominator is positive. Matrix inversion is $C^1$ on the open set of nonsingular matrices. Induction over $n$ therefore shows directly that each $c_i^n=S_i^n(u)$ is $C^1$. Equivalently, $\calF_C$ is block lower triangular with identity initial blocks and diagonal blocks $A_i^n(u)$, so it is nonsingular and the finite-dimensional implicit function theorem applies.
\end{proof}

\section{The discrete optimal control problem}

For the abstract discrete analysis, let
\begin{equation}\label{eq:general-discrete-objective}
 \calJ(C,u)=\Phi(C)+\Psi(u),
\end{equation}
where $\Phi$ is continuously differentiable in the state and $\Psi$ is continuous on $\Uad$ and continuously differentiable on an open neighbourhood of $\Uad$. The differentiability assumption is used for first-order conditions; continuity alone is sufficient for existence. In the Galicia problem, $\Phi$ is the normalized diagnostic $J_{\rm bio}$ and $\Psi$ contains the $C^1$ coverage and separation penalties defined later.

The cost implemented in the academic benchmark is
\begin{equation}\label{eq:code-like-cost}
 \calJ(C,u)=\frac12\sum_{i=1}^3\sum_{n=1}^N
 w_i\Delta g_i^n(c_i^n)^\top Mc_i^n+\Psi(u),
\end{equation}
where \(w_i\ge0\). Equivalently,
\begin{equation}\label{eq:quadratic-cost-for-gradient}
 \calJ(C,u)=\frac12\sum_{i=1}^3\sum_{n=0}^N(c_i^n)^\top Q_i^nc_i^n+\Psi(u),
\end{equation}
with
\begin{equation}\label{eq:academic-Q-matrices}
 Q_i^0=0,\qquad Q_i^n=w_i\Delta g_i^nM\quad(n\ge1).
\end{equation}
The quadratic representation~\eqref{eq:quadratic-cost-for-gradient} is specific to the academic benchmark. In the Galicia experiment, $\nabla_C\calJ$ is the exact state gradient of the normalized diagnostic $J_{\rm bio}$; the coverage and separation penalties depend explicitly on the control and therefore enter $\nabla_u\calJ$. The algebraic linearized and adjoint identities below require only differentiability of $\calJ$ and are unchanged by this replacement.
\begin{samepage}
The full-space discrete control problem is
\begin{equation}\label{eq:opt-problem}
 \boxed{\begin{aligned}
  \underset{(C,u)\in\R^{Q}\times\Uad}{\operatorname{minimize}}
  \quad & \calJ(C,u)\\
  \text{subject to}\quad & \calF(C,u)=0.
\end{aligned}}
\end{equation}
Here $Q$ denotes the total number of discrete state coefficients. By
Proposition~\ref{prop:discrete-state-wellposed}, for every $u\in\Uad$ the
constraint has the unique solution $C=S(u)$. Hence~\eqref{eq:opt-problem} is
equivalent to the reduced problem
\begin{equation}\label{eq:reduced-problem}
 \boxed{\begin{aligned}
  \underset{u\in\Uad}{\operatorname{minimize}}
  \quad & j(u),\\[-0.1em]
  \text{where}\quad &j(u):=\calJ(S(u),u).
\end{aligned}}
\end{equation}
\end{samepage}

\begin{proposition}[Existence for the box-constrained discrete problem]\label{prop:existence}
The reduced problem \eqref{eq:reduced-problem} has at least one solution.
\end{proposition}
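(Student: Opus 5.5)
The plan is the Weierstrass extreme value theorem applied to the reduced objective $j(u)=\calJ(S(u),u)$ on the admissible box $\Uad$. Two ingredients are needed: $\Uad$ must be nonempty and compact, and $j$ must be continuous on $\Uad$.

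\textbf{Compactness of the admissible set.} By \eqref{eq:Uad-continuous}, $\Uad$ is a finite product of closed bounded intervals in $\R^{3N_T}$. It is therefore compact by Heine--Borel. It is nonempty because the bounds are ordered, $\tau_{\min}\le\tau_{\max}$ and similarly for the spatial bounds; we take this as part of the problem data.

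\textbf{Continuity of the control-to-state map.} Proposition~\ref{prop:discrete-state-wellposed} shows that $S$ is $C^1$ on an open set containing $\Uad$, so $S$ is continuous on $\Uad$. If one wants to avoid the full differentiability statement, continuity can be checked directly by the chronological induction used in that proof. The matrices $A_i^n(u)$ depend continuously on $u$: the bumps \eqref{eq:explicit-bumps} are smooth, and in the Galicia case the normalizing denominators are bounded away from zero. Each $A_i^n(u)$ is symmetric positive definite, and matrix inversion is continuous on nonsingular matrices. Finally, $b_i^n(C)$ is affine in earlier blocks, so each $c_i^n$ is a composition of continuous maps.

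\textbf{Continuity of $j$ and conclusion.} $\Phi$ is $C^1$, hence continuous, and $\Psi$ is continuous on $\Uad$ by the standing assumption. So $j=\Phi\circ S+\Psi$ is continuous on $\Uad$. Weierstrass then gives some $\bar u\in\Uad$ with $j(\bar u)=\min_{\Uad}j$. Setting $\bar C=S(\bar u)$, the equivalence stated before \eqref{eq:reduced-problem} shows that $(\bar C,\bar u)$ also solves the full-space problem \eqref{eq:opt-problem}.

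\textbf{Main obstacle.} The only delicate point is that the hypotheses of Proposition~\ref{prop:discrete-state-wellposed} must hold uniformly on all of $\Uad$. These are nonnegativity of the assembled mortality coefficients and, in the Galicia formulation, positivity of the kernel normalization denominators on a neighbourhood of the box. Without them, continuity of $S$ could fail at the boundary of $\Uad$. I would check this explicitly and note that only continuity of $\Psi$, not its differentiability, is used. No convexity, uniqueness, or global-optimality claim follows from this argument.
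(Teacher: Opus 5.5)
Your proposal is correct and follows the paper's proof: both use compactness of the box $\Uad$, continuity of $S$ from Proposition~\ref{prop:discrete-state-wellposed}, and continuity of $\Phi$ and $\Psi$. The only difference is that the paper writes out the Weierstrass argument as a minimizing sequence with a convergent subsequence, and your extra remarks (ordered bounds for nonemptiness, the direct inductive continuity check, and recovering $(S(\bar u),\bar u)$ as a full-space solution) are consistent with the paper.
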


\begin{proof}
The admissible set $\Uad$ is a finite Cartesian product of closed bounded intervals; hence it is nonempty and compact in the finite-dimensional control space. Proposition~\ref{prop:discrete-state-wellposed} gives continuity of $S$, and the assumptions on $\Phi$ and $\Psi$ give continuity of $\calJ$. Therefore $j(u)=\calJ(S(u),u)$ is continuous on $\Uad$. Let $(u^m)$ be a minimizing sequence. Compactness yields a subsequence, not relabelled, such that $u^m\to u^*\in\Uad$. Continuity gives
\[
 j(u^*)=\lim_{m\to\infty}j(u^m)=\inf_{u\in\Uad}j(u),
\]
so $u^*$ is a discrete optimal control.
\end{proof}

\subsection{First-order optimality conditions for the discrete problem}\label{subsec:discrete-optimality-conditions}

\begin{lemma}[Variational inequality and box projection]\label{lem:vi-projection-equivalence}
Let $K\subset\R^m$ be a nonempty closed convex set, let $g\in\R^m$, and let $\rho>0$. For $u\in K$, the following are equivalent:
\[
 g^\top (v-u)\ge0\quad\forall v\in K,
 \qquad
 u=\Pi_K(u-\rho g).
\]
If $K=\prod_{r=1}^m[\ell_r,\bar u_r]$, these conditions are equivalent to the usual componentwise sign conditions at the lower bound, interior, and upper bound.
\end{lemma}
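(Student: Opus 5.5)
The plan is to reduce the statement to the obtuse-angle (variational) characterization of the Euclidean projection onto a closed convex set, and then specialize to a box. First I will recall that for nonempty closed convex $K\subset\R^m$ and any $w\in\R^m$, the projection $\Pi_K(w)$ exists, is unique, and is characterized by
\[
 p=\Pi_K(w)\iff p\in K\ \text{and}\ (w-p)^\top(v-p)\le0\quad\forall v\in K.
\]
Existence follows by minimizing the continuous coercive function $v\mapsto\tfrac12\|v-w\|^2$ over $K$, and uniqueness from strict convexity. The inequality comes from the first-order condition along the segment $p+t(v-p)$, $t\in[0,1]$: the derivative at $t=0^+$ must be nonnegative. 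The converse follows from $\|v-w\|^2=\|v-p\|^2+\|p-w\|^2-2(w-p)^\top(v-p)\ge\|p-w\|^2$.

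With this, the equivalence is a substitution. I take $w=u-\rho g$ and $p=u\in K$. Then $u=\Pi_K(u-\rho g)$ holds if and only if $(-\rho g)^\top(v-u)\le0$ for all $v\in K$. Since $\rho>0$, this is exactly $g^\top(v-u)\ge0$ for all $v\in K$. Both directions are covered simultaneously because the characterization is an equivalence.

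For the box $K=\prod_r[\ell_r,\bar u_r]$ (assuming $\ell_r\le\bar u_r$), the plan is to decouple the variational inequality into components. Choosing $v$ equal to $u$ except in coordinate $r$ gives $g_r(v_r-u_r)\ge0$ for all $v_r\in[\ell_r,\bar u_r]$. Conversely, summing these scalar inequalities recovers the full one, since $g^\top(v-u)=\sum_r g_r(v_r-u_r)$. Each scalar condition then splits into three cases:
\begin{itemize}
 \item if $\ell_r<u_r<\bar u_r$, then $g_r=0$;
 \item if $u_r=\ell_r<\bar u_r$, then $g_r\ge0$;
 \item if $u_r=\bar u_r>\ell_r$, then $g_r\le0$;
 \item if $\ell_r=\bar u_r$, there is no condition.
\end{itemize}
As a consistency check, I will note that $\Pi_K$ acts componentwise as $\min\{\bar u_r,\max\{\ell_r,\cdot\}\}$, so the same sign conditions can be read off directly from $u_r=\min\{\bar u_r,\max\{\ell_r,u_r-\rho g_r\}\}$.

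The only step with real content is the projection characterization, in particular its converse direction. That direction needs the expansion of the squared norm identity, and I will write it out explicitly. The rest is bookkeeping, including the degenerate case $\ell_r=\bar u_r$.
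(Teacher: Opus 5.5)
Your proposal is correct and follows the same route as the paper. You substitute $w=u-\rho g$ and $p=u$ into the obtuse-angle characterization of $\Pi_K$, then decouple the box case by one-coordinate variations. You also fill in what the paper leaves implicit: you prove the projection characterization itself, sum the scalar inequalities to obtain the converse direction for the box, and treat the degenerate case $\ell_r=\bar u_r$.
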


\begin{proof}
The Euclidean projection $p=\Pi_K(x)$ is characterized by
\[
 (x-p)^\top (v-p)\le0\qquad\forall v\in K.
\]
Set $x=u-\rho g$ and $p=u$. The projection inequality becomes
\[
 -\rho g^\top (v-u)\le0\qquad\forall v\in K,
\]
which is exactly the stated variational inequality. For a box, choose variations that change only component $r$. If $\ell_r<u_r<\bar u_r$, both positive and negative variations are admissible and force $g_r=0$. At $u_r=\ell_r$, only positive variations are locally admissible and give $g_r\ge0$; at $u_r=\bar u_r$, only negative variations are admissible and give $g_r\le0$.
\end{proof}

If \(u^\ast\) is a local minimizer, then
\begin{equation}\label{eq:reduced-vi-optimality}
 \nabla j(u^\ast)^\top (v-u^\ast)\ge0
 \qquad\forall v\in\Uad.
\end{equation}
Equivalently, for every \(\rho>0\),
\begin{equation}\label{eq:projection-optimality}
 u^\ast=\Pi_{\Uad}\bigl(u^\ast-\rho\nabla j(u^\ast)\bigr).
\end{equation}
For a component \(r\), this gives
\begin{equation}\label{eq:componentwise-box-optimality}
\begin{cases}
 \partial_rj(u^\ast)=0,&\ell_r<u_r^\ast<\bar u_r,\\
 \partial_rj(u^\ast)\ge0,&u_r^\ast=\ell_r,\\
 \partial_rj(u^\ast)\le0,&u_r^\ast=\bar u_r.
\end{cases}
\end{equation}
Since $\calF_C(C^\ast,u^\ast)$ is nonsingular, the equality constraint has full rank and the adjoint multiplier is unique. With the discrete Lagrangian
\begin{equation}\label{eq:discrete-lagrangian}
 \mathscr L(C,u,P)=\calJ(C,u)-P^\top \calF(C,u),
\end{equation}
the corresponding first-order system is
\begin{align}
 \calF(C^\ast,u^\ast)&=0,\label{eq:kkt-state}\\
 \calF_C(C^\ast,u^\ast)^\top P^\ast
 &=\nabla_C\calJ(C^\ast,u^\ast),\label{eq:kkt-adjoint}\\
 \bigl(\nabla_u\calJ(C^\ast,u^\ast)
 -\calF_u(C^\ast,u^\ast)^\top P^\ast\bigr)^\top(v-u^\ast)&\ge0
 \quad\forall v\in\Uad.\label{eq:kkt-control-vi}
\end{align}
Since convexity of the reduced problem is neither assumed nor established, these are necessary local conditions and not a certificate of global optimality.

\subsection{Projected-box diagnostics}\label{subsec:projected-gradient-residual}

In addition to IPOPT's internal termination quantities, we use two external diagnostics. In physical coordinates,
\begin{equation}\label{eq:projected-gradient-residual}
 R_{\rho}^{\rm phys}(u)=
 \left\|u-\Pi_{\Uad}\bigl(u-\rho\nabla j(u)\bigr)\right\|_2,
 \qquad
 \widehat R_{\rho}^{\rm phys}(u)=
 \frac{R_{\rho}^{\rm phys}(u)}{1+\|u\|_2}.
\end{equation}
This quantity is reproducible but unit-dependent when activation times and spatial coordinates are combined.

For cross-component stationarity we therefore introduce a dimensionless control. Let $u_c$ be a fixed reference control, let $D=\operatorname{diag}(s_1,\ldots,s_m)$ contain positive physical scales, and set
\[
 \zeta=D^{-1}(u-u_c),\qquad
 \widetilde j(\zeta)=\frac{j(u_c+D\zeta)}{J_s},
\]
where $J_s>0$ is an objective scale. Since $D$ is diagonal and positive,
\begin{equation}\label{eq:scaled-gradient-transform}
 \nabla_\zeta\widetilde j(\zeta)
 =\frac{1}{J_s}D^\top \nabla_u j(u_c+D\zeta)
 =\frac{1}{J_s}D\nabla_u j(u_c+D\zeta).
\end{equation}
If $\widetilde{\mathcal U}_{\rm ad}=D^{-1}(\Uad-u_c)$, the dimensionless projected residual is
\begin{equation}\label{eq:dimensionless-projected-residual}
 R^{\rm dim}(\zeta)=
 \left\|\zeta-\Pi_{\widetilde{\mathcal U}_{\rm ad}}
 \bigl(\zeta-\nabla_{\zeta}\widetilde j(\zeta)\bigr)\right\|_{\infty}.
\end{equation}
For the Galicia optimization, times are scaled by one month, spatial offsets by $10^5$ m, and $J_s=1$ because $J_{\rm bio}$ is already dimensionless. All Galicia stationarity residuals reported below refer to~\eqref{eq:dimensionless-projected-residual} applied to the penalized objective $\widehat J$ defined in Section~\ref{sec:galicia-ipopt-optimization}. Because the accepted safeguards are strictly inactive, the same residual is obtained from $J_{\rm bio}$ at the reported controls. By Lemma~\ref{lem:vi-projection-equivalence}, this residual vanishes exactly at a box-stationary point in the scaled variables. Its numerical value depends on the declared physical scales, which are therefore stated explicitly. It remains an external diagnostic and is not identical to IPOPT's internally scaled dual infeasibility.

\section{Linearized state equation and gradient from linearized states}\label{sec:direct-sensitivity-gradient}

Let \(C=S(u)\), and let
\[
 h=(h_{\tau_1},\ldots,h_{\tau_{N_T}},h_{x_1},\ldots,h_{x_{N_T}},h_{y_1},\ldots,h_{y_{N_T}}).
\]
The state sensitivity \(Z=S'(u)h=(z_i^n)_{i=1,2,3;\,n=0,\ldots,N}\) satisfies
\begin{equation}\label{eq:linearized-state}
 \calF_C(C,u)Z+\calF_u(C,u)h=0.
\end{equation}
For \(n\ge1\), its componentwise form is
\begin{equation}\label{eq:linearized-state-componentwise}
 A_i^n(u)z_i^n-MDb_i^n(C)[Z]
 =-\Delta g_i^nM(\sigma_i^n\delta a_h^n)c_i^n.
\end{equation}
The initial sensitivities are zero because the initial data do not depend on \(u\).

\begin{proposition}[Well-posedness of the linearized state equation]\label{prop:linearized-state-wellposed}
For every $u$ in the neighbourhood of Proposition~\ref{prop:discrete-state-wellposed} and every direction $h$, equation~\eqref{eq:linearized-state} has a unique solution $Z=S'(u)h$. It is obtained by the same chronological block solves as the state equation.
\end{proposition}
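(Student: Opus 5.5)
The plan is to reuse the block lower-triangular structure of $\calF_C(C,u)$ from the proof of Proposition~\ref{prop:discrete-state-wellposed}. The Jacobian $\calF_C(C,u)$ is a square matrix on $\R^Q$, so if it is nonsingular, equation~\eqref{eq:linearized-state} has exactly one solution $Z$ for every right-hand side $-\calF_u(C,u)h$. That unique solution coincides with $S'(u)h$: differentiating the identity $\calF(S(u),u)=0$, which holds on the open neighbourhood where $S$ is $C^1$ by Proposition~\ref{prop:discrete-state-wellposed}, shows that $S'(u)h$ solves~\eqref{eq:linearized-state}.

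First, I would write $\calF_C$ blockwise. The initial residuals~\eqref{eq:initial-residuals} contribute identity blocks on $c_i^0$ and do not depend on $u$, so $z_i^0=0$. For $n\ge1$, the residual~\eqref{eq:residual-components} gives diagonal block $A_i^n(u)$ with respect to $c_i^n$, and off-diagonal blocks $-M\,\partial b_i^n/\partial C$. These off-diagonal blocks are constant because $b_i^n$ is affine in $C$; this is clear from~\eqref{eq:event-datum-b1}--\eqref{eq:event-datum-b3} and the linear averages~\eqref{eq:discrete-average}. By the causality remark, $b_i^n$ involves only indices $r\le n-1$ (averages stop at $p_{k,b}-1$, and the other terms are $c_i^{n-1}$ or $0$). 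Differentiating the $u$-dependence of $A_i^n(u)c_i^n$ through~\eqref{eq:discrete-matrix-variation} gives $\calF_u h=\Delta g_i^nM(\sigma_i^n\delta a_h^n)c_i^n$, and this reproduces~\eqref{eq:linearized-state-componentwise}.

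Second, I would order the unknowns chronologically, interleaving the compartments by time index: $(z_1^0,z_2^0,z_3^0,z_1^1,\dots)$. In that ordering $\calF_C$ is block lower triangular, because the datum $b_i^n$ may couple compartments (for example, $b_1$ uses $c_2$), but only at earlier times. Its diagonal blocks are identities and the matrices $A_i^n(u)$, which are symmetric positive definite by~\eqref{eq:discrete-step-coercivity}. Forward substitution then gives existence and uniqueness explicitly:
\[
 z_i^n=A_i^n(u)^{-1}\Bigl(M\,Db_i^n(C)[Z]-\Delta g_i^nM(\sigma_i^n\delta a_h^n)c_i^n\Bigr),
\]
where the right-hand side uses only $z_j^r$ with $r<n$ and the already computed state $c_i^n$. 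These are the same matrices $A_i^n(u)$ and the same ordering as in the state solve, which proves the final sentence of the statement.

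I expect the main obstacle to be bookkeeping rather than analysis. The step that needs care is checking that no event datum depends on a same-time state of another compartment. With the component-major ordering of $C$ the matrix is not literally lower triangular, so one must permute to the time-major ordering and confirm that causality holds across all three compartments at every event node $p_{k,j}$. The Galicia variant also needs a check: with lumped matrices and normalized kernels, $\delta a_h^n$ changes through the quotient rule, but only the right-hand side changes. The diagonal blocks stay positive definite, so the same argument applies.
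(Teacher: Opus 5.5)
Your proposal is correct and follows essentially the same route as the paper: differentiate $\calF(S(u),u)=0$, use the nonsingular causal Jacobian $\calF_C$ from Proposition~\ref{prop:discrete-state-wellposed}, and recover $Z$ by forward recursion with the already assembled matrices $A_i^n(u)$. You also note that the component-major ordering of $C$ must be permuted to a time-major ordering before $\calF_C$ is literally block lower triangular, since, e.g., $b_1^{p_{k,4}}$ involves $c_2$ while $b_2^{p_{k,3}}$ involves $c_1$; this is a correct refinement of a point the paper leaves implicit in its appeal to chronological order.
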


\begin{proof}
Differentiate $\calF(S(u),u)=0$ in direction $h$. The chain rule gives~\eqref{eq:linearized-state}. The matrix $\calF_C(C,u)$ is the same nonsingular block lower-triangular matrix identified in Proposition~\ref{prop:discrete-state-wellposed}; hence the linear system has a unique solution. Componentwise, the right-hand side at step $n$ contains the known control perturbation $-\Delta g_i^nM(\sigma_i^n\delta a_h^n)c_i^n$ and averages or propagated sensitivities from earlier indices only. Therefore the sensitivities can be computed by forward recursion using the matrices $A_i^n(u)$ already assembled for the state solve.
\end{proof}

For ordinary propagation, \(Db_i^n(C)[Z]=z_i^{n-1}\). If an event datum contains
\[
 \beta\calA_{i,k}^{a,b}(C),
\]
its derivative is
\begin{equation}\label{eq:average-linearization}
 D\calA_{i,k}^{a,b}(C)[Z]
 =\frac1{m_k^{a,b}}\sum_{r=p_{k,a}}^{p_{k,b}-1}z_i^r.
\end{equation}
Additive events retain the derivative of the propagated term, whereas replacement and reset events do not. This distinction is encoded exactly by \eqref{eq:event-datum-b1}--\eqref{eq:event-datum-b3}.

By the chain rule,
\begin{equation}\label{eq:direct-directional-gradient-general}
 j'(u)h=D_C\calJ(C,u)[Z]+D_u\calJ(C,u)[h].
\end{equation}
For \eqref{eq:quadratic-cost-for-gradient},
\begin{equation}\label{eq:direct-directional-gradient-quadratic}
 j'(u)h=\sum_{i=1}^3\sum_{n=0}^N(Q_i^nc_i^n)^\top z_i^n+\Psi'(u)h.
\end{equation}
Thus, for coordinate direction \(e_\ell\), the gradient component obtained from the corresponding linearized state \(Z_\ell=S'(u)e_\ell\) is
\begin{equation}\label{eq:direct-gradient-component}
 \frac{\partial j}{\partial u_\ell}(u)
 =\sum_{i=1}^3\sum_{n=0}^N(Q_i^nc_i^n)^\top z_{i,\ell}^n
 +\frac{\partial\Psi}{\partial u_\ell}(u).
\end{equation}
Computing the full gradient in this manner requires \(3N_T\) linearized Stieltjes--parabolic solves after the common state solve.

\section{Adjoint equation and explicit adjoint gradient}\label{sec:adjoint-gradient}

For the sign convention
\[
 \mathscr L(C,u,P)=\calJ(C,u)-P^\top \calF(C,u),
\]
the derivative with respect to the state is
\[
 D_C\mathscr L(C,u,P)[Z]
 =D_C\calJ(C,u)[Z]-P^\top \calF_C(C,u)Z.
\]
We eliminate the unknown state sensitivity from the reduced derivative by choosing $P$ so that this expression vanishes for every $Z$. In Euclidean coordinates this is exactly
\begin{equation}\label{eq:adjoint-algebraic}
 \calF_C(C,u)^\top P=\nabla_C\calJ(C,u).
\end{equation}
The adjoint exists uniquely because $\calF_C$ is nonsingular. Since $\calF_C$ is block lower triangular, its transpose is block upper triangular; hence~\eqref{eq:adjoint-algebraic} is solved backward in the biological calendar.

To derive the reduced gradient, let $Z=S'(u)h$. Then
\begin{align*}
 j'(u)h
 &=D_C\calJ(C,u)[Z]+D_u\calJ(C,u)[h]\\
 &=P^\top \calF_C(C,u)Z+D_u\calJ(C,u)[h]\\
 &=-P^\top \calF_u(C,u)h+D_u\calJ(C,u)[h],
\end{align*}
where the last equality uses the linearized state equation. Thus
\begin{equation}\label{eq:reduced-gradient-adjoint}
 j'(u)h=D_u\calJ(C,u)[h]-P^\top \calF_u(C,u)h,
\end{equation}
and, because this identity holds for every $h$,
\begin{equation}\label{eq:gradient-vector}
 \nabla j(u)=\nabla_u\calJ(C,u)-\calF_u(C,u)^\top P.
\end{equation}
\begin{remark}[Continuous Stieltjes adjoints and discrete transposes]\label{rem:p-plus-discrete-adjoint}
At the continuous level, Stieltjes integration by parts involves the right value \(p^+(t)=p(t^+)\); see \eqref{eq:stieltjes-ibp-right-value}. In the implemented scheme, the same structural issue appears as the exact transpose of propagation, replacement, reset and averaging operators. Deriving \eqref{eq:adjoint-algebraic} from the residual guarantees that these operations are transposed consistently.
\end{remark}

\subsection{Blockwise adjoint equation}\label{subsec:blockwise-adjoint}

For \(r=1,\ldots,N\), the block associated with \(c_i^r\) has the form
\begin{equation}\label{eq:block-adjoint-structure}
 (A_i^r)^\top P_i^r
 -\chi_{i,r}^{\rm self}M^\top P_i^{r+1}
 -\sum_{(j,n,\beta,k,a,b)\in\mathscr A(i,r)}
 \frac{\beta}{m_k^{a,b}}M^\top P_j^n
 =\nabla_{c_i^r}\Phi(C),
\end{equation}
with the convention \(P_i^{N+1}=0\). Here \(\chi_{i,r}^{\rm self}=1\) exactly when \(c_i^r\) occurs with coefficient one in the next datum \(b_i^{r+1}\). This includes ordinary propagation and the propagated part of an additive event, but excludes replacements and resets. The set \(\mathscr A(i,r)\) contains every later event datum whose phase average includes \(c_i^r\). For the initial block,
\begin{equation}\label{eq:block-adjoint-initial}
 P_i^0-\chi_{i,0}^{\rm self}M^\top P_i^1
 -\sum_{(j,n,\beta,k,a,b)\in\mathscr A(i,0)}
 \frac{\beta}{m_k^{a,b}}M^\top P_j^n
 =\nabla_{c_i^0}\Phi(C).
\end{equation}
For the academic quadratic objective, $\nabla_{c_i^r}\Phi=Q_i^rc_i^r$; for the Galicia diagnostic the exact assembled derivative is used instead. This separate initial equation avoids the undefined expression \(A_i^0\). Formula~\eqref{eq:block-adjoint-structure} follows by collecting every residual in which \(c_i^r\) occurs: its own residual contributes \((A_i^r)^\top P_i^r\); the next propagated residual contributes \(-M^\top P_i^{r+1}\) when propagation is present; and every later phase average containing \(c_i^r\) contributes the corresponding weighted term. Since all such later adjoints are already known in a backward sweep, the equation determines \(P_i^r\) uniquely. The spatial matrices are symmetric, so \((A_i^r)^\top=A_i^r\), but the full time operator is not symmetric because propagation, replacement, reset and averaging have different transposes.

\subsection{Transpose of the phase averages}\label{subsec:average-transpose-correction}

If a residual contains
\[
 -M\left(\beta\frac1m\sum_{r=a}^{b-1}c_i^r\right),
\]
then each participating adjoint block receives
\begin{equation}\label{eq:averaging-transpose}
 -\frac{\beta}{m}M^\top P_j^n.
\end{equation}
The denominator must therefore be identical in the state, linearized and adjoint implementations.

\subsection{Explicit adjoint formula for the gradient}\label{subsec:explicit-adjoint-gradient}

Only the matrices \(A_i^n(u)\) depend explicitly on the control. Hence
\begin{equation}\label{eq:adjoint-directional-gradient}
 j'(u)h=\Psi'(u)h
 -\sum_{i=1}^3\sum_{n=1}^N
 \Delta g_i^n(P_i^n)^\top M(\sigma_i^n\delta a_h^n)c_i^n.
\end{equation}
For a scalar component \(u_\ell\),
\begin{equation}\label{eq:component-gradient}
 \frac{\partial j}{\partial u_\ell}(u)
 =\frac{\partial\Psi}{\partial u_\ell}(u)
 -\sum_{i=1}^3\sum_{n=1}^N
 \Delta g_i^n(P_i^n)^\top
 M\left(\sigma_i^n\frac{\partial a_u^n}{\partial u_\ell}\right)c_i^n.
\end{equation}
The kernel derivatives are
\begin{align}
 \frac{\partial a_u^n}{\partial\tau_k}(x)
 &=E_M\,\partial_{\tau_k}\delta_{T_M}^{1D}(s_n-\tau_k)
 \delta_{R_M}^{2D}(x-z_k),\label{eq:kernel-derivative-tau}\\
 \frac{\partial a_u^n}{\partial x_k}(x)
 &=E_M\,\delta_{T_M}^{1D}(s_n-\tau_k)
 \partial_{x_k}\delta_{R_M}^{2D}(x-z_k),\label{eq:kernel-derivative-x}\\
 \frac{\partial a_u^n}{\partial y_k}(x)
 &=E_M\,\delta_{T_M}^{1D}(s_n-\tau_k)
 \partial_{y_k}\delta_{R_M}^{2D}(x-z_k).\label{eq:kernel-derivative-y}
\end{align}
The derivatives are taken with respect to the centres, including the corresponding sign convention used by the implementation.

\begin{proposition}[Equivalence of the two gradient evaluations]\label{prop:direct-adjoint-equivalence}
For every direction \(h\), the derivative computed with the linearized state equation coincides algebraically with \eqref{eq:adjoint-directional-gradient}.
\end{proposition}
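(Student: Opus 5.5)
The argument is a finite-dimensional transposition identity; I would work entirely with the algebraic residual $\calF$ and avoid any componentwise manipulation of the event data. Fix $u$ in the neighbourhood of Proposition~\ref{prop:discrete-state-wellposed}, let $C=S(u)$ and let $h\in\R^{3N_T}$. By Proposition~\ref{prop:linearized-state-wellposed}, the sensitivity $Z=S'(u)h$ exists uniquely and satisfies \eqref{eq:linearized-state}. The direct-sensitivity derivative is \eqref{eq:direct-directional-gradient-general}. By \eqref{eq:adjoint-algebraic}, the adjoint $P$ exists uniquely, since $\calF_C$ is nonsingular and block lower triangular.

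\textbf{Step 1: replace the state term.} I would write
\[
D_C\calJ(C,u)[Z]=\nabla_C\calJ^\top Z=(\calF_C^\top P)^\top Z=P^\top\calF_C Z .
\]
Then I substitute $\calF_C Z=-\calF_u h$ from \eqref{eq:linearized-state}. This gives \eqref{eq:reduced-gradient-adjoint}, and every step is an exact algebraic identity with no approximation.

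\textbf{Step 2: identify the two explicit $u$-dependent terms.} First, $D_u\calJ[h]=\Psi'(u)h$, because $\Phi$ depends only on $C$. Second, for $\calF_u h$:
\begin{itemize}
\item The initial residuals \eqref{eq:initial-residuals} do not depend on $u$.
\item The data $b_i^n(C)$ in \eqref{eq:event-datum-b1}--\eqref{eq:event-datum-b3} depend only on $C$, including the averages \eqref{eq:discrete-average}, whose weights $1/m_k^{a,b}$ are control independent.
\item The increments $\Delta g_i^n$, the masks $\sigma_i^n$ and the matrices $M,K$ are fixed.
\end{itemize}
Hence the only contribution comes from $A_i^n(u)$. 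Using \eqref{eq:discrete-matrix-variation},
\[
(\calF_u h)_i^n=\Delta g_i^n\,M(\sigma_i^n\delta a_h^n)\,c_i^n, \qquad n\ge1 .
\]
Pairing this blockwise with $P_i^n$ and summing over $i$ and $n$ yields \eqref{eq:adjoint-directional-gradient}.

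\textbf{Step 3: make the equivalence componentwise.} Taking $h=e_\ell$ and using linearity of $h\mapsto\delta a_h^n$ gives \eqref{eq:component-gradient}, so the direct formula \eqref{eq:direct-gradient-component} and the adjoint formula agree entry by entry. In the Galicia case, the lumped matrices and the normalized kernels only change the concrete form of $D_uA_i^n$. I would note that the argument uses nothing beyond $C^1$ dependence of $A_i^n$ on $u$ and $u$-independence of $b_i^n$.

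\textbf{Main obstacle.} The main obstacle is making sure that the $P$ defined by the blockwise backward recursion \eqref{eq:block-adjoint-structure}--\eqref{eq:block-adjoint-initial} really solves \eqref{eq:adjoint-algebraic}. That is, the collected terms must be exactly the columns of $\calF_C$:
\begin{itemize}
\item the diagonal blocks $A_i^r$;
\item $-M$ from propagation, appearing only when $\chi^{\rm self}_{i,r}=1$;
\item $-\beta M/m_k^{a,b}$ from every average containing $c_i^r$.
\end{itemize}
I would verify this by computing $\partial\calF_j^n/\partial c_i^r$ case by case from \eqref{eq:event-datum-b1}--\eqref{eq:event-datum-b3}: ordinary propagation, additive events, replacements and resets. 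In particular, replacements and resets contribute no self term. Uniqueness of the solution of the block triangular transpose system then closes the argument.
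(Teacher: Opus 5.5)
Your proposal is correct and follows essentially the paper's proof: use $\calF_C^\top P=\nabla_C\calJ$ to write $D_C\calJ[Z]=P^\top\calF_C Z$, substitute $\calF_C Z=-\calF_u h$ from the linearized equation, and note that the control enters only through $A_i^n(u)c_i^n$, so that $(\calF_u h)_i^n=\Delta g_i^n M(\sigma_i^n\delta a_h^n)c_i^n$. The blockwise check you call the main obstacle is not needed for the proposition itself, because $P$ is defined by \eqref{eq:adjoint-algebraic}; it is a separate consistency check of the backward recursion, which the paper handles in Subsection~\ref{subsec:blockwise-adjoint}.
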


\begin{proof}
Let $Z=S'(u)h$ be the unique solution of~\eqref{eq:linearized-state}. The adjoint equation implies
\[
 D_C\calJ(C,u)[Z]
 =\nabla_C\calJ(C,u)^\top Z
 =P^\top \calF_C(C,u)Z.
\]
Substituting $\calF_CZ=-\calF_uh$ gives
\[
 D_C\calJ(C,u)[Z]=-P^\top \calF_u(C,u)h.
\]
Adding the explicit control derivative $D_u\calJ(C,u)[h]=\Psi'(u)h$ proves~\eqref{eq:reduced-gradient-adjoint}. In the present residual, control dependence occurs only in
\[
 A_i^n(u)c_i^n
 =\left[M+\Delta g_i^n\bigl(\nu_iK+M(\mu_i+\sigma_i^na_u^n)\bigr)\right]c_i^n.
\]
Therefore
\[
 \calF_u(C,u)h
 =\left(\Delta g_i^nM(\sigma_i^n\delta a_h^n)c_i^n\right)_{i,n},
\]
and insertion into~\eqref{eq:reduced-gradient-adjoint} yields~\eqref{eq:adjoint-directional-gradient}. Hence the direct and adjoint evaluations are algebraically identical; their numerical comparison tests the implementations rather than two different mathematical derivatives.
\end{proof}

\subsection{Computational implication of the adjoint identity}\label{subsec:adjoint-computational-implication}

After the common state solve, the gradient obtained from the linearized state equations requires one solve per scalar control component, namely \(3N_T\) solves. The adjoint approach requires one backward transposed solve and then \(3N_T\) inexpensive kernel contractions:
\[
\begin{array}{c|c}
\text{gradients from linearized state equations} & 3N_T\ \text{linearized solves}\\
\text{adjoint gradient} & 1\ \text{adjoint solve}+3N_T\ \text{kernel contractions}.
\end{array}
\]
This operation count does not replace a timing experiment. The measured comparison is reported in Section~\ref{sec:academic-validation}, and all optimization runs use the adjoint state to provide the reduced gradient to IPOPT.

\section{Academic numerical validation framework}\label{sec:academic-validation}

This section introduces the academic benchmark used to validate the numerical differentiation and optimization procedures before passing to the realistic Galicia domain. It is not a repetition of the algebraic derivation above. The role of the previous sections is to derive the state equation, the linearized state equation and the adjoint gradient. The role of the present section is experimental: we verify that the implemented gradient satisfies those identities and we measure the computational cost of the two gradient-evaluation strategies in the actual FreeFEM++ code~\cite{Hecht2012FreeFEM}.

In contrast with the preliminary synthetic benchmark, the present one is explicitly aligned with the biological calendar of the published Stieltjes-time \textit{Vespa velutina} model. The aim is still academic: the parameters are not calibrated to field data, but the clocks, impulses and resets reproduce the structure of one biological generation. The validation is organized in three steps: description of the benchmark, correctness of the gradient, and timing of direct versus adjoint gradient evaluations.

\subsection{Computational domain, time grid and finite element space}

The spatial domain is the rectangle
\[
 \Omega=(0,40)\times(0,24)\subset\mathbb R^2,
\]
measured in kilometres. It is discretized by a structured triangular mesh obtained from a \(48\times 30\) rectangular grid and continuous piecewise linear finite elements. The time interval is
\[
 I=[0,180]\quad\text{days},
\]
with
\[
 N_t=60,\qquad \Delta t=3\quad\text{days}.
\]
The endpoints of the rescaled one-generation biological calendar are
\begin{equation}\label{eq:academic-calendar-times}
 s_0=0,\qquad s_1=30,\qquad s_2=45,\qquad s_3=105,
 \qquad s_4=135,
 \qquad s_5=180.
\end{equation}
The phase lengths follow the proportions
\[
 2:1:4:2:3,
\]
corresponding respectively to foundress-queen dispersal, primary nest activity, secondary nest activity, future-foundress activity/dispersal, and hibernation.

The three compartments are foundress queens, future foundress queens, and workers/males. The benchmark uses homogeneous Neumann boundary conditions and a localized Gaussian initial condition,
\[
 y_1(0,x)=80\exp\left(-\frac{|x-(8,12)|^2}{2\,2^2}\right),
 \qquad y_2(0,x)=y_3(0,x)=0.
\]
The diffusion, mortality, trapping and cost parameters used in the FreeFEM++ implementation are reported in Table~\ref{tab:academic-parameters}. They should be regarded as benchmark parameters rather than biological calibration values.

\begin{table}[htbp]
\centering
\begin{tabular}{lll}
\toprule
Quantity & Value & Meaning \\
\midrule
\(\nu_1,\nu_2,\nu_3\) & \(0.030,0.020,0.055\) km$^2$/day & diffusion coefficients \\
\(\mu_1,\mu_2,\mu_3\) & \(0.006,0.004,0.010\) day$^{-1}$ & natural mortality rates \\
\(w_1,w_2,w_3\) & \(0.2,1.0,0.6\) & dimensionless quadratic observation weights \\
\(E_M\) & \(15\) km$^2$ & integrated trap-intensity scale \\
\(T_M\) & \(10\) days & temporal support radius of a trap \\
\(R_M\) & \(4\) km & spatial support radius of the academic trapping kernel \\
\bottomrule
\end{tabular}
\caption{Main parameters of the academic benchmark.}
\label{tab:academic-parameters}
\end{table}

\subsection{Academic derivators aligned with the biological cycle}

The continuous parts of the derivators follow the original construction: only the foundress-queen clock is frozen during hibernation. Thus,
\[
 g_1^C(t)=\min\{t,s_4\},
 \qquad
 g_2^C(t)=t,
 \qquad
 g_3^C(t)=t.
\]
The jump parts are chosen according to the biological events of one generation:
\begin{align*}
 g_1^B(t)&=\chi_{(s_4,\infty)}(t),\\
 g_2^B(t)&=\chi_{(s_3,\infty)}(t)+\chi_{(s_4,\infty)}(t),\\
 g_3^B(t)&=\chi_{(s_1,\infty)}(t)+\chi_{(s_2,\infty)}(t)
 +\chi_{(s_3,\infty)}(t)+\chi_{(s_4,\infty)}(t).
\end{align*}
Hence
\[
 g_i=g_i^C+g_i^B,
 \qquad i=1,2,3.
\]
The atom of \(g_1\) at \(s_4\) represents the renewal of foundress queens from the surviving future foundress queens. The atoms of \(g_2\) represent production and reset of future foundress queens. The atoms of \(g_3\) represent production of workers/males and colony death. Figure~\ref{fig:academic-derivators} makes this decomposition explicit in the same visual format later used for the Galicia multi-generation clocks, so that flat periods, linear growth intervals and atomic transfers can be compared directly.

\begin{figure}[htbp]
\centering
\includegraphics[width=0.96\linewidth]{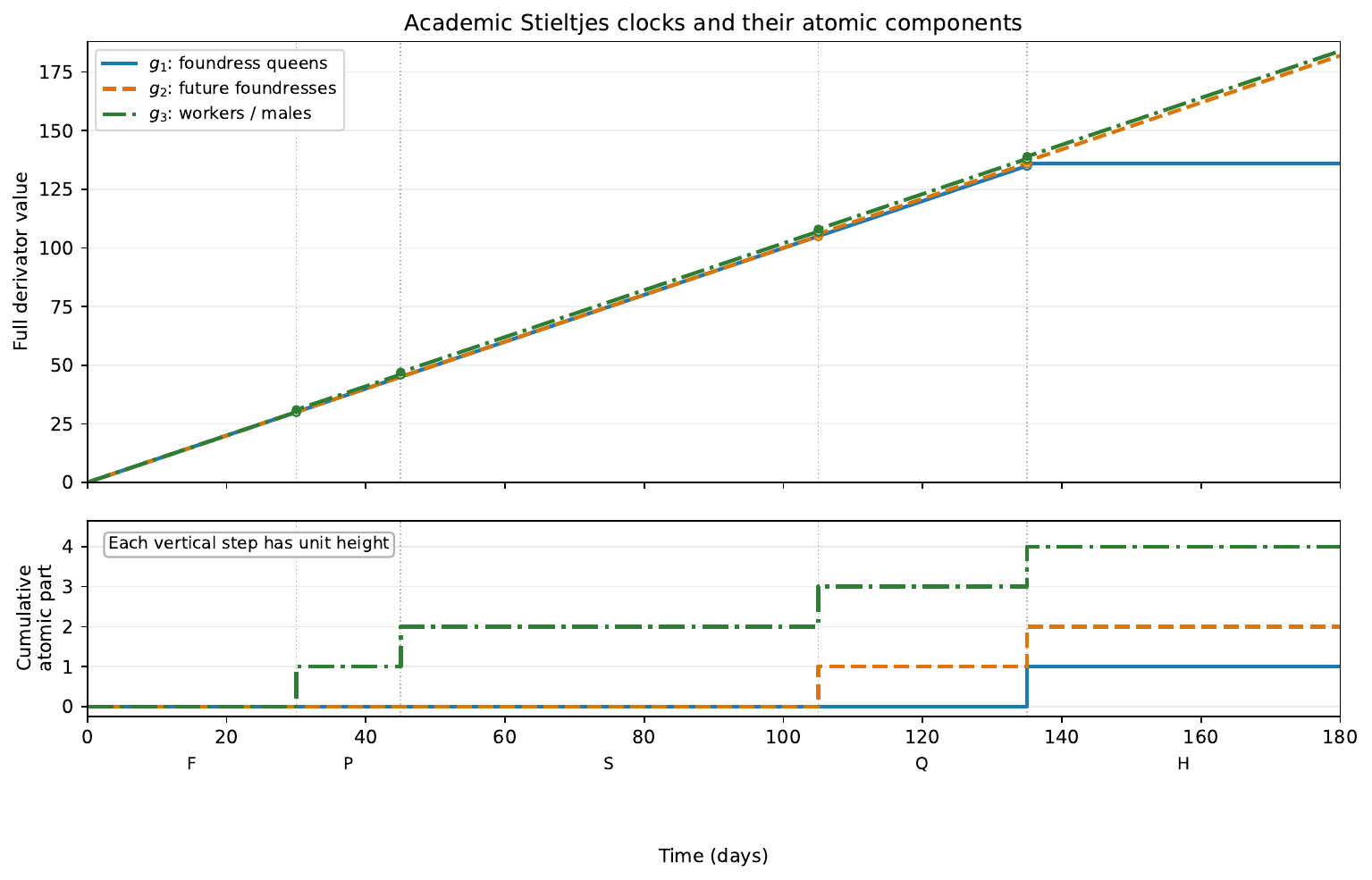}
\caption{Academic derivators in the same two-panel format used later for the Galicia clocks. The upper panel shows the full one-generation Stieltjes clocks $g_1$, $g_2$ and $g_3$, including their continuous parts, flat portions and unit jumps. The lower panel isolates the cumulative atomic components, so that each biological transfer is displayed at its true unit height and the zones of constancy between jumps are clearly visible. The letters F, P, S, Q and H denote foundress dispersal, primary-nest activity, secondary-nest activity, future-foundress activity/dispersal and hibernation, respectively. In the upper panel, open and filled markers indicate the left and right values at each jump.}
\label{fig:academic-derivators}
\end{figure}

On the grid, the Stieltjes increments used in the scheme are
\[
 \Delta g_i^n=g_i(t_n^+)-g_i(t_{n-1}^+),
 \qquad t_n=n\Delta t.
\]
Thus zero increments freeze the foundress-queen equation during hibernation, while atomic biological events are incorporated in the corresponding Stieltjes steps.

\subsection{Biological transfer terms in the academic benchmark}

The benchmark implements the following averaged transitions:
\begin{align*}
 s_1:
 &\qquad y_3(s_1^+)=\beta_2\,\langle y_1\rangle_{g_1,[s_0,s_1)},\\
 s_2:
 &\qquad y_3(s_2^+)=y_3(s_2)+\beta_3\,\langle y_1\rangle_{g_1,[s_1,s_2)},\\
 s_3:
 &\qquad y_2(s_3^+)=\beta_4\,\langle y_1\rangle_{g_1,[s_2,s_3)},\\
 &\qquad y_3(s_3^+)=y_3(s_3)+\beta_5\,\langle y_1\rangle_{g_1,[s_2,s_3)},\\
 s_4:
 &\qquad y_1(s_4^+)=\beta_1\,\langle y_2\rangle_{g_2,[s_3,s_4)},\\
 &\qquad y_2(s_4^+)=0,
 \qquad y_3(s_4^+)=0.
\end{align*}
In the FreeFEM++ benchmark the coefficients are
\[
 \beta_1=0.65,
 \qquad \beta_2=8.00,
 \qquad \beta_3=25.00,
 \qquad \beta_4=2.50,
 \qquad \beta_5=6.00.
\]
These values are deliberately moderate so that the benchmark remains numerically stable and useful for validation.

\subsection{Control configuration and cost functional}

The validation test uses two traps,
\[
 u=(\tau_1,\tau_2,x_1,x_2,y_1,y_2)\in\mathbb R^6.
\]
The reference control used in the gradient test is
\[
 u=(36,96,13,18,11,12),
\]
where times are measured in days and spatial coordinates in kilometres. The cost used for validation is
\begin{equation}\label{eq:academic-cost}
 J_h(C)=\frac12\sum_{n=1}^{N_t}\left(
 w_1\Delta g_1^n(c_1^n)^\top Mc_1^n
 +w_2\Delta g_2^n(c_2^n)^\top Mc_2^n
 +w_3\Delta g_3^n(c_3^n)^\top Mc_3^n
 \right).
\end{equation}
No explicit penalty on the control is included in the first validation test, so that the test isolates the differentiation of the controlled mortality and of the state equation.

\subsection{Gradient validation results}\label{subsec:gradient-validation-protocol}

The FreeFEM++ benchmark was evaluated with the biological calendar described above. The objective value at the reference control is
\[
 J_h(u)= \num{3.189955e+08}.
\]
The purpose of the test is twofold. First, we verify that the derivative obtained by solving the linearized state equation coincides with the derivative obtained from the discrete adjoint system. Second, we verify both values against forward finite differences.

For the reference direction
\[
 h=(1,-0.7,0.25,-0.40,-0.30,0.20),
\]
the two derivative evaluations give
\[
 J_h'(u)h\big|_{\rm lin}=\num{4.805658e+06},
 \qquad
 J_h'(u)h\big|_{\rm adj}=\num{4.805658e+06}.
\]
Their absolute difference is \num{2.793968e-09}, corresponding to a relative discrepancy of \num{5.813914e-16}. This agreement, at essentially machine precision, confirms that the discrete adjoint equation is the transpose of the implemented linearized state system.

The directional finite-difference quotient
\[
 FD(\varepsilon)=\frac{J_h(u+\varepsilon h)-J_h(u)}{\varepsilon}
\]
is reported in Table~\ref{tab:directional-gradient-validation}. The error with respect to the adjoint derivative decays linearly with \(\varepsilon\), whereas the last column remains essentially constant. Hence
\[
 J_h(u+\varepsilon h)
 =
 J_h(u)+\varepsilon J_h'(u)h+O(\varepsilon^2),
\]
as expected.

\begin{table}[htbp]
\centering
\scriptsize
\begin{tabular}{r r r r}
\toprule
$\varepsilon$ & $FD(\varepsilon)$ & $|FD(\varepsilon)-J_h'(u)h_{\rm adj}|$ & $|FD-J_h'h|/\varepsilon$\\
\midrule
\num{1.000000e-01} & \num{4.688695e+06} & \num{1.169622e+05} & \num{1.169622e+06}\\
\num{5.000000e-02} & \num{4.747033e+06} & \num{5.862471e+04} & \num{1.172494e+06}\\
\num{1.000000e-02} & \num{4.793932e+06} & \num{1.172569e+04} & \num{1.172569e+06}\\
\num{5.000000e-03} & \num{4.799795e+06} & \num{5.862184e+03} & \num{1.172437e+06}\\
\num{1.000000e-03} & \num{4.804485e+06} & \num{1.172309e+03} & \num{1.172309e+06}\\
\num{5.000000e-04} & \num{4.805071e+06} & \num{5.861457e+02} & \num{1.172291e+06}\\
\num{1.000000e-04} & \num{4.805540e+06} & \num{1.172276e+02} & \num{1.172276e+06}\\
\num{5.000000e-05} & \num{4.805599e+06} & \num{5.861775e+01} & \num{1.172355e+06}\\
\bottomrule
\end{tabular}
\caption{Directional finite-difference validation for the academic benchmark.}
\label{tab:directional-gradient-validation}
\end{table}

We also computed the six components of the reduced gradient. For each component \(u_\ell\), we compare the derivative obtained by the linearized equation, by the adjoint equation, and by the forward finite-difference quotient
\[
 FD_\ell(\varepsilon)=
 \frac{J_h(u+\varepsilon e_\ell)-J_h(u)}{\varepsilon}.
\]
Table~\ref{tab:componentwise-gradient-validation} reports the finite-difference value for the finest step,
\[
 \varepsilon_{\min}=\num{5.000000e-05}.
\]
The agreement between \(g_{\ell,\rm lin}\) and \(g_{\ell,\rm adj}\) is again at machine precision. The finite-difference values are not expected to match to machine precision, because they contain a first-order truncation error; nevertheless they converge to the adjoint gradient as \(\varepsilon\to0\).

\begin{table}[htbp]
\centering
\tiny
\begin{tabular}{c r r r r r r}
\toprule
Component & $u_\ell$ & $g_{\ell,\rm lin}$ & $g_{\ell,\rm adj}$ & $FD_\ell(\varepsilon_{\min})$ & $|g_{\ell,\rm lin}-g_{\ell,\rm adj}|$ & $|FD_\ell-g_{\ell,\rm adj}|$\\
\midrule
$\tau_{1}$ & \num{3.600000e+01} & \num{5.066595e+05} & \num{5.066595e+05} & \num{5.066127e+05} & \num{1.804437e-09} & \num{4.679883e+01}\\
$\tau_{2}$ & \num{9.600000e+01} & \num{5.325778e+02} & \num{5.325778e+02} & \num{5.324519e+02} & \num{7.844392e-12} & \num{1.259770e-01}\\
$x_{1}$ & \num{1.300000e+01} & \num{1.348043e+07} & \num{1.348043e+07} & \num{1.348042e+07} & \num{3.725290e-09} & \num{7.805422e+00}\\
$x_{2}$ & \num{1.800000e+01} & \num{6.654177e+04} & \num{6.654177e+04} & \num{6.654158e+04} & \num{1.600711e-10} & \num{1.914204e-01}\\
$y_{1}$ & \num{1.100000e+01} & \num{-3.187700e+06} & \num{-3.187700e+06} & \num{-3.187651e+06} & \num{4.190952e-09} & \num{4.960814e+01}\\
$y_{2}$ & \num{1.200000e+01} & \num{-2.145233e+03} & \num{-2.145233e+03} & \num{-2.144893e+03} & \num{2.273737e-12} & \num{3.393230e-01}\\
\bottomrule
\end{tabular}
\caption{Componentwise gradient validation for the academic benchmark. The control ordering is \(u=(\tau_1,\tau_2,x_1,x_2,y_1,y_2)\).}
\label{tab:componentwise-gradient-validation}
\end{table}

Finally, Figure~\ref{fig:componentwise-fd-errors} displays the finite-difference errors
\[
 e_\ell(\varepsilon)=|FD_\ell(\varepsilon)-g_{\ell,{\rm adj}}|
\]
in log-log scale. The nearly parallel first-order decay of all curves is the expected behaviour for a forward finite-difference quotient and provides an experimental validation of the adjoint gradient formula.

\begin{figure}[htbp]
\centering
\begin{tikzpicture}
\begin{loglogaxis}[
    width=0.92\textwidth,
    height=0.56\textwidth,
    xlabel={Finite-difference step $\varepsilon$},
    ylabel={$|FD_\ell(\varepsilon)-g_{\ell,\rm adj}|$},
    grid=both,
    legend style={at={(0.02,0.98)},anchor=north west,font=\scriptsize},
    legend columns=2,
    xmin=4e-5,
    xmax=1.4e-1,
    ymin=1e-1,
    ymax=2e5,
]
\addplot+[mark=*, mark size=1pt] coordinates {(5.000000e-05,4.679883e+01) (1.000000e-04,9.358967e+01) (5.000000e-04,4.679558e+02) (1.000000e-03,9.359253e+02) (5.000000e-03,4.680200e+03) (1.000000e-02,9.361531e+03) (5.000000e-02,4.679219e+04) (1.000000e-01,9.324674e+04)};
\addlegendentry{$\tau_{1}$}
\addplot+[mark=*, mark size=1pt] coordinates {(5.000000e-05,1.259770e-01) (1.000000e-04,2.481665e-01) (5.000000e-04,1.230928e+00) (1.000000e-03,2.460989e+00) (5.000000e-03,1.230493e+01) (1.000000e-02,2.460845e+01) (5.000000e-02,1.228134e+02) (1.000000e-01,2.441990e+02)};
\addlegendentry{$\tau_{2}$}
\addplot+[mark=*, mark size=1pt] coordinates {(5.000000e-05,7.805422e+00) (1.000000e-04,1.561959e+01) (5.000000e-04,7.854421e+01) (1.000000e-03,1.582182e+02) (5.000000e-03,8.384593e+02) (1.000000e-02,1.805566e+03) (5.000000e-02,1.579450e+04) (1.000000e-01,5.321769e+04)};
\addlegendentry{$x_{1}$}
\addplot+[mark=*, mark size=1pt] coordinates {(5.000000e-05,1.914204e-01) (1.000000e-04,3.809632e-01) (5.000000e-04,1.896828e+00) (1.000000e-03,3.794700e+00) (5.000000e-03,1.907293e+01) (1.000000e-02,3.850271e+01) (5.000000e-02,2.250773e+02) (1.000000e-01,5.906643e+02)};
\addlegendentry{$x_{2}$}
\addplot+[mark=*, mark size=1pt] coordinates {(5.000000e-05,4.960814e+01) (1.000000e-04,9.922484e+01) (5.000000e-04,4.964545e+02) (1.000000e-03,9.936615e+02) (5.000000e-03,4.998543e+03) (1.000000e-02,1.007334e+04) (5.000000e-02,5.353559e+04) (1.000000e-01,1.154117e+05)};
\addlegendentry{$y_{1}$}
\addplot+[mark=*, mark size=1pt] coordinates {(5.000000e-05,3.393230e-01) (1.000000e-04,6.820497e-01) (5.000000e-04,3.419691e+00) (1.000000e-03,6.839984e+00) (5.000000e-03,3.420727e+01) (1.000000e-02,6.842098e+01) (5.000000e-02,3.401046e+02) (1.000000e-01,6.643773e+02)};
\addlegendentry{$y_{2}$}
\addplot[black, very thick, densely dashed, mark=none] coordinates {(5e-5,2e-1) (1e-1,4e2)};
\addlegendentry{reference slope $1$}
\node[font=\scriptsize, anchor=south west] at (axis cs:2e-3,1.6e1) {slope $1$};
\end{loglogaxis}
\end{tikzpicture}
\caption{Componentwise finite-difference validation of the discrete adjoint gradient. The dashed black line has slope one in log--log scale. The observed decay is consistent with \( |FD_\ell(\varepsilon)-g_{\ell,\rm adj}|=O(\varepsilon) \), and therefore with a second-order Taylor remainder.}
\label{fig:componentwise-fd-errors}
\end{figure}

\subsection{Timing comparison for gradient evaluations}\label{subsec:gradient-timing}

The previous subsection validates the \emph{correctness} of the gradient. A separate timing benchmark assesses the \emph{computational usefulness} of the adjoint formulation in the actual FreeFEM++ implementation. This test was carried out independently of a full IPOPT run, because a complete optimization mixes gradient evaluations with objective evaluations, trial points generated by the line search and stopping criteria. Such a run is useful to solve the control problem, but it is not the cleanest way to measure the cost of a single gradient evaluation.

For a fixed mesh, time grid, number of traps and control vector \(u\), the diagnostic benchmark computes
\[
 \nabla j_{\rm adj}(u)
 \qquad\text{and}\qquad
 \nabla j_{\rm dir}(u),
\]
where \(\nabla j_{\rm adj}\) is obtained from the discrete adjoint state and \(\nabla j_{\rm dir}\) is obtained by solving one linearized state equation per scalar control component. Both routines use the same state solver and the same trapping field. The recorded quantities are
\[
 T_{\rm adj},\qquad T_{\rm dir},\qquad
 \frac{T_{\rm dir}}{T_{\rm adj}},
\]
together with the discrepancy between the two gradient vectors. The last quantity is essential: a speed-up is meaningful only if both routines return the same gradient up to the tolerances already observed in the validation test.

The adjoint implementation used in this benchmark solves the state and adjoint equations once and then evaluates all \(3N_T\) scalar gradient components by direct contractions with the temporal and spatial derivatives of the trapping kernels. Thus, the timings below correspond to the adjoint gradient routine used in the optimization experiments.

Table~\ref{tab:gradient-timing-battery} reports the timing battery for two representative control vectors at each value of $N_T$: a reference vector and the initial vector used in the corresponding optimization test. Because their dimensions change with $N_T$, objective values in different $N_T$ rows are not evaluations of one common control and should not be compared as such. The gradient discrepancy is at the level of round-off error in all cases. In fact, the largest relative difference between the direct and adjoint gradients is
\[
 2.51\cdot 10^{-15},
\]
and the largest componentwise absolute difference is below \(6.4\cdot 10^{-8}\). Thus, the timing battery provides an additional, independent verification that the adjoint routine and the gradients obtained from the linearized state equations compute the same discrete reduced gradient.

\begin{table}[htbp]
\centering
\scriptsize
\begin{tabular}{llrrrrr}
\toprule
Control vector & \(N_T\) & \(\dim u\) & \(J(u)\) & \(T_{\rm adj}\) (s) & \(T_{\rm dir}\) (s) & \(T_{\rm dir}/T_{\rm adj}\) \\
\midrule
Reference & 1 & 3  & \(2.62349\cdot 10^8\) & 18.63 & 139.14 & 7.47 \\
Initial   & 1 & 3  & \(3.06399\cdot 10^8\) & 18.38 & 147.06 & 8.00 \\
Reference & 2 & 6  & \(2.62344\cdot 10^8\) & 34.63 & 622.42 & 17.98 \\
Initial   & 2 & 6  & \(3.06382\cdot 10^8\) & 39.86 & 618.01 & 15.51 \\
Reference & 3 & 9  & \(2.60484\cdot 10^8\) & 59.07 & 1695.71 & 28.71 \\
Initial   & 3 & 9  & \(3.05200\cdot 10^8\) & 71.19 & 1388.41 & 19.50 \\
Reference & 4 & 12 & \(2.53150\cdot 10^8\) & 81.29 & 2487.19 & 30.59 \\
Initial   & 4 & 12 & \(3.01951\cdot 10^8\) & 79.99 & 2467.54 & 30.85 \\
\bottomrule
\end{tabular}
\caption{Measured timing comparison between the adjoint gradient evaluation and the direct linearized-state evaluation. The reported speed-up is \(T_{\rm dir}/T_{\rm adj}\); values above one favour the adjoint implementation. For all rows, the relative norm difference between the two gradient vectors is below \(2.51\cdot10^{-15}\).}
\label{tab:gradient-timing-battery}
\end{table}

Figure~\ref{fig:gradient-timing-battery} plots the same speed-up values against the number of traps. The measured behaviour agrees with the algebraic operation count of Subsection~\ref{subsec:adjoint-computational-implication}: the direct approach becomes increasingly expensive because it requires one linearized Stieltjes--parabolic solve for each scalar control component, whereas the adjoint approach reuses a single adjoint state to evaluate all components. The observed speed-up ranges from about \(7.5\) to \(8.0\) for one trap and reaches about \(30.6\) to \(30.9\) for four traps.

\begin{figure}[htbp]
\centering
\begin{tikzpicture}
\begin{axis}[
    width=0.82\textwidth,
    height=0.48\textwidth,
    xlabel={Number of traps $N_T$},
    ylabel={Measured ratio $T_{\rm dir}/T_{\rm adj}$},
    grid=both,
    xmin=0.8, xmax=4.2,
    ymin=0, ymax=34,
    xtick={1,2,3,4},
    legend style={at={(0.02,0.98)},anchor=north west,font=\small},
]
\addplot+[mark=*, thick] coordinates {(1,7.4676) (2,17.9760) (3,28.7086) (4,30.5948)};
\addlegendentry{Reference control}
\addplot+[mark=square*, thick] coordinates {(1,7.9993) (2,15.5054) (3,19.5022) (4,30.8494)};
\addlegendentry{Optimization initial control}
\addplot[black, dashed, thick, mark=none] coordinates {(0.8,1) (4.2,1)};
\addlegendentry{Equal time}
\end{axis}
\end{tikzpicture}
\caption{Measured speed-up of the direct linearized-state gradient evaluation relative to the adjoint gradient evaluation. The adjoint gradient remains numerically indistinguishable from the gradient obtained from the linearized state equations, while its computational advantage increases with the number of traps.}
\label{fig:gradient-timing-battery}
\end{figure}

The conclusion of the timing benchmark is therefore twofold. First, the adjoint equation is validated by an independent timing test because it reproduces the gradients obtained from the linearized state equations with relative discrepancies of order \(10^{-15}\). Second, the measured cost confirms the practical advantage of the adjoint formulation for multi-trap optimization. Averaging the two controls tested for each dimension gives speed-ups of approximately \(7.73\), \(16.74\), \(24.11\), and \(30.72\) for \(N_T=1,2,3,4\), respectively. Consequently, all optimization experiments use the discrete adjoint state to evaluate the reduced gradient supplied to IPOPT.

The optimization driver records IPOPT's convergence diagnostics. It also evaluates the projected residual \eqref{eq:projected-gradient-residual} and a scale-normalized version. These quantities are used jointly because a substantial reduction of \(J\) alone is not an optimality certificate.

\FloatBarrier
\section{Numerical optimization experiments}\label{sec:numerical-optimization}

The validated adjoint gradient was then supplied to IPOPT~\cite{WachterBiegler2006IPOPT}, following the standard reduced-space logic of PDE-constrained optimization~\cite{HinzeEtAl2009,NocedalWright2006}. We performed a multi-start battery of box-constrained runs with
\[
 N_T=1,2,3,4.
\]
All cases use the mesh, time grid, biological calendar and parameters of Section~\ref{sec:academic-validation}. The control ordering is
\[
 u=(\tau_1,\ldots,\tau_{N_T},x_1,\ldots,x_{N_T},y_1,\ldots,y_{N_T}),
\]
and the reduced gradient supplied to IPOPT is computed through the discrete adjoint state. The box constraints are
\[
 10\leq \tau_k\leq135,\qquad 4\leq x_k\leq36,\qquad 4\leq y_k\leq20.
\]
The uncontrolled reference values are
\begin{equation}\label{eq:optimization-uncontrolled-reference}
 J_{\rm no\ trap}=3.366833739480674\times 10^8,
 \qquad
 M_{\rm final}^{\rm no\ trap}=1933.704130426528,
\end{equation}
where
\[
 M_{\rm final}=\int_\Omega
 \bigl(y_1(s_5,x)+y_2(s_5,x)+y_3(s_5,x)\bigr)\,\dd x.
\]
No installation cost or explicit penalty on the number of traps is included. Consequently, comparisons across $N_T$ quantify the biological effect of allowing more fixed-intensity devices and do not constitute an economic cost--benefit analysis.

\subsection{Multi-start design and convergence}

For each $N_T$, eight prescribed initial controls were considered. Their identifiers are
\begin{center}
\small
\texttt{default}, \texttt{biological}, \texttt{early\_focus}, \texttt{late\_focus},\\
\texttt{uniform\_near\_focus}, \texttt{uniform\_wide},
\texttt{random\_focus\_06}, and \texttt{random\_wide\_07}.
\end{center}
They cover early and late activation patterns, clustered and spatially spread centres, and two reproducible random configurations. The purpose is not to certify global optimality, but to assess sensitivity to initialization and identify repeatedly attained good local solutions.

Of the 32 initial executions, 22 satisfied IPOPT's termination criterion within the original iteration allowance. The ten unfinished cases were restarted from their last saved controls with a maximum of 200 iterations. All ten restarts then terminated with \texttt{Optimal Solution Found}, so every initial control has a final representative with successful IPOPT termination. The canonical analysis was reconstructed from the final \texttt{multistart\_result.dat} file of each run. This avoids dependence on intermediate aggregate summaries.

Besides IPOPT's termination status, the driver evaluates $R_1^{\rm phys}$ and $\widehat R_1^{\rm phys}$ from \eqref{eq:projected-gradient-residual}. The displayed values are therefore unit-dependent physical-coordinate diagnostics, not IPOPT optimality measures.

\subsection{Best values and dependence on initialization}

Table~\ref{tab:ipopt-optimization-results} gives the best objective value found among the eight starts for each $N_T$. The final-mass reduction is a complementary diagnostic and must not be confused with the objective reduction: $J_h$ is accumulated over the biological generation, whereas $M_{\rm final}$ is a single terminal quantity.

\begin{table}[htbp]
\centering
\small
\caption{Best values found among eight prescribed initial controls for each number of traps. The last column reports the external unit-dependent diagnostic $\widehat R_1^{\rm phys}$ from \eqref{eq:projected-gradient-residual}; it is not IPOPT's dual infeasibility.}
\label{tab:ipopt-optimization-results}
\begin{tabular}{c l r r r r}
\toprule
$N_T$ & Best start & $J_h(u^*)$ & $\Delta J$ & $M_{\rm final}$ & $\widehat R_1^{\rm phys}$ \\
\midrule
1 & \texttt{late\_focus}  & $1.797123\times10^8$ & 46.62\% & 1445.223 & $1.28\times10^{-4}$ \\
2 & \texttt{early\_focus} & $1.165302\times10^8$ & 65.39\% & 1161.290 & $3.14\times10^{-3}$ \\
3 & \texttt{early\_focus} & $7.747849\times10^7$ & 76.99\% & 1000.648 & $1.27\times10^{-3}$ \\
4 & \texttt{early\_focus} & $5.852500\times10^7$ & 82.62\% &  883.369 & $2.28\times10^{-3}$ \\
\bottomrule
\end{tabular}
\end{table}

The best multi-start values improve substantially on the solutions obtained from the single default initialization. Relative to the preceding best case, adding the second, third and fourth devices reduces $J_h$ by approximately $35.16\%$, $33.51\%$, and $24.46\%$, respectively. The corresponding marginal reductions of terminal mass are $19.65\%$, $13.83\%$, and $11.72\%$. Thus, the fourth device remains beneficial, but the terminal diagnostic exhibits decreasing marginal returns. Each additional trap also adds the same prescribed mortality intensity, so the monotone improvement across $N_T$ is expected and cannot identify an optimal device count without an explicit deployment cost or shared resource constraint.

The objective values vary markedly across starts. Table~\ref{tab:multistart-dispersion} summarizes this dispersion. Several starts reach values within $5\%$ of the best result, but other starts converge to substantially worse stationary points.

\begin{table}[htbp]
\centering
\small
\caption{Dispersion of the eight successfully terminated multi-start objective values for each $N_T$.}
\label{tab:multistart-dispersion}
\begin{tabular}{c r r r c}
\toprule
$N_T$ & Minimum $J_h$ & Median $J_h$ & Maximum $J_h$ & Starts within $5\%$ \\
\midrule
1 & $1.797123\times10^8$ & $1.932955\times10^8$ & $2.192521\times10^8$ & 3 \\
2 & $1.165302\times10^8$ & $1.550658\times10^8$ & $1.906389\times10^8$ & 3 \\
3 & $7.747849\times10^7$ & $1.070188\times10^8$ & $2.100519\times10^8$ & 3 \\
4 & $5.852500\times10^7$ & $7.633063\times10^7$ & $9.183887\times10^7$ & 3 \\
\bottomrule
\end{tabular}
\end{table}

\begin{figure}[htbp]
\centering
\begin{tikzpicture}
\begin{axis}[
    width=0.88\textwidth,
    height=0.48\textwidth,
    xlabel={Number of traps $N_T$},
    ylabel={$J_h/J_{\rm no\ trap}$},
    xmin=0.65,xmax=4.35,
    ymin=0.14,ymax=0.69,
    xtick={1,2,3,4},
    grid=major,
    legend style={at={(0.98,0.98)},anchor=north east,font=\small},
]
\addplot+[only marks,mark=o,mark size=2pt] table[x=x_jitter,y=J_over_J0] {multistart_objectives_jitter.dat};
\addlegendentry{All terminated starts}
\addplot+[thick,mark=*,mark size=2pt] coordinates {(1,0.53377245) (2,0.34611206) (3,0.23012271) (4,0.17382801)};
\addlegendentry{Best value found}
\end{axis}
\end{tikzpicture}
\caption{Dependence of the final objective on the initial control. Horizontal jitter is used only to separate the eight starts for each $N_T$. The wide vertical dispersion provides numerical evidence of several distinct locally converged objective levels and of nonconvex behaviour; the lower envelope is not a certificate of global optimality.}
\label{fig:multistart-objective-dispersion}
\end{figure}

The repeated attainment of closely related low values is encouraging: three starts are within $5\%$ of the best result for every $N_T$. Nevertheless, the range between the best and worst final values is too large to describe the reduced problem as effectively unimodal. The results must therefore be presented as the best local solutions found by the prescribed battery rather than as certified global optima. In addition, permutation of trap labels leaves the control field unchanged, so controls are ordered by activation time before being compared.

\subsection{Evolution of the total population mass}\label{subsec:total-mass-evolution}

For a control $u$, define the discrete total mass
\begin{equation}\label{eq:discrete-total-mass}
 M^n(u)=\int_\Omega\bigl(c_{1,h}^n(u)+c_{2,h}^n(u)+c_{3,h}^n(u)\bigr)\,\dd x,
 \qquad n=0,\ldots,N.
\end{equation}
The state equation was solved again at the four best controls in Table~\ref{tab:ipopt-optimization-results}. The recomputed terminal masses agree with the canonical optimization outputs to numerical roundoff at the displayed scale.

\begin{figure}[htbp]
\centering
\begin{tikzpicture}
\begin{axis}[
    width=0.96\textwidth,
    height=0.55\textwidth,
    xlabel={Time (days)},
    ylabel={Total mass $M^n$},
    xmin=0,xmax=180,
    ymin=8e2,ymax=6e4,
    ymode=log,
    log basis y=10,
    xtick={0,30,45,60,90,105,120,135,150,180},
    grid=both,
    minor grid style={draw=gray!20},
    unbounded coords=jump,
    legend columns=3,
    legend style={at={(0.5,-0.19)},anchor=north,font=\small,/tikz/every even column/.append style={column sep=0.5em}},
]
\addplot+[thick,mark=none] table[x=time_days,y=no_traps] {mass_evolution_comparison_breaks.dat};
\addlegendentry{No traps}
\addplot+[thick,mark=*,mark repeat=6,mark size=1.6pt] table[x=time_days,y=NT1] {mass_evolution_comparison_breaks.dat};
\addlegendentry{$N_T=1$}
\addplot+[thick,mark=square*,mark repeat=6,mark size=1.5pt] table[x=time_days,y=NT2] {mass_evolution_comparison_breaks.dat};
\addlegendentry{$N_T=2$}
\addplot+[thick,mark=triangle*,mark repeat=6,mark size=1.8pt] table[x=time_days,y=NT3] {mass_evolution_comparison_breaks.dat};
\addlegendentry{$N_T=3$}
\addplot+[thick,mark=diamond*,mark repeat=6,mark size=1.8pt] table[x=time_days,y=NT4] {mass_evolution_comparison_breaks.dat};
\addlegendentry{$N_T=4$}
\addplot[black,densely dashed,thin,forget plot] coordinates {(30,8e2) (30,6e4)};
\addplot[black,densely dashed,thin,forget plot] coordinates {(45,8e2) (45,6e4)};
\addplot[black,densely dashed,thin,forget plot] coordinates {(105,8e2) (105,6e4)};
\addplot[black,densely dashed,thin,forget plot] coordinates {(135,8e2) (135,6e4)};
\node[font=\scriptsize,anchor=north east] at (axis cs:30,5.5e4) {$s_1$};
\node[font=\scriptsize,anchor=north west] at (axis cs:45,5.5e4) {$s_2$};
\node[font=\scriptsize,anchor=north east] at (axis cs:105,5.5e4) {$s_3$};
\node[font=\scriptsize,anchor=north west] at (axis cs:135,5.5e4) {$s_4$};
\end{axis}
\end{tikzpicture}
\caption{Evolution of the total finite-element mass for the uncontrolled state and the best multi-start controls with one to four traps. Curve segments are not joined across biological events, so the pre-event and post-event values remain visually separated at each prescribed jump.}
\label{fig:total-mass-evolution}
\end{figure}

The new trajectories differ qualitatively from those obtained from the default starts. Every best control contains a trap centred at the lower admissible time $\tau=10$ days, so the controlled curves separate from the uncontrolled state almost immediately. For $N_T=2$, the second activation at day $19.21$ strengthens removal before $s_1$. For $N_T=3$, two co-located devices centred at day $28.57$ have temporal support overlapping with $s_1$, while the four-trap solution distributes three additional centres over days $25.45$, $28.61$, and $31.52$. The reductions accumulated during the foundress phase are inherited by the worker-production event at $s_1$ and then propagated through the later impulsive transitions. Consequently, the curves remain ordered throughout the active season even though the best controls contain no separate late intervention near $s_3$.

At $s_4$, the benchmark resets the future-foundress and worker/male compartments and replaces the foundress compartment by a fraction of the averaged future-foundress population. The foundress Stieltjes clock is then constant during hibernation. Hence every trajectory is constant on $[s_4,s_5]$, and the terminal values in Table~\ref{tab:ipopt-optimization-results} are the post-reset foundress masses transferred to the next generation.

\subsection{Best activation times and centres}\label{subsec:optimal-times-biological-cycle}

The representative controls associated with the best objective values are listed in Table~\ref{tab:optimal-controls}. They were sorted by activation time to remove the irrelevant permutation symmetry of identical devices.

\begin{table}[htbp]
\centering
\small
\caption{Best controls found by the multi-start battery. The last column is the distance from the initial Gaussian centre $z_0=(8,12)$, in kilometres.}
\label{tab:optimal-controls}
\begin{tabular}{c c r r r r}
\toprule
$N_T$ & Trap & $\tau_k$ (days) & $x_k$ (km) & $y_k$ (km) & $|z_k-z_0|$ (km) \\
\midrule
1 & 1 & 10.0000 & 7.97960 & 11.99951 & 0.02041 \\
\midrule
2 & 1 & 10.0000 & 7.97812 & 11.99954 & 0.02189 \\
2 & 2 & 19.2058 & 7.97611 & 11.99957 & 0.02389 \\
\midrule
3 & 1 & 10.0000 & 7.97805 & 11.99947 & 0.02196 \\
3 & 2 & 28.5730 & 7.97395 & 11.99967 & 0.02605 \\
3 & 3 & 28.5730 & 7.97395 & 11.99967 & 0.02605 \\
\midrule
4 & 1 & 10.0000 & 7.97857 & 11.99925 & 0.02145 \\
4 & 2 & 25.4483 & 7.97043 & 12.00000 & 0.02957 \\
4 & 3 & 28.6122 & 7.97154 & 11.99996 & 0.02846 \\
4 & 4 & 31.5215 & 7.97413 & 11.99950 & 0.02587 \\
\bottomrule
\end{tabular}
\end{table}

\begin{figure}[htbp]
\centering
\begin{tikzpicture}
\begin{axis}[
    width=0.94\textwidth,
    height=0.42\textwidth,
    xmin=0,xmax=180,
    ymin=0.5,ymax=4.5,
    xlabel={Time (days)},
    ytick={1,2,3,4},
    yticklabels={$N_T=1$,$N_T=2$,$N_T=3$,$N_T=4$},
    xtick={0,10,30,45,105,135,180},
    grid=major,
    clip=false,
]
\addplot[dashed,thin,forget plot] coordinates {(30,0.5) (30,4.5)};
\addplot[dashed,thin,forget plot] coordinates {(45,0.5) (45,4.5)};
\addplot[dashed,thin,forget plot] coordinates {(105,0.5) (105,4.5)};
\addplot[dashed,thin,forget plot] coordinates {(135,0.5) (135,4.5)};
\addplot+[only marks,mark=*,mark size=2.2pt] coordinates {(10,1)};
\addplot+[only marks,mark=square*,mark size=2.2pt] coordinates {(10,2) (19.2058,2)};
\addplot+[only marks,mark=triangle*,mark size=2.6pt] coordinates {(10,3) (28.5730,3)};
\node[anchor=south,font=\scriptsize] at (axis cs:28.5730,3.08) {two traps};
\addplot+[only marks,mark=diamond*,mark size=2.6pt] coordinates {(10,4) (25.4483,4) (28.6122,4) (31.5215,4)};
\node[anchor=south,font=\scriptsize] at (axis cs:30,4.48) {$s_1$};
\node[anchor=south,font=\scriptsize] at (axis cs:45,4.48) {$s_2$};
\node[anchor=south,font=\scriptsize] at (axis cs:105,4.48) {$s_3$};
\node[anchor=south,font=\scriptsize] at (axis cs:135,4.48) {$s_4$};
\end{axis}
\end{tikzpicture}
\caption{Activation times of the best multi-start controls relative to the biological events. The temporal kernel radius is $T_M=10$ days. Every representative uses the lower admissible bound, and the additional devices concentrate before or around $s_1$.}
\label{fig:optimal-times-biological-calendar}
\end{figure}

The multi-start timing pattern is therefore earlier than in the single-start experiment. The lower bound $\tau=10$ is active for every $N_T$, indicating that earlier admissible deployment could potentially be beneficial and that the temporal bound itself should be included in future sensitivity studies. The additional devices act before or across $s_1=30$, when foundress removal has the longest remaining time to affect both continuous dynamics and subsequent impulsive production. Some worse local minima retain interventions near $s_2$ or $s_3$, but these late windows are not present in the best values found by the reported battery.

All centres remain within $0.03$ km of $z_0=(8,12)$, far below the kernel radius $R_M=4$ km. This is consistent with the single radially concentrated academic focus and confirms that the benchmark is primarily informative about timing. In the best three-trap control, two traps coincide to numerical precision at day $28.57$ and at the same centre. The additive mortality law contains no separation, saturation, or overlap penalty, so co-location represents increased local intensity rather than a numerical inconsistency. Distinct physical deployment sites would require explicit separation constraints or a different resource model.

\subsection{Scope of the multi-start evidence}

The multi-start battery materially strengthens the numerical study, but it does not prove global optimality. Eight initial controls sample only a finite subset of the admissible set, and the wide distribution in Figure~\ref{fig:multistart-objective-dispersion} provides numerical evidence of nonconvex behaviour. The best values should therefore be described as ``best found'' or ``best multi-start solutions''. Moreover, because no deployment cost is included, the decreasing objective as $N_T$ increases does not determine an operationally optimal number of traps. Kernel-parameter sensitivity, explicit costs or resource constraints, and minimum-separation or saturation effects remain necessary before management interpretation.

\FloatBarrier
\section{Calibration and held-out temporal evaluation of spatial fields on the Galicia domain}\label{sec:galicia-validation}

The academic benchmark verifies the differentiation and optimization machinery, but its parameters and initial condition were not inferred from field data. We therefore carried out a separate Galicia experiment whose purpose is narrower: to test whether the uncontrolled Stieltjes model transfers broad spatial information from one annual nest distribution to the next. Following the distinction between calibration and evaluation in ecological modelling, the purpose, performance criteria, and intended use are stated before the results are interpreted~\cite{Rykiel1996}. The experiment is not used to certify the model as a complete representation of the invasion process. It contains one calibration transition and one held-out transition, so the reported metrics are descriptive rather than inferential.

\subsection{Galicia derivators over an arbitrary number of generations}\label{subsec:galicia-derivators}

For the Galicia experiment we use the same compartment-dependent Stieltjes calendar as in the uncontrolled \textit{Vespa velutina} model of~\cite{AreaVelutina2025}, now repeated over several generations. Let $N_G\in\N$ be the number of generations, let $\lambda_1,\ldots,\lambda_5>0$ denote the phase lengths of foundress dispersal, primary-nest activity, secondary-nest activity, future-foundress activity/dispersal, and hibernation, and set
\[
 L:=\lambda_1+\lambda_2+\lambda_3+\lambda_4+\lambda_5,
 \qquad
 T_{k,0}:=(k-1)L,
 \qquad
 T_{k,m}:=T_{k,0}+\sum_{j=1}^{m}\lambda_j,
\]
for $k=1,\ldots,N_G$ and $m=1,\ldots,5$. We also write
\[
 R_{a,b}(t):=\max\{0,\min\{t-a,\,b-a\}\},\qquad a<b,
\]
for the truncated ramp that accumulates unit Stieltjes time on $(a,b]$ and remains constant outside that interval. Then the three derivators used on the Galicia domain are
\begin{align}
 g_1^{(N_G)}(t)
 &=\sum_{k=1}^{N_G}\Bigl(R_{T_{k,0},T_{k,4}}(t)+\chi_{(T_{k,4},\infty)}(t)\Bigr),\label{eq:galicia-g1}\\
 g_2^{(N_G)}(t)
 &=\sum_{k=1}^{N_G}\Bigl(R_{T_{k,0},T_{k,5}}(t)+\chi_{(T_{k,3},\infty)}(t)+\chi_{(T_{k,4},\infty)}(t)\Bigr),\label{eq:galicia-g2}\\
 g_3^{(N_G)}(t)
 &=\sum_{k=1}^{N_G}\Bigl(R_{T_{k,0},T_{k,5}}(t)+\chi_{(T_{k,1},\infty)}(t)+\chi_{(T_{k,2},\infty)}(t)
 +\chi_{(T_{k,3},\infty)}(t)+\chi_{(T_{k,4},\infty)}(t)\Bigr).\label{eq:galicia-g3}
\end{align}
These formulas are the multi-generation extension of the one-generation hornet calendar used in~\cite{AreaVelutina2025}; compare also the discussion of systems with several derivators in~\cite{LopezPousoMarquez2019}. The foundress clock $g_1^{(N_G)}$ is frozen during each hibernation interval $(T_{k,4},T_{k,5}]$, whereas $g_2^{(N_G)}$ and $g_3^{(N_G)}$ continue to evolve continuously throughout the whole annual cycle. The atomic parts represent abrupt biological transfers: renewal of foundresses at $T_{k,4}$ for $g_1^{(N_G)}$, production and reset events at $T_{k,3}$ and $T_{k,4}$ for $g_2^{(N_G)}$, and all four event times for $g_3^{(N_G)}$.

Figure~\ref{fig:galicia-derivators-two-generations} shows a schematic realization of~\eqref{eq:galicia-g1}--\eqref{eq:galicia-g3} for two generations, using the same phase lengths as the academic benchmark. As in the general Stieltjes framework~\cite{PousoRodriguez2015,FernandezTojo2020Bochner}, the flat pieces encode inactivity and the atoms encode instantaneous transfers without changing the underlying spatial domain.

\begin{figure}[htbp]
\centering
\includegraphics[width=0.96\linewidth]{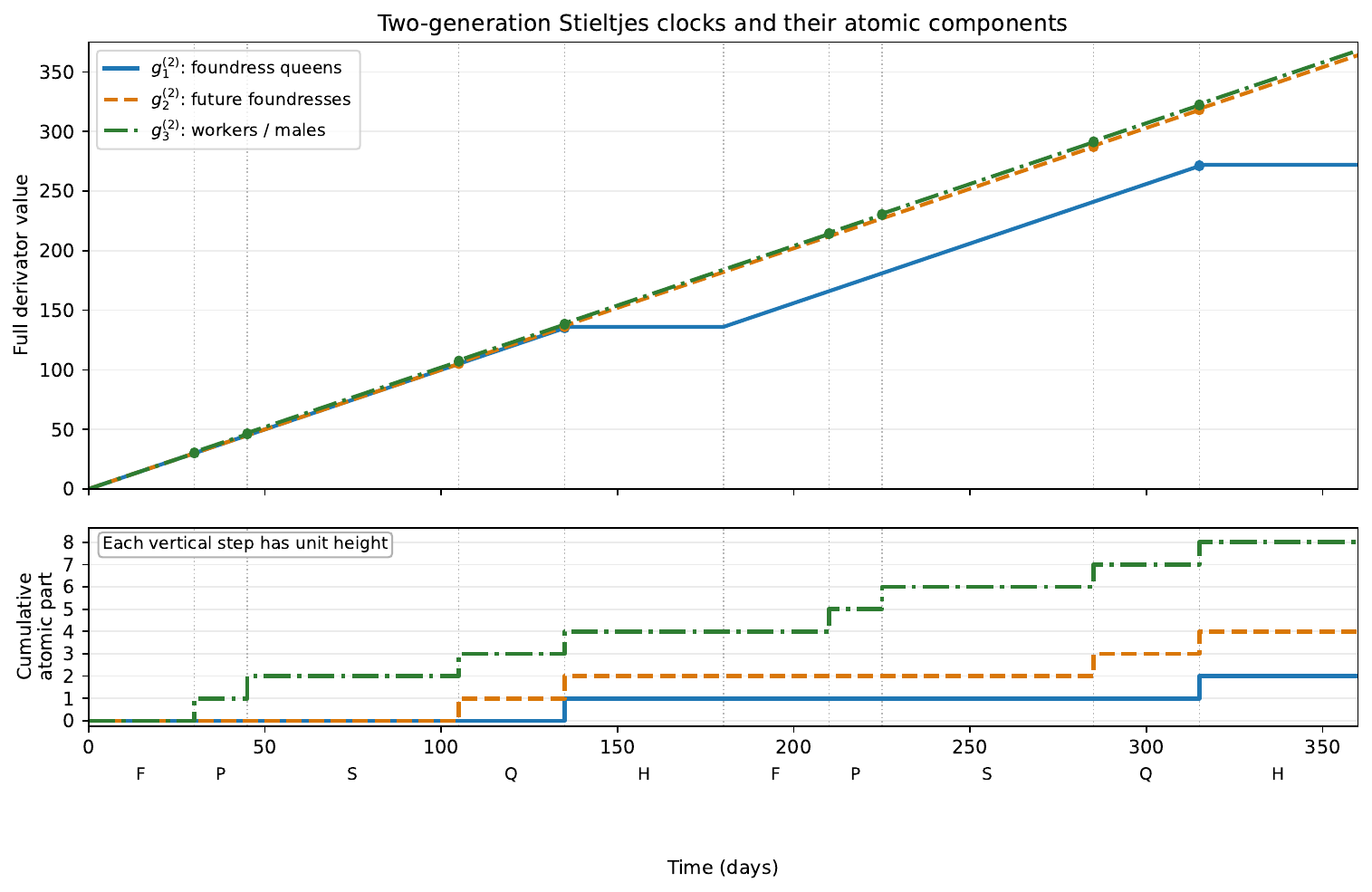}
\caption{Two-generation realization of the Galicia derivators~\eqref{eq:galicia-g1}--\eqref{eq:galicia-g3}. The upper panel shows the full clocks; the lower panel isolates their cumulative atomic components, so every jump is displayed at its true unit height. The letters F, P, S, Q and H denote foundress dispersal, primary-nest activity, secondary-nest activity, future-foundress activity/dispersal and hibernation, respectively. In the upper panel, open and filled markers indicate the left and right values at each jump.}
\label{fig:galicia-derivators-two-generations}
\end{figure}

\subsection{Observational layers and kernel fields}\label{subsec:galicia-kernel-fields}

Two administrative layers are available. The \texttt{neutralized} layer is almost complete in projected coordinates for 2023--2025 and is used for parameter and operator selection. The \texttt{eliminated} layer has a more variable coordinate-completeness history and is retained as a separate robustness layer. The two layers are not merged because they need not represent the same observation and intervention process.

For year $y$, let $\{x_{j,y}\}_{j=1}^{N_y}\subset\Omega$ denote the valid recorded locations. A point cloud cannot be compared directly with a finite-element state. We therefore define the smoothed observation field
\begin{equation}\label{eq:galicia-kde}
 Z_y^{\varepsilon_{\rm obs}}(x)
 =\sum_{j=1}^{N_y}\varphi_{\varepsilon_{\rm obs}}(x-x_{j,y}),
 \qquad \varepsilon_{\rm obs}=2000\ \mathrm{m},
\end{equation}
where $\varphi_{\varepsilon}$ is the compact spatial kernel used in the numerical model. Kernel smoothing is a standard device for converting point observations into a spatial intensity proxy~\cite{Silverman1986}. Here it is used as an observation representation rather than as a claim that the administrative records form a fully observed point process.

For spatial-shape comparisons we normalize
\begin{equation}\label{eq:galicia-normalized-field}
 \widehat Z_y=\frac{Z_y^{\varepsilon_{\rm obs}}}
 {\int_\Omega Z_y^{\varepsilon_{\rm obs}}(x)\,\dd x}.
\end{equation}
The observed bandwidth is fixed for all candidates. This is important: changing the observed bandwidth during parameter selection would change the target field and confound model calibration with observation preprocessing. The value $2$ km is a prescribed representation scale rather than a bandwidth estimate, and no boundary correction is applied; sensitivity to both choices remains to be assessed.

The model initial condition is constructed from the input year's point cloud with a separate radius
\[
 r_{\rm init}\in\{1000,2000,3000,4000,5000\}\ \mathrm{m},
\]
and is globally normalized to a mass proportional to $N_y$. The diffusion grid is
\[
 \nu\in\{0.5,1,2,3,4\}\times10^6\ \mathrm{m^2/month}.
\]
The observation bandwidth and the initial-condition radius thus have distinct roles.

\subsection{Observation operators}\label{subsec:galicia-observation-operators}

Observations and model states live in different spaces. We make the comparison explicit through linear observation operators, in the standard sense of a map from model state space to observation space~\cite{LahozSchneider2014}. For a given annual cycle $k$, set
\[
 t_j:=T_{k,j-1},\qquad j=1,\ldots,6,
\]
so that $t_1=T_{k,0}$ and $t_6=T_{k,5}=T_{k+1,0}$. The cycle index is suppressed below because each operator is applied to a single annual transition. For the uncontrolled state $Y=(y_1,y_2,y_3)$, six candidates are tested:
\begin{align}
 H_F(Y)&=y_1(t_2^-),\label{eq:HF}\\
 H_P(Y)&=\frac1{t_3-t_2}\int_{t_2}^{t_3}y_3(t)\,\dd t,\label{eq:HP}\\
 H_S(Y)&=\frac1{t_4-t_3}\int_{t_3}^{t_4}y_3(t)\,\dd t,\label{eq:HS}\\
 H_{PS}(Y)&=\frac1{t_4-t_2}\int_{t_2}^{t_4}y_3(t)\,\dd t,\label{eq:HPS}\\
 H_Q(Y)&=\frac1{t_5-t_4}\int_{t_4}^{t_5}y_2(t)\,\dd t,\label{eq:HQ}\\
 H_R(Y)&=y_1(t_6).\label{eq:HR}
\end{align}
They represent, respectively, foundresses before the first transfer, average primary-nest activity, average secondary-nest activity, combined nest activity, future-foundress activity, and the end-of-cycle renewal field. The discrete implementation uses the corresponding arithmetic averages on the uniform time grid.

The distinction between $H_F$ and the later operators is structurally important. Before the first biological jump, $y_1$ evolves by diffusion and spatially homogeneous mortality. After normalization, the homogeneous scalar factor disappears. Thus $H_F$ is expected to be nearly indistinguishable from a pure-diffusion baseline. Operators \eqref{eq:HP}--\eqref{eq:HR} test whether calendar-dependent compartmental filtering improves on a scalar-diffusion forecast. In the Galicia implementation, all compartments share the same homogeneous diffusion and mortality coefficients and the transfers are spatially uniform scalars. Hence these operators form temporal mixtures of heat-semigroup states. Their comparison can assess the usefulness of that temporal--compartmental filter, but cannot identify the biological transfer coefficients or validate a spatially heterogeneous biological mechanism.

\subsection{Calibration--validation split and baselines}\label{subsec:galicia-protocol}

Within the calibration--evaluation protocol, all model choices are made from the 2023--2024 transition. The selected operator and parameters are then frozen and evaluated on 2024--2025. We call this a held-out temporal evaluation: 2025 is excluded from the selection rule, but the exercise is not a preregistered blind forecast because earlier exploratory analyses had inspected the same year. The split follows the forecasting principle of separating fitting and evaluation~\cite{DietzeEtAl2018}. It is temporal rather than spatial holdout; therefore it does not remove all effects of spatial autocorrelation, and it should not be interpreted as spatial cross-validation~\cite{RobertsEtAl2017,PlotonEtAl2020}.

Two baselines are used.
\begin{enumerate}[label=(B\arabic*)]
 \item \emph{Persistence}: the normalized field from the input year is used directly as the prediction for the following year, i.e.
 \[
 P_{\rm pers}^{23\to24}=\widehat Z_{2023},
 \qquad
 P_{\rm pers}^{24\to25}=\widehat Z_{2024}.
 \]
 \item \emph{Matched diffusion}: the scalar heat equation
 \begin{equation}\label{eq:galicia-diffusion-baseline}
 \partial_t w-\nu\Delta w=0\quad\hbox{in }\Omega,
 \qquad \partial_{\bm n}w=0\quad\hbox{on }\partial\Omega,
 \end{equation}
 starts from the same initial field as the biological model. Its solution is sampled or averaged over the same temporal window as the corresponding biological observation operator.
\end{enumerate}
Persistence asks whether the model improves on the strongest immediate empirical predictor, whereas matched diffusion isolates the value added by the Stieltjes transfers and compartment structure. Baseline-relative skill is preferable to reporting goodness of fit without a reference forecast~\cite{DietzeEtAl2018}.

For normalized predictions $P$ and observations $Z$, the reported metrics are
\begin{align}
 E_{L^2}(P,Z)
 &=\frac{\|P-Z\|_{L^2(\Omega)}}{\|Z\|_{L^2(\Omega)}},
 \label{eq:galicia-L2}\\
 r(P,Z)
 &=\frac{\int_\Omega(P-\overline P)(Z-\overline Z)\,\dd x}
 {\left(\int_\Omega(P-\overline P)^2\,\dd x\right)^{1/2}
  \left(\int_\Omega(Z-\overline Z)^2\,\dd x\right)^{1/2}},
 \qquad
 \overline P=\frac1{|\Omega|}\int_\Omega P\,\dd x,
 \label{eq:galicia-correlation}
\end{align}
and the centroid distance
\begin{align}
 d_c(P,Z)
 &=\|c(P)-c(Z)\|_{\R^2},
 \qquad
 c(P)=\frac{\int_\Omega xP(x)\,\dd x}{\int_\Omega P(x)\,\dd x}.
 \label{eq:galicia-centroid}
\end{align}
The metrics are complementary: after unit-mass normalization, $E_{L^2}$ penalizes local field-amplitude and shape discrepancies but contains no information on annual abundance; correlation measures co-variation independently of scale; and $d_c$ detects systematic displacement of the mass distribution. No confidence intervals or sampling distributions are attached to these descriptive metrics.

For calibration, the skills relative to persistence are
\begin{equation}\label{eq:galicia-skills}
 S_{L^2}=1-\frac{E_{L^2}^{\rm bio}}{E_{L^2}^{\rm pers}},
 \qquad
 S_{\rm corr}=\frac{r^{\rm bio}-r^{\rm pers}}{1-r^{\rm pers}},
 \qquad
 S_{\rm comp}=\frac12(S_{L^2}+S_{\rm corr}).
\end{equation}
The primary choice maximizes $S_{\rm comp}$ on the \texttt{neutralized} calibration layer. Equal weighting is a pragmatic multi-metric decision rule, not a proper probabilistic score or an inferential test; sensitivity to alternative weights was not examined. Combining the two skills nevertheless prevents selection from being driven exclusively by additional smoothing, which can reduce an $L^2$ error while degrading spatial correlation. The unchanged primary choice is then applied to \texttt{eliminated}. That layer is a robustness check derived from the same administrative system, not an independent external validation dataset.

\subsection{Selected operator and parameter region}\label{subsec:galicia-selection}

The battery contains $2\times5\times5\times6=300$ layer--radius--diffusion--operator combinations. The primary selection is
\begin{equation}\label{eq:galicia-primary-selection}
 \boxed{H_P,\qquad r_{\rm init}=2000\ \mathrm{m},\qquad
 \nu=4\times10^6\ \mathrm{m^2/month}.}
\end{equation}
Thus the average primary-nest activity is selected instead of the pre-transfer foundress field. Figure~\ref{fig:galicia-skill-surface} shows that the calibration score is flat near its maximum. In particular, $(3000,4\times10^6)$ and $(4000,3\times10^6)$ give scores very close to the selected value. Moreover, the selected diffusion is on the upper grid boundary. The pair in \eqref{eq:galicia-primary-selection} must therefore be interpreted as a representative effective smoothing scale, not as a precisely identified biological parameter. This weak-identifiability interpretation is standard in inverse problems when substantially different parameter values produce nearly equivalent model outputs~\cite{Tarantola2005}.

\begin{figure}[htbp]
\centering
\includegraphics[width=0.72\linewidth]{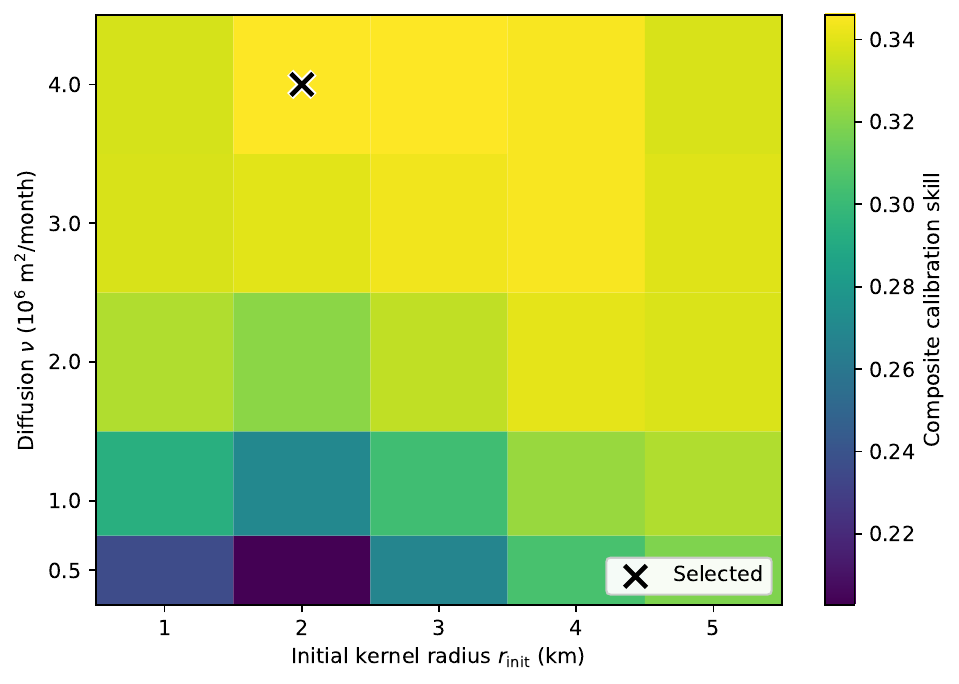}
\caption{Composite calibration skill for the primary-nest observation operator on the \texttt{neutralized} layer. The cross marks the selected pair. The shallow maximum and the upper-bound value of $\nu$ indicate weak parameter identification.}
\label{fig:galicia-skill-surface}
\end{figure}

\subsection{Held-out evaluation and cross-layer robustness}\label{subsec:galicia-results}

Table~\ref{tab:galicia-primary-results} reports the frozen primary selection. On \texttt{neutralized}, the biological model improves the held-out $L^2$ error by
\[
 1-\frac{0.6319}{0.6879}=8.14\%
\]
relative to persistence and by
\[
 1-\frac{0.6319}{0.6527}=3.18\%
\]
relative to matched diffusion. Applying the same choice unchanged to \texttt{eliminated} improves $L^2$ by $10.69\%$ relative to persistence and by $1.01\%$ relative to diffusion. The ordering
\[
 E_{L^2}^{\rm bio}<E_{L^2}^{\rm diff}<E_{L^2}^{\rm pers}
\]
is therefore reproduced across both layers.

\begin{table}[htbp]
\centering
\caption{Calibration and held-out metrics for the primary selection \eqref{eq:galicia-primary-selection}. Smaller $L^2$ and centroid values and larger correlations are preferable. The primary choice is selected only on the \texttt{neutralized} calibration data and transferred unchanged to \texttt{eliminated}.}
\label{tab:galicia-primary-results}
\small
\begin{tabular}{llrrr}
\toprule
Layer and period & Metric & Biological & Diffusion & Persistence\\
\midrule
Neutralized, 2023--2024 & relative $L^2$ & \textbf{0.6279} & 0.6422 & 0.9779\\
 & correlation & \textbf{0.7314} & 0.7152 & 0.5967\\
 & centroid distance (km) & 6.553 & 6.613 & \textbf{6.506}\\
\addlinespace
Neutralized, 2024--2025 & relative $L^2$ & \textbf{0.6319} & 0.6527 & 0.6879\\
 & correlation & 0.7507 & 0.7087 & \textbf{0.8007}\\
 & centroid distance (km) & 9.369 & \textbf{9.254} & 9.458\\
\addlinespace
Eliminated, 2024--2025 & relative $L^2$ & \textbf{0.6784} & 0.6853 & 0.7596\\
 & correlation & 0.6675 & 0.6321 & \textbf{0.7031}\\
 & centroid distance (km) & 8.843 & \textbf{8.787} & 8.883\\
\bottomrule
\end{tabular}
\end{table}

Correlation gives a more cautious conclusion. Persistence retains the largest held-out correlation on both layers. The biological prediction nevertheless improves correlation relative to matched diffusion, by approximately $0.042$ on \texttt{neutralized} and $0.035$ on \texttt{eliminated}. Centroid errors remain near $9$ km for every method, so the model does not resolve the systematic displacement of the annual distribution.

Figure~\ref{fig:galicia-spatial-fields} illustrates the principal effect. The biological model preserves the dominant northwestern and southwestern concentrations but produces a substantially smoother field than the 2025 observations. This broad smoothing explains the improved $L^2$ discrepancy, while the loss of fine-scale peaks helps explain why persistence keeps the larger correlation.

\begin{figure}[htbp]
\centering
\includegraphics[width=0.94\linewidth]{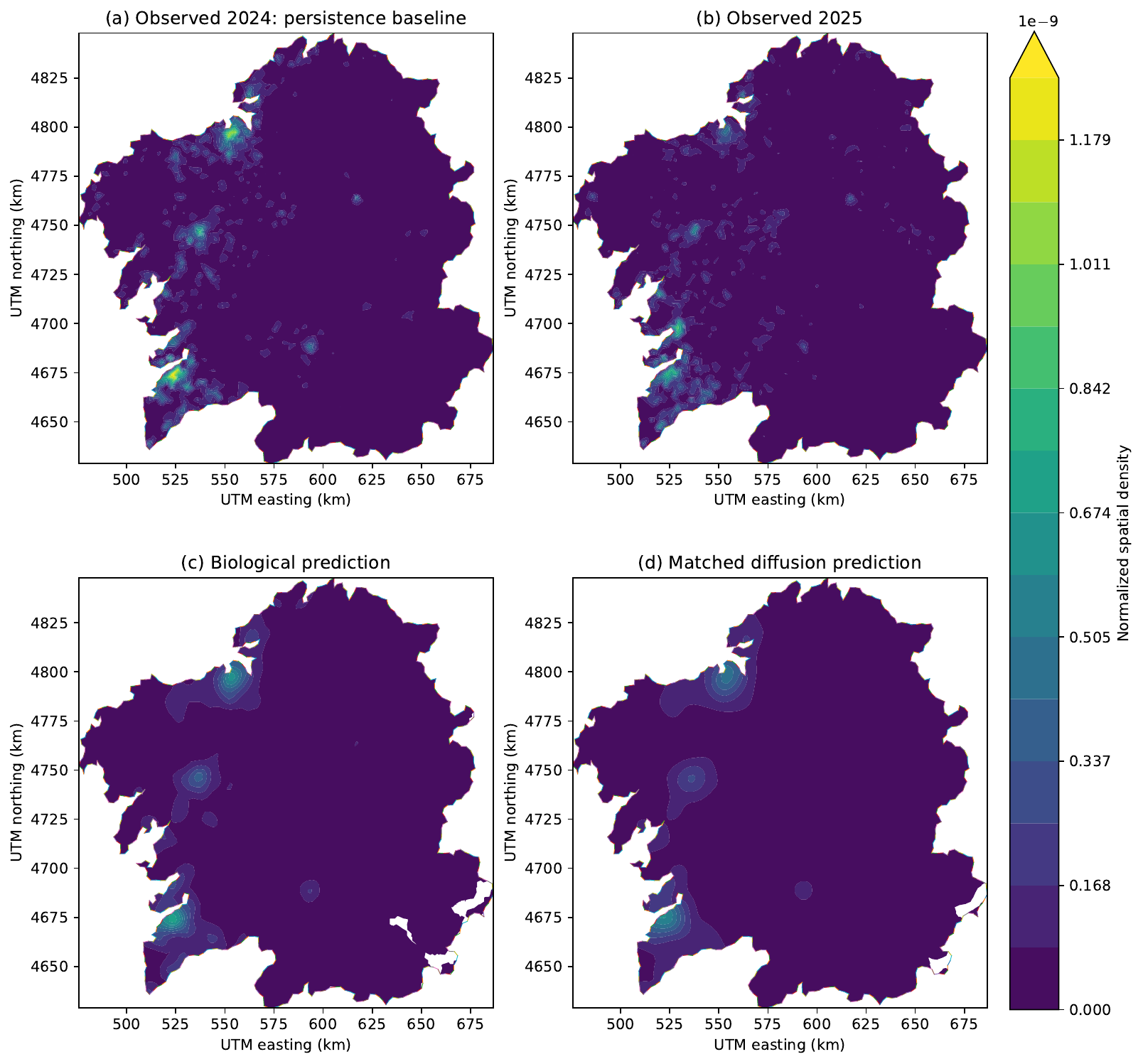}
\caption{Normalized spatial fields for the held-out 2024--2025 evaluation on the \texttt{neutralized} layer. The fields are rendered directly on the 5128-vertex, 9763-triangle FreeFEM++ mesh used by the state solver. Consequently, the coastline is the exact boundary of \texttt{Th.msh}, rather than a convex hull or a polygon reconstructed from the nodal cloud. A common color scale and equal UTM axis scaling are used in all four panels.}
\label{fig:galicia-spatial-fields}
\end{figure}

\subsection{Comparison of biological observation operators}\label{subsec:galicia-operator-comparison}

Table~\ref{tab:galicia-operators} compares the best calibration-selected parameter pair for each operator on \texttt{neutralized}. The primary-nest operator gives the smallest held-out $L^2$ error and the largest correlation among the six biological candidates. The advantage is modest but coherent across the two metrics.

\begin{table}[htbp]
\centering
\caption{Best calibration-selected candidate for each observation operator on the \texttt{neutralized} layer, followed by held-out metrics.}
\label{tab:galicia-operators}
\small
\begin{tabular}{lrrrrr}
\toprule
Operator & $r_{\rm init}$ (km) & $\nu/10^6$ & Validation $L^2$ & Validation corr. & Bio--diff. $L^2$\\
\midrule
$H_F$: pre-transfer foundresses & 4 & 2 & 0.6411 & 0.7378 & 0.0041\\
$H_P$: primary-nest activity & 2 & 4 & \textbf{0.6319} & \textbf{0.7507} & 0.1741\\
$H_S$: secondary-nest activity & 4 & 1 & 0.6386 & 0.7431 & 0.0610\\
$H_{PS}$: combined nest activity & 4 & 1 & 0.6381 & 0.7440 & 0.0437\\
$H_Q$: future-foundress activity & 2 & 1 & 0.6414 & 0.7407 & 0.1057\\
$H_R$: end-of-cycle renewal & 2 & 1 & 0.6414 & 0.7407 & 0.1902\\
\bottomrule
\end{tabular}
\end{table}

For $H_F$, the normalized biological and diffusion fields differ by only $0.0041$ in relative $L^2$, confirming the structural equivalence before the first transfer. For $H_P$, the biological--diffusion discrepancy rises to $0.1741$, and the biological model outperforms matched diffusion in held-out evaluation. This is the clearest evidence in the experiment that the calendar-dependent temporal--compartmental filter can outperform the matched scalar-diffusion baseline at broad scale. Because the coefficients are spatially homogeneous and common across compartments, the result does not isolate or validate individual biological rates.

\begin{figure}[htbp]
\centering
\includegraphics[width=0.82\linewidth]{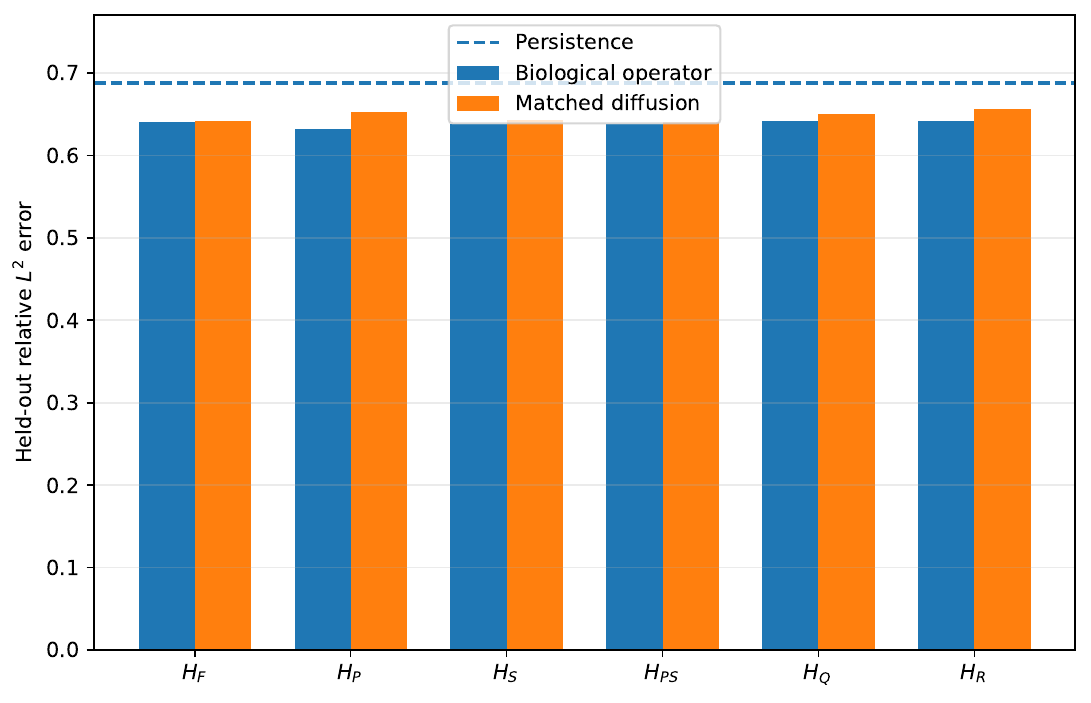}
\caption{Held-out relative $L^2$ error on the \texttt{neutralized} layer for the six observation operators, their matched-diffusion baselines, and persistence. Each operator is evaluated at the parameter pair selected from 2023--2024 calibration only.}
\label{fig:galicia-operator-L2}
\end{figure}

\subsection{Abundance non-identifiability and scope of the evidence}\label{subsec:galicia-abundance}

A scalar factor $\rho$ can be calibrated by matching the total mass in 2024 and then used to predict a 2025 count. The resulting values are $5141.0$ versus $14061$ observed records on \texttt{neutralized}, and $4471.2$ versus $7029$ on \texttt{eliminated}. These discrepancies must not be interpreted as an independent validation of abundance. Because the state equations and the tested observation operators are linear, the initial mass is proportional to the annual count, and $\rho$ is calibrated by total mass, the calculation contains no independent abundance information. Indeed, if the unscaled one-year propagation multiplies total mass by $q$, calibration on 2023--2024 gives $\rho q=N_{2024}/N_{2023}$. Applying the same factor once more yields
\begin{equation}\label{eq:galicia-count-identity}
 \widehat N_{2025}=(\rho q)N_{2024}=\frac{N_{2024}^2}{N_{2023}}.
\end{equation}
It is therefore independent of the spatial parameter pair except for numerical roundoff. A meaningful abundance model requires at least an explicit observation-effort or detection component, and possibly nonlinear demographic mechanisms.

The Galicia experiment consequently supports only the following restricted statement: the selected full-cycle operator has partial predictive skill for the \emph{normalized broad spatial pattern}. It improves quadratic spatial error over both baselines and transfers across the two administrative layers, but persistence retains higher correlation, the centroid displacement is unresolved, fine-scale peaks are oversmoothed, the diffusion scale lies on the parameter-grid boundary, and annual counts are not identified. These limitations are essential because spatial dependence and observation-process changes can otherwise create false confidence in ecological maps~\cite{PlotonEtAl2020}. The experiment contains only one fitted transition and one held-out transition, the two administrative layers are not independent, the kernel representation has unquantified bandwidth and boundary sensitivity, and no uncertainty intervals are available. The reported skill should therefore be interpreted as a case-study diagnostic rather than as a population-level estimate of predictive performance.

\subsection{Galicia control formulation: trapping clusters and effective coverage radii}\label{subsec:galicia-control-clusters}

The spatial support radius used in the academic benchmark must not be interpreted as the attraction range of one physical bait trap. Available management guidance explicitly identifies the feeding area, the distance travelled around the nest, trap specificity, and trapping uncertainty as unresolved research topics~\cite{MITECO2015,MonceauEtAl2014}. The Spanish national strategy also discourages generalized non-selective trapping, recommends traps at selected locations and, in established areas, mainly at apiaries, and expresses deployment effort as several traps per apiary rather than as a large attraction radius for a single device~\cite{MITECO2015}. More generally, continental experience indicates that trapping and nest-destruction measures must be spatially coordinated and should not be expected, in isolation, to stop regional spread~\cite{RobinetEtAl2017,LaurinoEtAl2022}.

For the Galicia-domain control stage, we therefore interpret each optimized centre as a \emph{trapping cluster} or \emph{territorial management unit}. Cluster $k$ is described by
\begin{equation}\label{eq:galicia-cluster-control}
 \mathcal C_k=(\tau_k,z_k,n_k,R_k),
\end{equation}
where $\tau_k$ is its activation time, $z_k\in\Omega_{\rm adm}$ is the cluster centre, $n_k\in\N$ is the number of physical traps deployed within the unit, and $R_k$ is an effective coverage radius. The latter combines the spatial distribution of the devices, repeated movement of insects through the treated area, the duration of exposure, and the spatial resolution of the population model. It is not an empirically established lure-attraction radius.

To preserve the smooth compactly supported structure of the academic kernel, let
\begin{equation}\label{eq:galicia-cluster-kernel}
 K_R(x)=
 \exp\!\left(1-\frac{1}{1-|x|^2/R^2}\right)
 \one_{\{|x|<R\}}.
\end{equation}
Then $K_R(0)=1$ and the mathematical support has diameter $2R$. The kernel is strongly tapered: its half-maximum radius is
\begin{equation}\label{eq:galicia-halfmax-radius}
 r_{1/2}=R\sqrt{\frac{\log 2}{1+\log 2}}\simeq0.64R,
\end{equation}
so the high-intensity core is considerably smaller than the full support. This distinction is important when interpreting kilometre-scale radii.

A saturating aggregate mortality law is preferable to linear superposition of individual traps. The implemented order of operations is important: the contributions of all physical traps are first aggregated into a local exposure, and two scalar saturation maps are then applied to that aggregate. The precise normalized kernel, activation profile and nested exposure are defined below. This construction keeps the mortality bounded and, unlike a clusterwise saturation performed before summation, preserves an exact partition invariance for coincident allocations.

\subsubsection*{Normalized moving kernels and exact derivatives}

The implemented Galicia kernel is normalized on the computational domain. The zero extension of $K_R$ belongs to $C_c^\infty(\mathbb R^2)$; hence translations are continuously differentiable in $L^1(\Omega)$, and differentiation under the integral defining $Z_R$ is justified by dominated convergence. For a centre $z$ and radius $R$, set
\begin{equation}\label{eq:galicia-kernel-normalization}
 Z_R(z)=\int_\Omega K_R(x-z)\,\dd x,
 \qquad
 \kappa_{R,z}(x)=I_{\rm ref}\frac{K_R(x-z)}{Z_R(z)},
\end{equation}
where $I_{\rm ref}>0$ is fixed. Thus $\int_\Omega\kappa_{R,z}=I_{\rm ref}$ even when the compact support is truncated by the territorial boundary. The admissible centre set used below is the finite union of two closed and bounded $5$ km boxes. These boxes are chosen so that $Z_R(z)>0$ for every admissible centre; since $Z_R$ is continuous, compactness yields the uniform bound
\[
 Z_R(z)\ge Z_{R,\min}:=\min_{z\in\Omega_{\rm adm}}Z_R(z)>0.
\]
Consequently, the normalized kernel and its first centre derivatives are well defined and uniformly bounded on the entire admissible set. If $\xi$ is either centre coordinate, the quotient rule gives
\begin{equation}\label{eq:galicia-normalized-kernel-derivative}
 \partial_\xi\kappa_{R,z}(x)
 =I_{\rm ref}\frac{\partial_\xi K_R(x-z)Z_R(z)
 -K_R(x-z)\partial_\xi Z_R(z)}{Z_R(z)^2},
 \qquad
 \partial_\xi Z_R(z)=\int_\Omega\partial_\xi K_R(x-z)\,\dd x.
\end{equation}
In particular,
\begin{equation}\label{eq:galicia-normalized-derivative-zero-mass}
 \int_\Omega\partial_\xi\kappa_{R,z}(x)\,\dd x=0,
\end{equation}
which is an exact audit of the normalization derivative. The coverage diagnostic is
\begin{equation}\label{eq:galicia-coverage-definition}
 c_R(z)=\frac{Z_R(z)}{Z_R^{\mathbb R^2}},
 \qquad
 \partial_\xi c_R(z)=\frac{\partial_\xi Z_R(z)}{Z_R^{\mathbb R^2}},
\end{equation}
where $Z_R^{\mathbb R^2}$ is the full-space kernel integral. It affects only the admissibility safeguard, not the normalized mortality field.

For a campaign of fixed duration $d_k$ and smoothing width $\varepsilon>0$, the code uses
\begin{equation}\label{eq:galicia-smooth-window}
 \chi_k(t;\tau_k)
 =\frac14\left(1+\tanh\frac{t-\tau_k}{\varepsilon}\right)
 \left(1+\tanh\frac{\tau_k+d_k-t}{\varepsilon}\right).
\end{equation}
Set
\[
 A_k(t)=\frac{t-\tau_k}{\varepsilon},
 \qquad
 B_k(t)=\frac{\tau_k+d_k-t}{\varepsilon}.
\]
Differentiating both factors in~\eqref{eq:galicia-smooth-window} gives the explicit formula
\begin{equation}\label{eq:galicia-smooth-window-derivative}
 \partial_{\tau_k}\chi_k(t;\tau_k)
 =-\frac{1}{4\varepsilon}\operatorname{sech}^2\!A_k(t)\bigl(1+\tanh B_k(t)\bigr)
 +\frac{1}{4\varepsilon}\bigl(1+\tanh A_k(t)\bigr)\operatorname{sech}^2\!B_k(t).
\end{equation}
The first term differentiates the switch-on factor and the second differentiates the switch-off factor; their signs are opposite, as expected for a translation of a fixed-duration campaign. Writing
\begin{equation}\label{eq:galicia-nested-exposure}
 L_u(t,x)=\sum_{k=1}^{N_C}n_k\chi_k(t;\tau_k)\kappa_{R_k,z_k}(x),
 \quad
 E_u=\eta_{\max}(1-e^{-\gamma L_u}),
 \quad
 a_u=a_{\max}(1-e^{-E_u}),
\end{equation}
the complete chain rule is
\begin{equation}\label{eq:galicia-mortality-chain-rule}
 \partial_\theta a_u
 =a_{\max}e^{-E_u}\eta_{\max}\gamma e^{-\gamma L_u}
 \partial_\theta L_u,
\end{equation}
with
\begin{align}
 \partial_{\tau_k}L_u
 &=n_k\,\partial_{\tau_k}\chi_k\,\kappa_{R_k,z_k},\label{eq:galicia-L-tau-derivative}\\
 \partial_{x_k}L_u
 &=n_k\chi_k\,\partial_{x_k}\kappa_{R_k,z_k},
 &
 \partial_{y_k}L_u
 &=n_k\chi_k\,\partial_{y_k}\kappa_{R_k,z_k}.
 \label{eq:galicia-L-space-derivatives}
\end{align}
Equations~\eqref{eq:galicia-normalized-kernel-derivative}--\eqref{eq:galicia-L-space-derivatives} are the exact formulas implemented and validated by finite differences. In the assembled code, the raw kernels and their centre derivatives are represented in the same $P_1$ space and every normalization integral is evaluated with the same quadrature rule. Consequently, the discrete analogue of $\int_\Omega\partial_\xi\kappa_{R,z}\dd x=0$ is an algebraic identity up to roundoff rather than a comparison between incompatible quadratures.

Table~\ref{tab:galicia-control-production-constants} records the numerical constants used in every production run from the L2 continuation through the final temporal-refinement study. The normalization target is the full-space integral of the reference kernel with radius $R_{\rm ref}=2$ km,
\[
 I_{\rm ref}=1.2681121611275896\,(2000\ {\rm m})^2
 =5.072448644510359\times10^6\ {\rm m}^2.
\]

\begin{table}[htbp]
\centering
\small
\caption{Production constants for the Galicia trapping-cluster control. These values define the numerical scenario; the intensity constants have not been calibrated from field intervention data.}
\label{tab:galicia-control-production-constants}
\begin{tabular}{lll}
\toprule
Quantity & Value & Role \\
\midrule
$N_C$ & $2$ & number of optimized trapping clusters \\
$n_1=n_2$ & $10$ traps & physical devices assigned to each cluster \\
$R_1=R_2$ & $4$ km & effective compact-support radius \\
$d_1=d_2$ & $1$ month & fixed campaign duration \\
$\varepsilon$ & $0.08$ months & smoothing width of the activation window \\
$\eta_{\max}$ & $1$ & upper level of the exposure saturation \\
$\gamma$ & $0.25$ & inverse exposure scale in $E_u$ \\
$a_{\max}$ & $0.6$ month$^{-1}$ & maximum additional mortality coefficient \\
$I_{\rm ref}$ & $5.072448644510359\times10^6$ m$^2$ & prescribed integral of each normalized kernel \\
$R_{\rm ref}$ & $2$ km & reference radius defining $I_{\rm ref}$ \\
$t_{\rm scale}$ & $1$ month & temporal optimization scale \\
$z_{\rm scale}$ & $10^5$ m & scale for optimized centre offsets \\
\bottomrule
\end{tabular}
\end{table}

The reported value of $1-J_{\rm bio}$ is therefore conditional on the fixed constants in Table~\ref{tab:galicia-control-production-constants}. In particular, it is a reproducible numerical diagnostic for this scenario, not an estimate of real trap capture efficacy.

\begin{proposition}[Partition invariance of coincident allocations]\label{prop:galicia-partition-invariance}
Suppose a collection of clusters has the same centre, radius, activation time and duration. Replacing their trap counts $n_1,\ldots,n_r$ by one count $n=\sum_{j=1}^rn_j$, or conversely splitting $n$ into any such partition, leaves $L_u$, $E_u$ and $a_u$ unchanged.
\end{proposition}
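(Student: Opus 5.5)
The argument rests on the linearity of the aggregate exposure $L_u$ in the trap counts, combined with the fact that $E_u$ and $a_u$ are pointwise functions of $L_u$ alone. I would first isolate the part of $L_u$ coming from the coincident clusters. Suppose clusters $k_1,\ldots,k_r$ share the centre $z$, radius $R$, activation time $\tau$ and duration $d$, with the common smoothing width $\varepsilon$. Then their activation windows coincide: by \eqref{eq:galicia-smooth-window}, $\chi_{k_j}(t;\tau_{k_j})=\chi(t;\tau)$ for every $j$, because the window depends only on $(\tau_k,d_k,\varepsilon)$. Their normalized kernels also coincide: by \eqref{eq:galicia-kernel-normalization}, $\kappa_{R_{k_j},z_{k_j}}=\kappa_{R,z}$, since $\kappa$ depends only on $(R,z)$, the domain and the fixed constant $I_{\rm ref}$. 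In particular the normalization $Z_R(z)$ is the same for all of them, so truncation by the boundary causes no discrepancy.

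Next I would split the sum in \eqref{eq:galicia-nested-exposure} into the coincident group and the remaining clusters $k\notin\{k_1,\ldots,k_r\}$. The coincident group then contributes
\[
 \sum_{j=1}^r n_{k_j}\chi(t;\tau)\kappa_{R,z}(x)=\Bigl(\sum_{j=1}^r n_{k_j}\Bigr)\chi(t;\tau)\kappa_{R,z}(x)=n\,\chi(t;\tau)\kappa_{R,z}(x).
\]
This is exactly the contribution of a single merged cluster $(\tau,z,n,R)$ with the same duration. The remaining terms are untouched by the operation, so $L_u(t,x)$ is identical at every $(t,x)$. The converse direction, splitting $n$ into a partition, is the same identity read backwards, and it holds for any partition of $n$.

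Finally, $E_u=\eta_{\max}(1-e^{-\gamma L_u})$ and $a_u=a_{\max}(1-e^{-E_u})$ are compositions of fixed scalar maps applied pointwise to $L_u$, so equality of $L_u$ immediately gives equality of $E_u$ and $a_u$. I would remark that this is precisely where the order of operations matters. If the saturation were applied clusterwise before summation, the identity would fail, because $1-e^{-\gamma(n_1+n_2)s}\neq(1-e^{-\gamma n_1 s})+(1-e^{-\gamma n_2 s})$ in general.

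There is no real obstacle here. The only care needed is to check that the kernel and window truly depend on no cluster-specific data beyond $(\tau_k,z_k,R_k,d_k)$, together with the global constants $\varepsilon$ and $I_{\rm ref}$. One should also note that the trap counts $n_k$ enter $L_u$ only linearly as multipliers, and not through the normalization.
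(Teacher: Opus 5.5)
Your proof is correct and follows essentially the same route as the paper: the coincident clusters share the factor $\chi(t;\tau)\kappa_{R,z}(x)$, so their contribution to $L_u$ depends only on $\sum_j n_j$, and $E_u$ and $a_u$ are pointwise scalar functions of $L_u$. Your extra checks are accurate elaborations of remarks the paper makes in the surrounding text: that $n_k$ enters only as a linear multiplier and not through the normalization $Z_R(z)$, and that clusterwise saturation before summation would break the identity.
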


\begin{proof}
All coincident clusters share the same factor $\chi(t;\tau)\kappa_{R,z}(x)$. Their contribution to~\eqref{eq:galicia-nested-exposure} is
\[
 \sum_{j=1}^rn_j\chi(t;\tau)\kappa_{R,z}(x)
 =\left(\sum_{j=1}^rn_j\right)\chi(t;\tau)\kappa_{R,z}(x).
\]
Hence $L_u$ is unchanged. Since $E_u$ and $a_u$ are pointwise scalar functions of $L_u$, they are unchanged as well.
\end{proof}

Proposition~\ref{prop:galicia-partition-invariance} removes an artificial numerical gain from subdividing an identical co-located budget. It does not imply uniqueness or stability of optimized placements at distinct centres.

Because no field dataset presently identifies $R_k$ as a biological parameter, the initial design considered radius scenarios
\begin{equation}\label{eq:galicia-radius-scenarios}
 R_k\in\mathcal R=\{1,2,4\}\ {\rm km}.
\end{equation}
These values correspond to support diameters of $2$, $4$, and $8$ km, while their half-maximum core diameters are approximately $1.28$, $2.56$, and $5.12$ km. They must not be described as recommended attraction diameters for single traps. The numerical screening in Section~\ref{sec:galicia-control-screening} evaluates $R=2$ and $4$ km: $R=2$ km remains mesh-sensitive on L1, whereas $R=4$ km is stable from L1 to L2 and is therefore adopted for the first optimization stage. The $1$ km scenario is deferred until a finer spatial discretization is used. The 10 km and 30 km radii appearing in the Spanish strategy delimit areas in which apiaries may be prioritized relative to previous detections; they are surveillance and deployment-selection distances, not physical capture ranges~\cite{MITECO2015}.

A resource-aware robust formulation for a later allocation stage can take the form
\begin{equation}\label{eq:galicia-cluster-objective}
 \min_{u\in\mathcal U_{\rm cl}}
 \left\{
 \sum_{s=1}^{N_S}p_s J_{\rm bio}^{(s)}(Y_u^{(s)})
 +c_C N_C+c_T\sum_{k=1}^{N_C}n_k
 \right\},
\end{equation}
subject to a total budget, accessible centres $z_k\in\Omega_{\rm adm}$, and, when appropriate, separation constraints between cluster centres. The scenarios $s$ should include the weakly identified dispersal and initial-field scales found in Section~\ref{sec:galicia-validation}. The local discrete-adjoint study reported below fixes $N_C=2$, $n_1=n_2=10$, and $R_1=R_2=4$ km and optimizes activation times and centres. Integer cluster counts and device allocations in~\eqref{eq:galicia-cluster-objective} are reserved for a later outer enumeration or mixed-integer stage. Neither the present local study nor that possible extension is a management recommendation.

\FloatBarrier
\section{Numerical screening of Galicia trapping-cluster controls}\label{sec:galicia-control-screening}

The held-out experiment in Section~\ref{sec:galicia-validation} concerns the uncontrolled state and the observation operator. Before transferring the adjoint optimizer to the Galicia domain, we performed a separate numerical screening of the cluster formulation in Section~\ref{subsec:galicia-control-clusters}. The purpose is to determine whether the discretization is sufficiently stable and which fixed scenarios are informative as initializations and constraints. The biological transition and compartment equations are left unchanged. In particular, the screening does not add an uncalibrated dependence of future-foundress production on late worker abundance.

For each controlled case, let $M_P(u)$ denote the spatial mass of the primary-phase observation operator and let $M_F(u)$ denote the end-of-cycle foundress mass. We use the normalized diagnostic
\begin{equation}\label{eq:galicia-control-Jbio}
 J_{\rm bio}(u)
 =\frac12\frac{M_P(u)}{M_P(0)}
 +\frac12\frac{M_F(u)}{M_F(0)},
\end{equation}
where the zero-control baseline shares the same mesh, time step, diffusion, initial-field treatment, and mass formulation. Thus $J_{\rm bio}=1$ for the matched uncontrolled case, and $1-J_{\rm bio}$ measures the relative improvement of this two-component numerical diagnostic. It is not yet a management cost: neither term has been calibrated against intervention outcomes, and physical deployment costs are not included.

The control-screening study contains a 59-case core battery and 28 directed post-core tests. The core battery compares consistent and lumped reaction--mass discretizations, raw and clipped--renormalized initial fields, mesh levels L1 and L2, 15, 30, and 60 subdivisions per month, effective cluster radii of 2 and 4 km, diffusion coefficients $\nu\in\{3,4,5\}\times10^6$ m$^2$/month, four allocations of 20 physical traps, phase windows, and compartment masks. Directed tests audit the masks, compare one-month activation windows, confirm the fixed-budget ranking on L2 with 60 subdivisions per month, and extend the two best allocations to 120 subdivisions per month. All 87 cases completed. The zero-control consistency and the co-located partition identities $8=4+4=2+2+2+2$ passed to machine precision.

\subsection{Positivity and discretization choices}\label{subsec:galicia-control-positivity}

The consistent-mass formulation develops small but systematic negative undershoots. On L1 with 30 subdivisions per month, the minimum state is $-1.264\times10^{-6}$ and the relative negative mass is $3.504\times10^{-4}$. On L2 the corresponding values decrease to $-5.197\times10^{-8}$ and $5.046\times10^{-7}$. Refining only in time does not remove the L1 undershoot: with 60 subdivisions per month the relative negative mass is $5.063\times10^{-4}$. By contrast, the lumped formulation gives zero negative mass in the L1 and L2 reference cases; the only nonzero high-resolution diagnostic is $1.17\times10^{-10}$, at the level of accumulated floating-point noise. The raw interpolated initial field is already nonnegative, so clipping is not responsible for the improvement. We therefore use mass lumping in the Galicia control computations and retain clipping and mass renormalization only as a defensive preprocessing option.

Table~\ref{tab:galicia-radius-mesh} compares the two effective cluster radii under the lumped formulation. For $R=2$ km, changing from L1 to L2 alters the predicted benefit by approximately $5.32\%$; the support contains only 8 L1 vertices. For $R=4$ km, the L1 and L2 values agree to $2.66\times10^{-7}$ in $J_{\rm bio}$ and the benefit change is below $0.01\%$. The $4$ km radius is therefore accepted for L1 exploration, whereas a $2$ km cluster requires at least L2 confirmation. As emphasized in Section~\ref{subsec:galicia-control-clusters}, $R=4$ km is an effective territorial support radius, not the attraction radius of one physical trap.

\begin{table}[htbp]
\centering
\caption{Mesh dependence of the normalized biological diagnostic for one reference cluster under the lumped formulation and 30 subdivisions per month.}
\label{tab:galicia-radius-mesh}
\begin{tabular}{cccc}
\toprule
$R$ (km) & $J_{\rm bio}$, L1 & $J_{\rm bio}$, L2 & Relative change of benefit \\
\midrule
2 & 0.995660612 & 0.995879626 & $5.32\%$ \\
4 & 0.993034389 & 0.993034123 & $<0.01\%$ \\
\bottomrule
\end{tabular}
\end{table}

For the accepted $R=4$ km configuration on L1, the temporal sequence is
\begin{equation}\label{eq:galicia-time-sequence-R4}
 J_{15}=0.993145559,
 \qquad
 J_{30}=0.993034389,
 \qquad
 J_{60}=0.992978322.
\end{equation}
The ratio $|J_{60}-J_{30}|/|J_{30}-J_{15}|=0.504$ is consistent with an observed first-order temporal refinement pattern. The two leading fixed-budget configurations were subsequently evaluated on L2 up to 120 subdivisions per month. Their difference ratios are $0.4986$ and $0.5001$, corresponding to estimated orders $1.004$ and $0.9997$. These tests justify using L1 with 30 subdivisions per month during optimization, L2 with 60 subdivisions per month for systematic verification, and L2 with 120 subdivisions per month only for selected final controls.

\subsection{Fixed physical budget and spatial allocation}\label{subsec:galicia-fixed-budget}

Four prescribed allocations satisfying $\sum_k n_k=20$ were compared: one cluster of 20 traps, two hotspot clusters of 10 traps, five hotspot clusters of 4 traps, and five approximately uniform clusters of 4 traps. Table~\ref{tab:galicia-fixed-budget} gives the L2 results with 60 subdivisions per month. The ordering is unchanged across L1--L2 refinement and across the three diffusion scenarios $\nu\in\{3,4,5\}\times10^6$ m$^2$/month.

\begin{table}[htbp]
\centering
\caption{Fixed-budget comparison on L2 with 60 subdivisions per month. The percentage is $100(1-J_{\rm bio})$. High-resolution values are reported for the two leading configurations.}
\label{tab:galicia-fixed-budget}
\begin{tabular}{lccc}
\toprule
Allocation & $J_{\rm bio}$, 60 & Reduction & $J_{\rm bio}$, 120 \\
\midrule
Two hotspot clusters, $2\times10$ & 0.985018263 & $1.49817\%$ & 0.984960039 \\
Five hotspot clusters, $5\times4$ & 0.987934371 & $1.20656\%$ & 0.987888584 \\
One hotspot cluster, $1\times20$ & 0.988746757 & $1.12532\%$ & --- \\
Five uniform clusters, $5\times4$ & 0.998255685 & $0.17443\%$ & --- \\
\bottomrule
\end{tabular}
\end{table}

The two-hotspot allocation is therefore the best of the four tested configurations. At 120 subdivisions per month it produces a $1.50400\%$ reduction of $J_{\rm bio}$, compared with $1.21114\%$ for five hotspot clusters. This ranking is not a proof of global optimality, because only four prescribed layouts have been compared. Moreover, equal numbers of physical traps do not imply equal integrated control after spatial smoothing, overlap, and saturation. The subsequent optimization therefore retains the physical-device budget while also reporting cluster count, integrated exposure, and deployment cost.

\subsection{Equal-duration activation windows}\label{subsec:galicia-equal-windows}

The core phase windows have different durations. To isolate timing, directed tests use one-month windows with the same locations, device count, amplitude, and nominal duration. Table~\ref{tab:galicia-equal-windows} shows a monotone loss of effectiveness as intervention is delayed. The early foundress window gives the largest reduction, followed by the primary-nest phase and the late foundress window. Late secondary-nest and future-foundress windows have little effect on the current diagnostic.

\begin{table}[htbp]
\centering
\caption{One-month timing comparison under a common spatial deployment.}
\label{tab:galicia-equal-windows}
\begin{tabular}{lcc@{\qquad}lcc}
\toprule
Window & $J_{\rm bio}$ & Reduction & Window & $J_{\rm bio}$ & Reduction \\
\midrule
Early F & 0.997476848 & $0.25232\%$ & S, month 2 & 0.999715470 & $0.02845\%$ \\
P & 0.998181031 & $0.18190\%$ & Late Q & 0.999865702 & $0.01343\%$ \\
Late F & 0.998423454 & $0.15765\%$ & S, month 3 & 0.999902447 & $0.00976\%$ \\
S, month 1 & 0.999259181 & $0.07408\%$ & S, month 4 & 0.999953175 & $0.00468\%$ \\
Early Q & 0.999411105 & $0.05889\%$ & & & \\
\bottomrule
\end{tabular}
\end{table}

The early-F reduction is approximately $5.6$ times the early-Q reduction, despite a slightly smaller integrated temporal exposure caused by truncation at the initial time. This supports prioritizing the beginning of F and the P phase in the adjoint optimization. It does not establish a universal field-season recommendation, because the calendar coefficients and the control-to-mortality conversion are not yet calibrated from trapping outcomes.

\subsection{Compartment-mask audit and preservation of the original model}\label{subsec:galicia-mask-audit}

The core battery initially produced identical values for the masks \texttt{all\_active}, \texttt{flying\_phase}, and \texttt{legacy\_all}. Directed diagnostics record the weighted discrete removal
\begin{equation}\label{eq:galicia-mask-removal}
 D_{p,i}=\sum_{n\in p}\Delta g_i^n
 \int_\Omega m_{p,i}a^n(x)y_i^n(x)\,\dd x
\end{equation}
for every phase $p$ and compartment $i$. The audit shows that the equality is structural: the masks differ only on compartments that are absent, inactive, or downstream-irrelevant during the corresponding phases. The common full masks give $J_{\rm bio}=0.993034389$, foundress-only control gives $0.994227900$, future-foundress-only control gives $0.999280059$, and worker-only control gives $0.999340642$. Worker-only control during S or Q records nonzero local removal but leaves $J_{\rm bio}=1$ under the present transfer equations and objective; worker control during P gives the full worker-only effect.

We retain the original biological model. The audit is therefore interpreted as identifying ineffective channels of the current model, rather than as evidence for inserting an uncalibrated coupling from late workers to future-foundress production. In the optimization reported below, variables associated solely with worker removal in S or Q are therefore omitted when the objective is~\eqref{eq:galicia-control-Jbio}. If a later biological study establishes such a coupling, it should be introduced and calibrated as a separate model extension, followed by a new adjoint and validation cycle.

\subsection{Numerical configuration and remaining scope}\label{subsec:galicia-control-frozen}

The screening supports the following production configuration:
\begin{equation}\label{eq:galicia-production-configuration}
 \boxed{\begin{aligned}
 &\text{lumped mass},\qquad R=4\ {\rm km},\\
 &\text{L1 with 30 subdivisions/month for optimization},\\
 &\text{L2 with 60 subdivisions/month for verification}.
\end{aligned}}
\end{equation}
The computations reported below use L2 with 120 subdivisions per month for the final time continuation and L2 with 240 subdivisions per month for the terminal fixed-control refinement. A two-hotspot allocation of ten traps per cluster provides the reproducible starting control, and the temporal search emphasizes early F and P. These choices reduce numerical ambiguity, but they do not turn $J_{\rm bio}$ into a calibrated management objective. The discrete adjoint for the lumped, saturating cluster discretization is validated in the next subsection. With $N_C=2$ and $n_1=n_2=10$ fixed, IPOPT then optimizes one-month activation times and local centre offsets under coverage, separation, and box safeguards. A later outer allocation study may compare other integer partitions and deployment costs, and a management-oriented study must additionally propagate diffusion, observation, accessibility, and trap-effectiveness uncertainty.

\subsection{Discrete-adjoint validation for the specified Galicia residual}\label{subsec:galicia-adjoint-validation}

We next differentiate the exact L1 residual associated with~\eqref{eq:galicia-production-configuration}, rather than a continuous surrogate. The forward trajectory stores the five biological phases and the historical averages used by the transfers. Reverse accumulation transposes the lumped implicit steps, the F--P--S--Q--H event maps, and the discrete averaging operators. The mortality field retains the nested saturation law of the screening study. For a cluster centre $z_k=(x_k,y_k)$, the compact kernel is normalized after restriction to the Galicia mesh; consequently, the derivatives with respect to $x_k$ and $y_k$ include the derivative of the normalization integral. The physical gradient is ordered as
\[
 \nabla J_{\rm bio}
 =\bigl(\partial_{\tau_1}J,\partial_{\tau_2}J,
          \partial_{x_1}J,\partial_{x_2}J,
          \partial_{y_1}J,\partial_{y_2}J\bigr).
\]
Times are measured in months. For optimization and directional tests, spatial increments are nondimensionalized by $10^5$ m.

Three independent checks were used before connecting the gradient to IPOPT.\@ First, direct differentiation of the smoothed activation windows and moving normalized kernels was compared with centred finite differences at the L1 mesh nodes. The maximum weighted relative error over both the nine-month and one-month scenarios was $1.47\times10^{-8}$; the integral of each normalized-kernel derivative was below $9.8\times10^{-13}$ in the FreeFEM++ runs, and the co-located partition-invariance error was zero. Second, an algebraic calendar audit compared the transposed biological transfers with an independently propagated directional perturbation, giving a relative difference of $2.78\times10^{-17}$ and a best finite-difference error below $4\times10^{-12}$. Third, the complete PDE adjoint was compared with a separately implemented linearized state equation and with centred differences of the reduced functional.

\begin{table}[htbp]
\centering
\caption{Validation of the complete discrete adjoint on the Galicia L1 configuration. D9 reproduces the nine-month screening $2\times10$ case; D1 uses equal one-month campaigns and is the initialization adopted for optimization.}
\label{tab:galicia-adjoint-validation}
\scriptsize
\begin{tabular}{@{}lccccc@{}}
\toprule
Scenario & Duration & $(\tau_1,\tau_2)$ & $J_{\rm bio}$ & \shortstack{adjoint--\\linearized} & \shortstack{best\\adjoint--FD} \\
\midrule
D9 nine-month check & 9 months & $(1,1)$ & 0.985151056 & $1.89\times10^{-16}$ & $1.89\times10^{-6}$ \\
D1 equal effort & 1 month & $(1,3)$ & 0.995303364 & $1.07\times10^{-16}$ & $7.06\times10^{-7}$ \\
\bottomrule
\end{tabular}
\end{table}

The D9 trajectory also reproduces the screening zero-control and controlled masses with relative discrepancies below $2.6\times10^{-7}$. Its reduction, $1-J_{\rm bio}=1.48489\%$, is consistent with the L1 screening value; the small difference reflects the exact trajectory reconstruction and normalization used by the adjoint driver. For D1, the primary-phase and final-foundress masses are reduced by $0.35281\%$ and $0.58652\%$, respectively, giving a combined reduction of $0.469664\%$.

At the D1 starting control, the physical gradient is
\[
\begin{aligned}
\nabla J_{\rm bio}={}&(-8.12893\times10^{-4},\ 6.15536\times10^{-4},
 -1.38809\times10^{-7},\\
&\hspace{2.2cm}-6.49429\times10^{-8},\ 4.07342\times10^{-8},\ 4.07837\times10^{-8}).
\end{aligned}
\]
Because these components have different physical units, their magnitudes must not be compared directly. After scaling activation times by one month and spatial offsets by $10^5$ m, the starting control is clearly nonstationary. The agreement among the adjoint, the independent linearized equation, and finite differences closes the numerical prerequisite for the local two-cluster optimization studied next.

\FloatBarrier
\section{Galicia two-cluster optimization and final refinement}\label{sec:galicia-ipopt-optimization}

\subsection{Optimization variables, safeguards, and hierarchy of discretizations}

The optimization fixes two clusters, ten traps per cluster, radius $R=4$ km, and one-month campaign durations. The physical control is
\[
 u=(\tau_1,\tau_2,x_1,x_2,y_1,y_2),
\]
and the optimizer uses the dimensionless coordinates
\begin{equation}\label{eq:galicia-scaled-control}
 \zeta=\left(\frac{\tau_1}{1\ {\rm month}},\frac{\tau_2}{1\ {\rm month}},
 \frac{x_1-x_1^{\rm ref}}{10^5\ {\rm m}},
 \frac{x_2-x_2^{\rm ref}}{10^5\ {\rm m}},
 \frac{y_1-y_1^{\rm ref}}{10^5\ {\rm m}},
 \frac{y_2-y_2^{\rm ref}}{10^5\ {\rm m}}\right).
\end{equation}
The spatial boxes allow local offsets of at most $5$ km from the screened hotspots. Coverage and separation are implemented as differentiable exterior penalties, not as additional hard constraints. More precisely, with $(r)_+=\max\{r,0\}$,
\begin{align}
 P_{\rm cov}(u)&=\frac{\lambda_c}{2}\sum_{k=1}^{2}
 \bigl(c_{\min}-c_R(z_k)\bigr)_+^2,\label{eq:galicia-coverage-penalty}\\
 P_{\rm sep}(u)&=\frac{\lambda_s}{2}
 \bigl(d_{\min}-d_\epsilon(z_1,z_2)\bigr)_+^2,\label{eq:galicia-separation-penalty}
\end{align}
where $c_{\min}=0.85$, $d_{\min}=30$ km, $\lambda_c=1$, $\lambda_s=10^{-8}\ {\rm m}^{-2}$, and $d_\epsilon(z_1,z_2)=\sqrt{|z_1-z_2|^2+\epsilon_d^2}$ with $\epsilon_d^2=10^{-12}\ {\rm m}^2$. Since $\frac{\dd}{\dd r}\frac12(r_+)^2=r_+$, their nonzero centre derivatives are
\begin{align}
 \nabla_{z_k}P_{\rm cov}(u)
 &=-\lambda_c\bigl(c_{\min}-c_R(z_k)\bigr)_+\nabla c_R(z_k),
 \label{eq:galicia-coverage-penalty-gradient}\\
 \nabla_{z_1}P_{\rm sep}(u)
 &=-\lambda_s\bigl(d_{\min}-d_\epsilon(z_1,z_2)\bigr)_+
 \frac{z_1-z_2}{d_\epsilon(z_1,z_2)},
 \label{eq:galicia-separation-penalty-gradient-z1}\\
 \nabla_{z_2}P_{\rm sep}(u)
 &=-\nabla_{z_1}P_{\rm sep}(u).
 \label{eq:galicia-separation-penalty-gradient-z2}
\end{align}
The time derivatives of both penalties are zero. The squared positive-part function is $C^1$ but not $C^2$ at the activation threshold. This regularity is sufficient for the first-order identities and projected-stationarity diagnostics used here; no second-order convergence theorem for IPOPT is invoked. The negligible distance regularization only avoids division by zero in the derivative. The optimizer minimizes
\begin{equation}\label{eq:galicia-penalized-objective}
 \widehat J(u)=J_{\rm bio}(u)+P_{\rm cov}(u)+P_{\rm sep}(u)
\end{equation}
subject only to the stated box bounds. Therefore the projection residual of Section~\ref{subsec:projected-gradient-residual} is the correct stationarity diagnostic for the actual optimized problem. At the reported accepted controls the inequalities are strict, both penalties and their gradients vanish, and consequently $\nabla\widehat J=\nabla J_{\rm bio}$. IPOPT is applied first on L1 with $30$ subdivisions per month, then continued on L2 with $60$ subdivisions per month. Selected final evaluations use L2 with $120$ and $240$ subdivisions per month.

\subsection{L1 time-only and timing--placement optimization}

Starting from the D1 equal-effort control, IPOPT terminates after eight iterations for the time-only problem at
\[
 (\tau_1,\tau_2)=(1.032911558,\ 2.748750898)\ {\rm months},
\]
with $J_{\rm bio}=0.995248260191$ and an active-variable projected residual $2.76\times10^{-9}$. Allowing the two centres to move produces two independent timing--placement trajectories. One starts from the reference continuation and the other from a displaced initialization followed by a saved-point continuation. Their best controls have an objective gap $2.84\times10^{-9}$; the first centres differ by $0.13$ m and the second by $0.46$ m. Thus the two trajectories terminate at numerically indistinguishable points in the same apparent L1 basin; this does not prove uniqueness of a local minimizer.

\begin{table}[htbp]
\centering
\small
\caption{L1 Galicia optimization results. Residuals are dimensionless projected residuals in the active optimizer coordinates.}
\label{tab:galicia-l1-ipopt}
\begin{tabular}{lccc}
\toprule
Control & $J_{\rm bio}$ & Reduction $100(1-J_{\rm bio})$ & Projected residual \\
\midrule
D1 reference & 0.995303363809 & $0.469664\%$ & nonstationary \\
Time-only solution & 0.995248260191 & $0.475174\%$ & $2.76\times10^{-9}$ \\
Timing--placement A & 0.995130729047 & $0.486927\%$ & $3.47\times10^{-6}$ \\
Timing--placement B & 0.995130726209 & $0.486927\%$ & $5.65\times10^{-6}$ \\
\bottomrule
\end{tabular}
\end{table}

\subsection{L2 directional check and projected-residual continuation}

When recomputed on L2 with $60$ subdivisions per month, the two L1 controls remain nearly indistinguishable:
\[
 J_A^{60}=0.995126431876,\qquad
 J_B^{60}=0.995126476735,
\]
with objective gap $4.49\times10^{-8}$. Both improve on the L2 time-only value $0.995210913407$, but their L2 projected residuals are approximately $2.17\times10^{-3}$. This residual is not a failure of the adjoint: an independent directional test gives
\[
 \nabla J_{\rm bio}(u_A)\cdot h=-3.375195459631\times10^{-3},
\]
and centred finite differences have relative errors $1.01\times10^{-2}$, $3.41\times10^{-3}$, and $9.27\times10^{-4}$ for perturbations $2\times10^{-3}$, $10^{-3}$, and $5\times10^{-4}$, respectively.

An L2 IPOPT continuation from $u_A$ terminates at the acceptable level after 19 iterations, 85 objective evaluations, and 21 gradient evaluations. The best observed point has
\begin{equation}\label{eq:galicia-l2-best-control}
\begin{aligned}
 \tau_1&=1.020594764, & x_1&=524176.365, & y_1&=4673868.807,\\
 \tau_2&=2.732526256, & x_2&=553750.553, & y_2&=4797018.875,
\end{aligned}
\end{equation}
where times are in months and spatial coordinates are in projected metres. It gives
\[
 J_{\rm bio}^{60}=0.995120149081,
 \qquad R^{\rm dim}=4.84\times10^{-6}.
\]
The two support coverages are $0.997205$ and $0.990873$, the centre separation is $126.65$ km, and all safeguards remain inactive.

\subsection{Final stationarity check at 120 subdivisions per month}

Evaluation of~\eqref{eq:galicia-l2-best-control} with $120$ subdivisions per month gives $J_{\rm bio}=0.995093218147$ and reveals a small residual concentrated in the first activation time. The preceding six-variable L2 continuation at 60 subdivisions required approximately $1.4689\times10^5$ s ($40.8$ h) of recorded computation. We therefore perform a short two-variable continuation in $(\tau_1,\tau_2)$ while keeping the L2 centres fixed, and then recompute all six gradient components at the resulting point. IPOPT terminates after three iterations at
\begin{equation}\label{eq:galicia-final-control}
\boxed{\begin{aligned}
 \tau_1^*&=1.016099365, & x_1^*&=524176.365, & y_1^*&=4673868.807,\\
 \tau_2^*&=2.719941224, & x_2^*&=553750.553, & y_2^*&=4797018.875.
\end{aligned}}
\end{equation}
The resulting objective and stationarity diagnostics are
\begin{equation}\label{eq:galicia-final-ndiv120}
 J_{\rm bio}^{120}(u^*)=0.995092856242,
 \qquad R_{\rm time}^{\rm dim}=5.91\times10^{-6},
 \qquad \|\nabla_{\rm space}J_{\rm bio}\|_{\infty}=2.97\times10^{-5}.
\end{equation}
The full scaled gradient infinity norm is $2.97\times10^{-5}$. Thus the fixed centres remain stationary to the selected $10^{-4}$ spatial criterion after temporal refinement, while the optimized time variables satisfy the stricter $10^{-5}$ criterion.

\subsection{Final temporal-refinement verification}

The same fixed control~\eqref{eq:galicia-final-control} is evaluated, without further optimization, at $60$, $120$, and $240$ subdivisions per month. Table~\ref{tab:galicia-final-temporal-refinement} shows a highly regular first-order refinement pattern for this scalar diagnostic.

\begin{table}[htbp]
\centering
\small
\caption{Temporal refinement of the exact same final control on the L2 mesh.}
\label{tab:galicia-final-temporal-refinement}
\begin{tabular}{ccc}
\toprule
Subdivisions/month & $J_{\rm bio}(u^*)$ & Reduction $100(1-J_{\rm bio})$ \\
\midrule
60  & 0.995120457808 & $0.487954\%$ \\
120 & 0.995092856242 & $0.490714\%$ \\
240 & 0.995079099724 & $0.492090\%$ \\
\bottomrule
\end{tabular}
\end{table}

The increments are
\[
 J^{120}-J^{60}=-2.760157\times10^{-5},\qquad
 J^{240}-J^{120}=-1.375652\times10^{-5},
\]
so that
\begin{equation}\label{eq:galicia-observed-time-order}
 \frac{|J^{240}-J^{120}|}{|J^{120}-J^{60}|}=0.498396,
 \qquad
 p_{\rm obs}=\log_2\frac{|J^{120}-J^{60}|}{|J^{240}-J^{120}|}=1.0046.
\end{equation}
For this fixed control and scalar diagnostic, the observed refinement is consistent with the intended first-order temporal scheme. Three levels do not constitute a convergence theorem, but the near-halving is a useful discretization check.

At $240$ subdivisions per month, the comparison among controls is
\begin{table}[htbp]
\centering
\small
\caption{Final L2 comparison at $240$ subdivisions per month.}
\label{tab:galicia-final-control-comparison}
\begin{tabular}{lcc}
\toprule
Control & $J_{\rm bio}^{240}$ & Reduction \\
\midrule
D1 reference & 0.995229781691 & $0.477022\%$ \\
Time-only control & 0.995172933673 & $0.482707\%$ \\
Final timing--placement control & 0.995079099724 & $0.492090\%$ \\
\bottomrule
\end{tabular}
\end{table}
The hierarchy
\[
 J_{\rm joint}^{240}<J_{\rm time}^{240}<J_{D1}^{240}
\]
is preserved. Relative to the time-only control, the joint control increases the diagnostic reduction by approximately $1.94\%$; relative to D1, the increase is approximately $3.16\%$. Both coverage values remain above $0.99$, the centre separation remains $126.65$ km, and the penalties remain zero.

\subsection{Spatial distribution at the uncontrolled population maximum}\label{subsec:galicia-controlled-maps}

To visualize the optimized scenario independently of the objective aggregation, define the total density of all state variables active in a biological phase by the sum of the corresponding finite-element fields. At the selected time below the trajectory lies in phase $S$, where the stored active variables are $y_1$ and $y_3$; hence
\begin{equation}\label{eq:galicia-total-active-density}
 y_{\rm tot}(t,x)=y_1(t,x)+y_3(t,x)
 \qquad\text{during phase }S.
\end{equation}
The snapshot time is selected without reference to the controlled solution:
\begin{equation}\label{eq:galicia-snapshot-selection}
 t^*\in\operatorname*{arg\,max}_{s_n}
 \int_\Omega y_{{\rm tot},0}(s_n,x)\,\dd x,
\end{equation}
where the subscript $0$ denotes the uncontrolled trajectory. On the L2 mesh with $240$ subdivisions per month, the first maximizing node is
\[
 t^*=4.004166667\ \text{months},
 \qquad n^*=721,
 \qquad\text{phase }S.
\]
At this same node,
\begin{align*}
 \int_\Omega y_{{\rm tot},0}(t^*,x)\,\dd x
 &=2.205122314230\times10^8,\\
 \int_\Omega y_{{\rm tot},u^*}(t^*,x)\,\dd x
 &=2.193031220288\times10^8.
\end{align*}
The instantaneous total-mass reduction is therefore
\begin{equation}\label{eq:galicia-snapshot-reduction}
 100\frac{\int_\Omega(y_{{\rm tot},0}-y_{{\rm tot},u^*})(t^*,x)\,\dd x}
 {\int_\Omega y_{{\rm tot},0}(t^*,x)\,\dd x}
 =0.548319\%.
\end{equation}
This percentage is not $100(1-J_{\rm bio})$: the former concerns one phase-dependent total-density snapshot, whereas $J_{\rm bio}$ combines the prescribed primary and final diagnostic masses.

\begin{figure}[htbp]
\centering
\includegraphics[width=\textwidth]{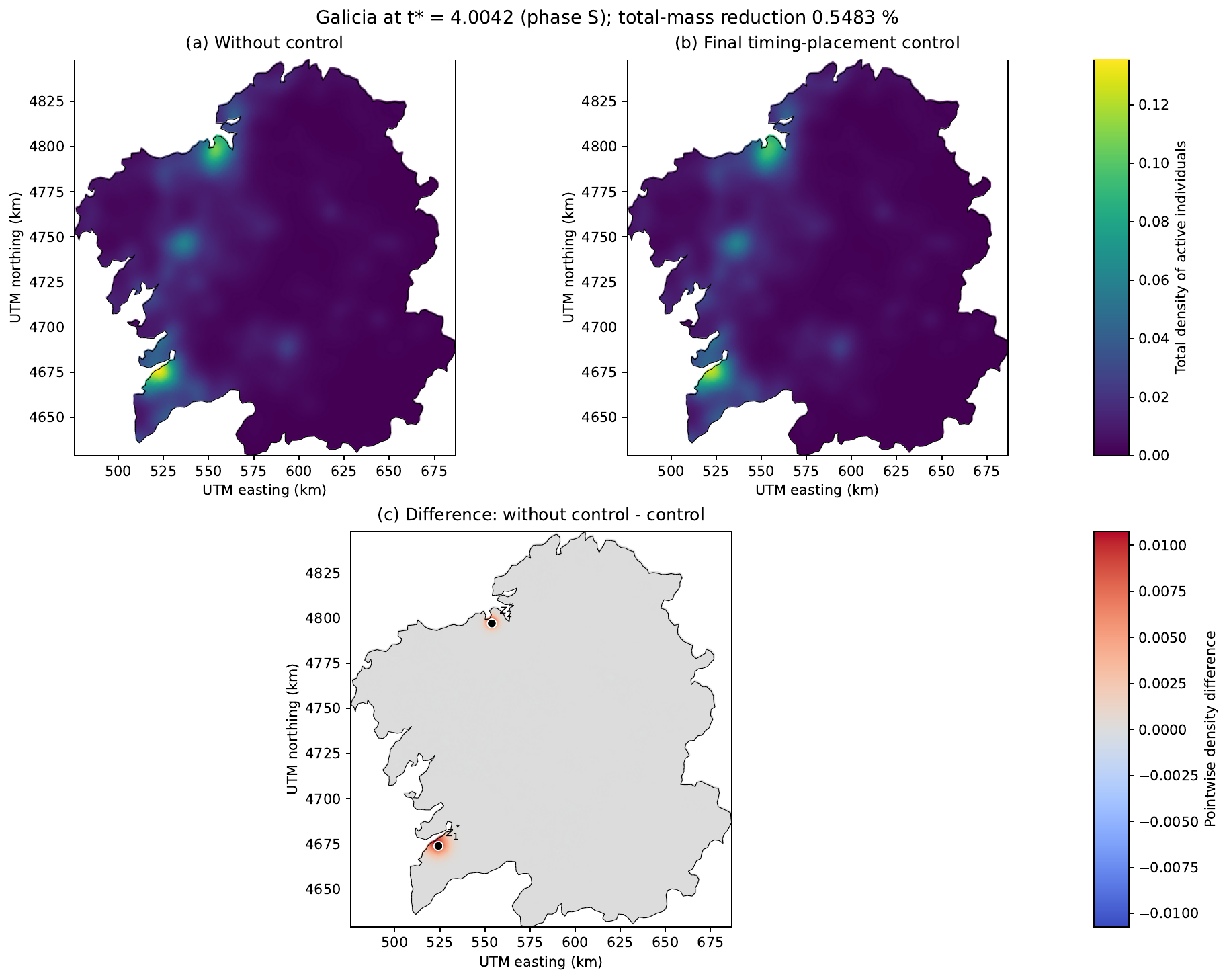}
\caption{Total density of all active individuals on the exact Galicia L2 mesh at the uncontrolled maximum $t^*=4.004166667$ months. Panels (a) and (b) use the same colour scale for the uncontrolled and controlled states. Panel (c) shows the pointwise difference without control minus controlled; positive values indicate a local reduction concentrated near the optimized trapping clusters. The black points shown in panel (c) mark the optimized cluster centres $z_1^*=(524176.365,4673868.807)$ m and $z_2^*=(553750.553,4797018.875)$ m in UTM coordinates.}
\label{fig:galicia-total-population-control-comparison}
\end{figure}

\begin{figure}[htbp]
\centering
\includegraphics[width=0.88\textwidth]{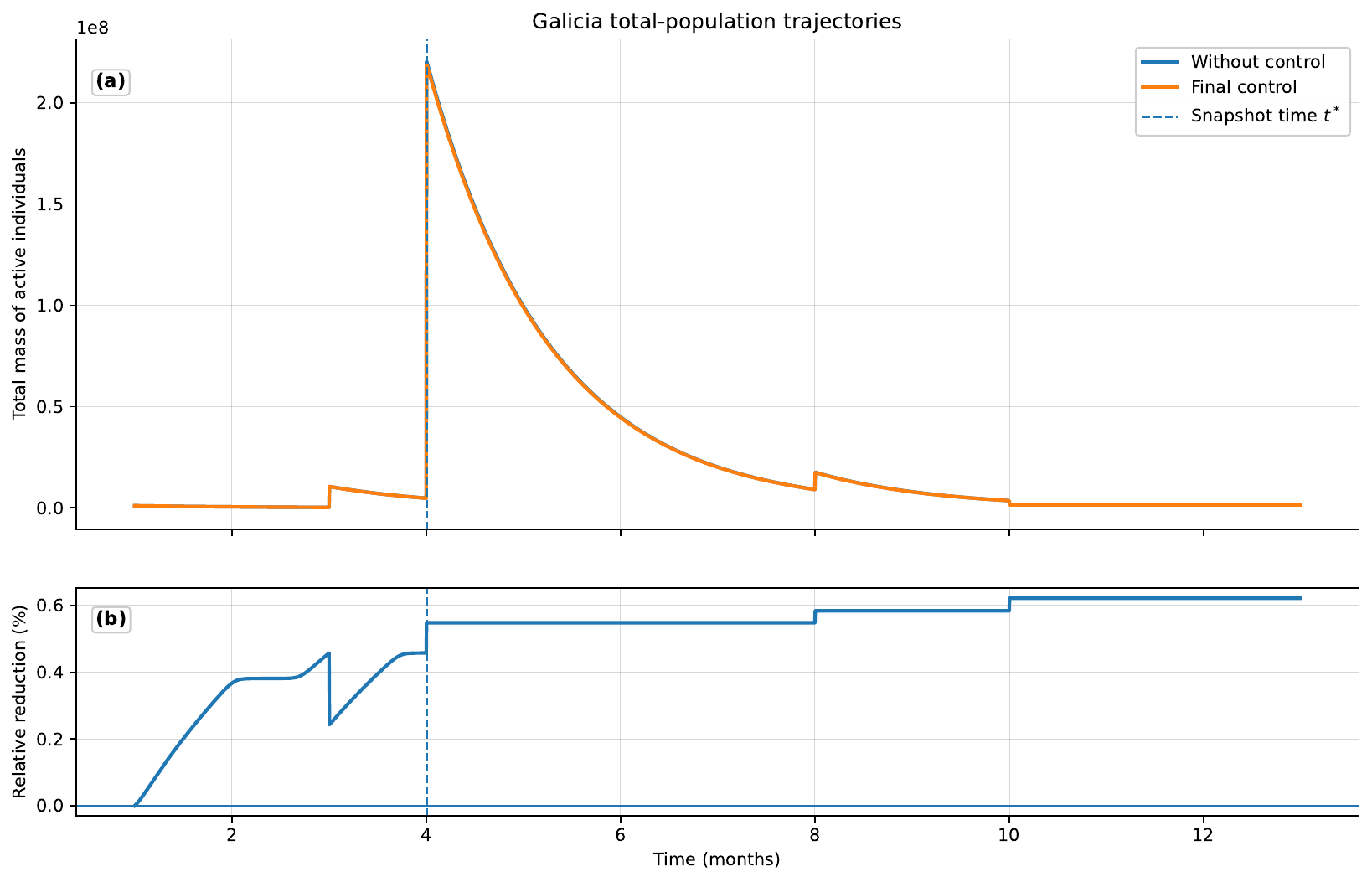}
\caption{Time trajectories of the total mass of the active compartments for the uncontrolled and final controlled states. Panel (a) shows the two mass curves, while panel (b) shows the relative reduction $100(M_0-M_u)/M_0$. The vertical dashed line identifies the snapshot time $t^*$ used in Figure~\ref{fig:galicia-total-population-control-comparison}. Discontinuous changes reflect the prescribed biological transfer events. The lower panel is included because the two mass curves are visually almost indistinguishable on their common scale.}
\label{fig:galicia-total-population-time-series}
\end{figure}

\subsection{Scope of the optimized control}

The calculations support a reproducible approximate box-stationary point of the stated penalized fully discrete scenario at 120 subdivisions per month and show that its objective value is stable under temporal refinement. They do not establish a global optimum, because convexity is not established and the field-domain search is deliberately local. Nor do they calibrate the conversion from trap count to mortality, validate $J_{\rm bio}$ as an economic or ecological management objective, or account for accessibility and deployment cost. Consequently, the absolute $0.49209\%$ reduction is conditional on the fixed intensity constants in Table~\ref{tab:galicia-control-production-constants} and must not be read as a field estimate of capture effectiveness. The principal numerical conclusion is therefore the reproducibility of the discrete-adjoint optimization and the stability of the reported diagnostic under the tested refinements, not a direct prescription of trap locations or dates.

\section{Discussion}

The academic optimization, the held-out state evaluation, and the Galicia cluster screening answer three different questions. The academic benchmark validates the discrete adjoint and the optimization workflow under controlled conditions. The held-out experiment asks whether the uncontrolled state transfers broad spatial information between annual record fields. The cluster screening asks whether a field-domain control discretization is numerically stable and which prescribed scenarios provide defensible initializations for the later optimizer. Keeping these roles separate avoids using spatial agreement to conceal numerical errors, using gradient verification as evidence of ecological validity, or interpreting a fixed-scenario comparison as an optimized management recommendation.

The multi-start experiments reveal a pronounced dependence on initialization. All 32 runs end with successful IPOPT termination, but their objective values span wide intervals for every $N_T$. This dispersion provides strong numerical evidence of a nonconvex reduced landscape, and a single terminated run is not representative. Several different starts nevertheless produce values within $5\%$ of the best result, giving some reproducibility to the lower envelope identified by the battery. The best controls favour earlier intervention than the original single-start solutions: every representative contains a trap at the lower admissible time and the remaining devices concentrate before or around $s_1$. Early removal reduces the foundress population before the first production event, and the total-mass trajectories show that this difference is propagated through the later impulsive transfers. The active lower bound should be varied in a dedicated sensitivity analysis before assigning biological significance to day 10.

The Galicia state experiment provides a second, more cautious result. Selecting the operator and parameters on 2023--2024 and freezing them for 2024--2025 yields a reproducible improvement of normalized $L^2$ error over persistence on both observational layers. The selected primary-nest operator also outperforms a matched pure-diffusion model. Since the pre-transfer foundress operator is nearly identical to diffusion after normalization, the improvement of the primary-nest operator is evidence that the calendar-dependent temporal--compartmental filter adds predictive value relative to the matched scalar-diffusion baseline. The evidence is modest rather than decisive: persistence still gives the largest correlation, centroid errors remain near $9$ km, and the model smooths away many local peaks.

The control screening provides numerical support for the field-domain numerical study. In the tested cases, mass lumping removes the negative undershoots observed with the consistent formulation, the $4$ km territorial radius is stable from L1 to L2, and the temporal refinement ratios are consistent with the intended first-order scheme. The ranking of the four fixed-budget layouts persists under mesh refinement and diffusion perturbation, with two hotspot clusters of ten physical traps giving the largest tested reduction. The high-resolution evaluations make it unlikely that this ranking is solely a time-step artefact within the tested range. The equal-duration tests also reproduce the academic preference for early intervention: early F and P are substantially more effective than late S or Q under the current coefficients.
The Galicia adjoint experiment closes the differentiation step without changing the specified numerical configuration. The complete reverse accumulation agrees with an independently implemented linearized equation at machine precision in both the nine-month and equal-effort scenarios, while centred finite differences show the expected convergence window. The moving-kernel derivatives include the normalization factor and therefore remain valid near the territorial boundary. The subsequent L1 and L2 optimization confirms that this validated gradient can be used in a realistic-domain continuation without loss of admissibility.

The mask audit clarifies an apparent anomaly without modifying the original biological model. Exact outcome coincidences arise because some masked compartments are absent or do not feed the two components of $J_{\rm bio}$ during the phases in which the masks differ. In particular, late worker removal can be nonzero while having no effect on primary-phase activity or end-of-cycle foundress mass. We regard this as a transparent structural implication of the current model. Adding a worker-to-reproduction coupling merely to force a control effect would introduce an uncalibrated mechanism. The appropriate response in the present optimizer is to omit ineffective control channels and to treat any enriched demographic coupling as a separate, data-supported model extension.

The contrast between $L^2$ error and correlation in the state experiment remains informative. A smoother field can reduce the integrated quadratic discrepancy by suppressing highly localized peaks, yet lose the relative ranking of local intensities. Reporting only one metric would therefore give an incomplete assessment. The persistence and matched-diffusion baselines are equally important: the first prevents a mechanistic model from receiving credit for information already contained in the previous year's map, while the second isolates the contribution of the compartment transfers from ordinary dispersal.

The selected state parameter pair is not a precise biological estimate. The calibration surface is shallow and the selected diffusion lies on the upper boundary of the tested grid. Radius and diffusion both control effective smoothing and are partially confounded. The reported pair should be regarded as a representative of a region of comparable spatial scales. A later study should quantify profile likelihoods or Bayesian posterior uncertainty, vary the fixed observation bandwidth, and include environmental heterogeneity or anisotropic transport if the purpose is parameter inference rather than predictive comparison.

The annual record totals cannot currently validate abundance. The linear normalization and scalar count calibration lead to identity~\eqref{eq:galicia-count-identity}, while the administrative layers display very different year-to-year count changes. An explicit observation model is required to separate population abundance from search effort, reporting, classification, and intervention practice. Until then, the normalized-field analysis is more defensible than the total-count comparison.

The spatial postprocessing provides a complementary interpretation of the optimized trajectory. At the uncontrolled maximum of the phase-dependent total active population, the controlled field has $0.5483\%$ less integrated mass. The map difference is spatially heterogeneous and is concentrated around, but not restricted to, the two optimized intervention regions because diffusion and the subsequent biological transfers propagate earlier local removals. This snapshot is descriptive and must not be confused with the aggregate diagnostic $J_{\rm bio}$ used by the optimizer.

Important control limitations remain. The conversion from a physical trap count to mortality intensity is not field calibrated; the diagnostic~\eqref{eq:galicia-control-Jbio} has no economic weight; equal physical budgets do not imply equal integrated exposure; and the local optimizer does not certify global optimality. Nevertheless, the numerical chain is now closed: two L1 trajectories terminate at nearly identical controls, the L2 directional derivative is independently verified, the L2 continuation reaches a scaled residual below $5\times10^{-6}$, the $120$-subdivision control has a full scaled gradient below $3\times10^{-5}$, and the same fixed control displays a first-order temporal refinement pattern through $240$ subdivisions per month. Accessibility, integer allocation, deployment costs, and uncertainty in the trap-to-mortality conversion remain subjects for a future management-oriented model rather than prerequisites for the numerical conclusions of the present paper.

\section{Conclusions}

We have developed a finite-dimensional timing--placement control problem for a Stieltjes-time reaction--diffusion model of \textit{Vespa velutina}. The continuous formulation identifies how the intrinsic clocks, flat intervals, and biological atoms enter the weak model, while the computational analysis is deliberately rigorous at the fully discrete level. The finite-element/Stieltjes--Euler state equation is expressed as an algebraic residual, which yields existence of a minimizer for the box-constrained discrete problem, necessary first-order conditions, linearized-state equations, and an exact discrete adjoint.

In the academic benchmark, adjoint and direct sensitivities agree to machine precision. The mean adjoint speed-up increases from approximately $7.73$ for one trap to $30.72$ for four traps. Thirty-two successfully terminated IPOPT runs produce a broad dispersion of locally converged values consistent with nonconvex behaviour: the best values found reduce the academic objective by approximately $46.6\%$, $65.4\%$, $77.0\%$, and $82.6\%$ for one to four fixed-intensity devices, but the dispersion across starts precludes a claim of global optimality.

The Galicia state experiment introduces explicit observation operators and a frozen temporal evaluation. The selected primary-nest operator improves normalized $L^2$ discrepancy over persistence by $8.1\%$ on the primary layer and $10.7\%$ on the secondary layer, and modestly improves on matched diffusion. Persistence nevertheless retains the highest correlation, centroid discrepancies remain close to $9$ km, and the available annual records do not independently identify abundance.

The Galicia control-screening study preserves the original biological transfers. Its 59-case core battery and 28 directed tests support mass lumping, a $4$ km effective cluster radius, two hotspot clusters of ten traps, L1 with $30$ subdivisions per month for exploration, and L2 for verification. The exact adjoint of the specified lumped and saturating residual is validated through local derivative tests, an independent calendar transpose, complete linearized equations, and centred differences.

The two-cluster optimization is completed on the stated discretization hierarchy. Two independent L1 timing--placement trajectories terminate at controls with objective gap $2.84\times10^{-9}$ and sub-metre differences in their centres. Re-evaluation on L2 exposes a genuine discretization-level residual, independently confirmed by a directional finite-difference test. L2 continuation reduces the dimensionless projected residual to $4.84\times10^{-6}$. A short $120$-subdivision time continuation gives the final control~\eqref{eq:galicia-final-control}, with temporal residual $5.91\times10^{-6}$ and full scaled gradient infinity norm $2.97\times10^{-5}$.

For exactly this fixed control, the objective values at $60$, $120$, and $240$ subdivisions per month are $0.995120458$, $0.995092856$, and $0.995079100$. The refinement-increment ratio is $0.4984$ and the observed order is $1.0046$. At the finest level the final joint control preserves zero penalties and gives a $0.49209\%$ diagnostic reduction, compared with $0.48271\%$ for the time-only control and $0.47702\%$ for D1. These results close the planned numerical validation campaign.

At the uncontrolled maximum of the total active population, $t^*=4.004166667$ months, the final controlled trajectory reduces the integrated active mass from $2.205122314230\times10^8$ to $2.193031220288\times10^8$, an instantaneous reduction of $0.548319\%$. The corresponding exact-mesh maps display the uncontrolled field, the controlled field, and their pointwise difference, while explicitly marking the two optimized centres. This spatial visualization is consistent with the objective-level ordering but is not itself an additional optimization criterion.

The final conclusion is intentionally limited. The computations support an approximate box-stationary point at 120 subdivisions per month and an observed first-order refinement pattern for the objective of the same fixed control through 240 subdivisions per month. They do not certify a global optimum of the continuous problem, calibrate real trap effectiveness, or provide a direct management prescription. Those questions require field-calibrated trap mortality, accessibility and cost data, and a formal uncertainty analysis.

\section*{Data and code availability}
The complete reproducibility archive associated with this study is publicly available in Zenodo as version~1.1~\cite{FernandezAreaZenodo2026}. It contains the FreeFEM++ and Python source code, processed simulation inputs, frozen reference outputs, environment and execution instructions, SHA-256 integrity manifests, and a detailed correspondence between each numerical experiment and the sections, figures, and tables of the article. The raw administrative nest records are not redistributed because they remain subject to the access conditions of the original data provider.

\section*{CRediT authorship contribution statement}
Francisco J. Fern\'andez: Conceptualization, Formal analysis, Software, Investigation, Validation, Visualization, Writing -- original draft, Writing -- review and editing. Iv\'an Area: Conceptualization, Methodology, Mathematical modelling, Validation, Writing -- review and editing.

\section*{Declaration of competing interest}
The authors declare that they have no known competing financial interests or personal relationships that could have appeared to influence the work reported in this paper.

\section*{Declaration of generative AI and AI-assisted technologies in the writing process}
During the preparation of this work, the authors used OpenAI ChatGPT to support language editing, document restructuring, LaTeX preparation and consistency checking. After using this tool, the authors reviewed and edited the content and take full responsibility for the content of the publication.

\appendix

\section{Derivatives of the trap kernels}\label{app:kernel-derivatives}

For the academic temporal bump, write $r=(t-\tau_k)/T_M$ and
\[
 \delta_{T_M}^{1D}(t-\tau_k)
 =\frac{1}{C_1T_M}e^{1/(r^2-1)}\one_{\{|r|<1\}}.
\]
Inside the support, differentiation with respect to the centre gives
\begin{equation}\label{eq:academic-temporal-bump-explicit-derivative}
 \frac{\partial}{\partial\tau_k}\delta_{T_M}^{1D}(t-\tau_k)
 =\delta_{T_M}^{1D}(t-\tau_k)
 \frac{2r}{T_M(r^2-1)^2}.
\end{equation}
The compactly supported exponential extends with all derivatives equal to zero at $|r|=1$, so~\eqref{eq:academic-temporal-bump-explicit-derivative} defines the global derivative by zero outside the support.

For the radial spatial bump, let
\[
 \rho_k^2(x)=\frac{|x-z_k|^2}{R_M^2},
 \qquad
 \delta_{R_M}^{2D}(x-z_k)
 =\frac{1}{C_2R_M^2}e^{1/(\rho_k^2-1)}\one_{\{\rho_k<1\}}.
\]
Inside the support,
\begin{align}
 \frac{\partial}{\partial x_k}\delta_{R_M}^{2D}(x-z_k)
 &=\delta_{R_M}^{2D}(x-z_k)
 \frac{2(x-x_k)}{R_M^2(1-\rho_k^2)^2},\label{eq:academic-spatial-bump-x-derivative}\\
 \frac{\partial}{\partial y_k}\delta_{R_M}^{2D}(x-z_k)
 &=\delta_{R_M}^{2D}(x-z_k)
 \frac{2(y-y_k)}{R_M^2(1-\rho_k^2)^2}.
 \label{eq:academic-spatial-bump-y-derivative}
\end{align}
The positive sign is correct because the derivative is taken with respect to the centre rather than the spatial evaluation point. Combining~\eqref{eq:academic-temporal-bump-explicit-derivative}--\eqref{eq:academic-spatial-bump-y-derivative} with~\eqref{eq:trap-control-continuous} yields the academic control derivatives in~\eqref{eq:kernel-derivative-tau}--\eqref{eq:kernel-derivative-y}.

For the Galicia computation, the raw kernel is $K_R$ from~\eqref{eq:galicia-cluster-kernel}. Inside its support, with $\rho^2=|x-z|^2/R^2$,
\begin{align}
 \partial_{z_1}K_R(x-z)
 &=K_R(x-z)\frac{2(x_1-z_1)}{R^2(1-\rho^2)^2},\\
 \partial_{z_2}K_R(x-z)
 &=K_R(x-z)\frac{2(x_2-z_2)}{R^2(1-\rho^2)^2}.
\end{align}
The normalized derivatives are not obtained by merely rescaling these expressions: the derivative of the domain integral must also be included. The exact quotient formula is~\eqref{eq:galicia-normalized-kernel-derivative}, and the nested saturation derivative is~\eqref{eq:galicia-mortality-chain-rule}. These are the formulas validated in the Galicia local and complete directional tests.

\section{Implementation checklist}

The following checklist summarizes the status of the numerical implementation after the academic validation tests:
\begin{enumerate}[label=(I\arabic*)]
    \item the implemented adjoint has been checked against the implemented linearized state operator through directional and componentwise gradient tests;
    \item the averaged biological transitions and their transpose factors are included in the academic benchmark and are part of the adjoint-gradient validation;
    \item the discrete quadratic cost is used consistently as the finite-dimensional objective optimized by the code;
    \item the finite-difference test and the timing battery are documented in Section~\ref{sec:academic-validation};
    \item the eight-start IPOPT battery for each \(N_T=1,2,3,4\), including the ten completed restarts, is reported in Section~\ref{sec:numerical-optimization};
    \item the driver exports the physical-coordinate projected-box diagnostic and its normalized form for every multi-start run;
    \item the academic numerical section includes the full multi-start dispersion, the best controls, and independently recomputed total-mass trajectories;
    \item the Galicia experiment fixes the observed kernel bandwidth, selects operator and parameters on 2023--2024 only, and evaluates the frozen choice on 2024--2025 against persistence and matched diffusion;
    \item annual count predictions are reported only as a structural diagnostic because the linear normalization makes them non-identifying for the spatial PDE;
    \item the 59-case Galicia control core battery passes zero-control consistency and co-located partition-invariance checks;
    \item mass lumping is used for the Galicia control stage because the consistent formulation produces measurable negative undershoots, while the lumped formulation is nonnegative up to floating-point noise;
    \item the $4$ km territorial cluster radius is accepted for L1 exploration, whereas the $2$ km radius requires L2 resolution;
    \item first-order temporal refinement behaviour is observed in selected L2 runs up to 120 subdivisions per month;
    \item the mask audit explains exact outcome coincidences without changing the original biological transfer model;
    \item the Galicia discrete adjoint, moving normalized-kernel derivatives, calendar transpose, and independent linearized directional equation have been validated before IPOPT;
    \item two independent L1 timing--placement trajectories terminate at numerically indistinguishable controls in the same apparent basin, and the L2 continuation reduces the dimensionless projected residual below $5\times10^{-6}$;
    \item the final control is approximately box-stationary at 120 subdivisions per month and exhibits an observed first-order refinement pattern, for the scalar objective, through 240 subdivisions per month;
    \item the final postprocessing compares uncontrolled and controlled total active-density fields at an objectively selected uncontrolled maximum, using a common state scale, a separate difference scale, and explicit black markers for both optimized centres;
    \item the Galicia optimizer uses L1 for exploration and L2 for continuation, with two ten-trap clusters, one-month durations, scaled local centre offsets, and separately reported coverage and separation safeguards;
    \item L2 with 60 subdivisions per month is reserved for verification of the best L1 multi-start controls, with selected 120-subdivision confirmation runs.
\end{enumerate}

\end{document}